\documentclass[a4paper]{article}
\usepackage[margin=30mm]{geometry}
\usepackage{amsmath}
\usepackage{amssymb}
\usepackage{amsthm}
\usepackage{braket}
\usepackage{titlesec}
\usepackage{titlefoot}
\usepackage{color}
\usepackage{comment}
\usepackage{bm}

\titleformat*{\section}{\normalsize\bfseries}
\titleformat*{\subsection}{\normalsize\bfseries}
\allowdisplaybreaks
\def\e{{\varepsilon}}        
\def\p{\partial}

\newtheorem{thm}{Theorem}[section]
\newtheorem{lem}[thm]{Lemma}
\newtheorem{dfn}[thm]{Definition}
\newtheorem{cor}[thm]{Corollary}
\newtheorem{prop}[thm]{Proposition}
\newtheorem{rem}[thm]{Remark}

\newcommand{\pt}{\partial}           
\newcommand{\R}{\mathbb R}

\newcommand{\Z}{\mathbb Z}
\newcommand{\N}{\nabla}

\newcommand{\lam}{\lambda}

\newcommand{\dl}{\delta}

\newcommand{\s}{\sigma}

\renewcommand{\t}{\tau}

\newcommand{\hr}{\hookrightarrow}
\newcommand{\wt}{\widetilde }
\newcommand{\wh}{\widehat }
\renewcommand{\div}{{\rm{div\,}}}
\def\<{\langle}
\def\>{\rangle}

\newcommand{\dB}{\dot{B}}

\newcommand{\fB}{\widehat{\dot{B}}}

\newcommand{\ve}{\varepsilon}

\newcommand{\al}{\alpha}
\newcommand{\supp}{\text{supp\;}}

\newcommand{\les}{\lesssim\;}

\newcommand{\F}{\mathcal{F}}

\newcommand{\rd}{\color{red}}
\makeatletter
\@addtoreset{equation}{section}
 
  \makeatother
 
\begin{document}

\title{
\vspace{-1cm}
\large{\bf Asymptotic Profiles for Solutions to the Incompressible \\
Navier--Stokes Equations in the Critical Fourier--Herz Spaces}}
\author{Ikki Fukuda and Ryosuke Nakasato\\ [.7em]
Faculty of Engineering, Shinshu University
}
\date{}
\maketitle

\footnote[0]{2020 Mathematics Subject Classification: 35B40, 35Q30.}

\vspace{-0.75cm}
\begin{abstract}
We consider the large time asymptotic behavior of solutions to the initial value problem for the incompressible Navier--Stokes equations in the whole space $\mathbb{R}^d$ ($d\ge 2$). When the initial velocity belongs to $L^1(\mathbb{R}^d)$, it is well-known that its spatial integral vanishes as a consequence of the divergence-free compatibility condition. Taking this property into account, Carpio (1996) and Fujigaki--Miyakawa (2001) derived higher-order asymptotic formulas for the strong solutions.
In this paper, we investigate the large time behavior of solutions when the initial velocity belongs to the Fourier--Herz space $\widehat{L}^1(\mathbb{R}^d)$, which provides a broader framework than $L^1(\mathbb{R}^d)$ and imposes an additional condition in the low-frequency region. In particular, in the two-dimensional case, we show that an asymptotic profile different from those obtained by Carpio and Fujigaki--Miyakawa arises.
\end{abstract}

\medskip
\noindent
{\bf Keywords:} 
Navier--Stokes equations; asymptotic profiles; critical Fourier--Herz spaces. 

\section{Introduction}  

\indent

We consider the initial value problem of the following incompressible Navier--Stokes equations in the $d$-dimensional Euclidean space $\R^{d}$ ($d \ge 2$):  
\begin{equation} \label{eqn;NS} 
\left\{
\begin{split}
&\pt_t u - \mu \Delta u + \div (u \otimes u) + \N p= 0, 
&x \in \R^d, \ t>0, \\
&\div u = 0, &x \in \R^d, \ t>0,\\
&u|_{t=0}=u_0, &x \in \R^d,  
\end{split}
\right.
\end{equation}
where $u=u(x,t): \R^d \times \R_+ \to \R^d$ and $p=p(x, t): \R^d \times \R_+ \to \R$ 
denote the unknown velocity of the fluid and the unknown pressure of the fluid, respectively, while $u_{0}=u_{0}(x):\R^d \to \R^d$ is the given initial velocity. 
Also, $\mu>0$ means the kinematic viscosity and the tensor product $u\otimes u$ is defined by $d$-dimensional square matrix $(u_{i}u_{j})_{i, j}$. 
The purpose of our study is to investigate the large time asymptotic behavior of the solutions to \eqref{eqn;NS}. 
In particular, we aim to obtain the first and second terms of the asymptotic expansion in the Fourier--Herz spaces defined in Definition 1.1. 

First of all, let us introduce some known results related to the global well-posedness for the problem \eqref{eqn;NS}. 
The following argument on the scaling invariances introduced by Fujita--Kato 
\cite{FK64} is the one of the significant approach expected to solve this problem: 
If $(u,p)$ is a solution to the problem \eqref{eqn;NS}, 
the pair of scaled functions $(u_\lam, p_\lam)$ given by 
\begin{equation} \label{eqn;sc}
\left\{
\begin{aligned}
u_\lam(t,x)&=\lam u(\lam^2 t, \lam x), \\
p_\lam(t,x)&=\lam^2 p(\lam^2 t,\lam x),
\end{aligned}
\right. \quad \lam > 0
\end{equation}
also satisfy the same equations. One can find that the invariant function class under the scaling \eqref{eqn;sc} is given by {\it the critical space}, for instance, the critical Bochner--Sobolev spaces are 
\begin{equation*}
  L^r(\R_+;\dot{H}_p^s(\R^d)) \quad {\rm if} \quad \frac{2}{r}+\frac{d}{p}=1+s.  
\end{equation*}

The well-posedness theory for the initial value problem of \eqref{eqn;NS} in critical spaces has been developed progressively from critical Sobolev spaces to critical Lebesgue and Besov spaces. In the critical Sobolev space $\dot{H}^{-1+d/2}(\mathbb{R}^d)$, Fujita--Kato \cite{FK64} established the local well-posedness of \eqref{eqn;NS} and the global well-posedness for sufficiently small initial data. Kato \cite{K84} subsequently extended this result to the critical Lebesgue space $L^d(\mathbb{R}^d)$.
The theory was further developed in the framework of homogeneous Besov spaces $\dot{B}^{-1+d/p}_{p,\infty}(\mathbb{R}^d)$ for $d<p<\infty$; see, for example, \S 5 of \cite{BCD11} for a review. At the endpoint of this scale, Koch--Tataru \cite{KT01} established the global well-posedness for sufficiently small initial data in the critical space $BMO^{-1}(\mathbb{R}^d)$.
On the other hand, the endpoint critical Besov spaces $\dot{B}^{-1}_{\infty,\s}(\mathbb{R}^d)$ exhibit a strikingly different behavior. Bourgain--Pavlovi\'c \cite{BP08}, Yoneda \cite{Y10}, and Wang \cite{W15} established the ill-posedness of \eqref{eqn;NS} in $\dot{B}^{-1}_{\infty,\s}(\mathbb{R}^d)$ for all $1\leq \s\leq\infty$. Thus, while well-posedness extends to the endpoint space $BMO^{-1}(\R^d)$, the corresponding endpoint Besov spaces $\dot{B}^{-1}_{\infty,\s}(\R^d)$ exhibit ill-posedness.

As an alternative approach, Giga--Inui--Mahalov--Saal \cite{GIMS08} and Giga--Saal \cite{GS11} studied the problem \eqref{eqn;NS} with the Coriolis force in the framework of non-decaying initial velocities. They established uniform global solvability in the scaling-invariant space $\F M_{0,\s}^{-1}(\mathbb{R}^d)$.
Several years later, Lei--Lin \cite{LL11} obtained corresponding global well-posedness results in the Lei--Lin space:
$$
\mathcal{X}^{-1}(\mathbb{R}^d)
:=\left\{f\in\mathcal{S}'(\R^d) : \widehat{f} \in L^1_{loc}(\R^d), \,|\cdot|^{-1}\widehat{f}\in L^1_\xi(\mathbb{R}^d)\right\}.
$$
Notice that $\mathcal{X}^{-1}(\mathbb{R}^d)$ is equivalent to $\widehat{\dot{B}}{^{-1}_{\infty,1}}(\mathbb{R}^d)$ in the sense of Lemma \ref{lem;equi} mentioned below.
Subsequently, Cannone--Wu \cite{Ca-Wu} and Iwabuchi--Takada \cite{IT14} further extended the result of Lei--Lin by establishing the global well-posedness of \eqref{eqn;NS} in the critical Fourier--Herz spaces $\widehat{\dot{B}}{^{-1}_{\infty,\s}}(\mathbb{R}^d)$ for all $1\leq \s\leq2$ (see Definition 1.2 below). Moreover, Iwabuchi--Takada \cite{IT14} showed that this range is optimal in the sense that, for all $2<\s\leq\infty$, the problem \eqref{eqn;NS} is ill-posed in $\widehat{\dot{B}}{^{-1}_{\infty,\s}}(\mathbb{R}^d)$.

The large time behavior of solutions to \eqref{eqn;NS} has been extensively studied up to the present. 
In \cite{K84}, Kato established time-decay estimates in $L^q(\mathbb{R}^d)$ $(q>d)$ for the strong solutions to \eqref{eqn;NS} constructed therein, assuming that the initial velocity belongs to $L^d(\mathbb{R}^d)$.
The energy decay of Leray--Hopf weak solutions was first established by Masuda \cite{M84}. By introducing the Fourier splitting method, Schonbek \cite{S80} obtained algebraic decay rates for weak solutions to \eqref{eqn;NS} with large initial data in $L^1(\mathbb{R}^d)\cap L^2(\mathbb{R}^d)$ for $d\geq3$. Subsequently, optimal decay rates were established by Schonbek \cite{S85} and Kajikiya--Miyakawa \cite{KM86}.
In the case where $\langle x\rangle u_0\in L^1(\mathbb{R}^d)$, Wiegner \cite{W87} established an enhanced decay estimate
$
\|u(t)\|_{L^2}=O(t^{-d/4-1/2})
$
for all $d \ge 2$. 
The optimality of Wiegner's decay rate was subsequently established by Miyakawa--Schonbek \cite{MS01}. 
For a more detailed analysis, we refer the reader to \cite{B01, B04, BS18}.

Regarding the asymptotic profiles of solutions to \eqref{eqn;NS}, Carpio \cite{C96} showed that the strong solution to \eqref{eqn;NS} admits the following asymptotic expansion in terms of spatial derivatives of the heat kernel
$G_\mu(t)=G_\mu(x, t):=(4\pi\mu t)^{-d/2}e^{-|x|^2/4\mu t}: $
\begin{equation} \label{eqn;AP}
\begin{aligned}
u(t)&=G_\mu(t)\int_{\mathbb{R}^d}u_0(y)dy
-\partial_kG_\mu(t)\int_{\mathbb{R}^d}y_k u_0(y)dy \\
&\ \ \ -\partial_kS_\mu(t)
\int_0^\infty\int_{\mathbb{R}^d}
(u_k u)(y,\tau)dyd\tau
+o\left(t^{-\frac{d}{2}(1-\frac{1}{p})-\frac{1}{2}}\right) \quad (t \to \infty)
\end{aligned}
\end{equation}
in $L^p(\R^d)$, 
under the assumptions that $d\geq3$, $u_0\in L^d(\mathbb{R}^d)$ satisfies $\operatorname{div}u_0=0$, $\|u_0\|_{L^d}\ll1$, and $\langle x\rangle u_0\in L^1(\mathbb{R}^d)$, together with some additional technical assumptions. 
Here, $S_\mu(t)=S_\mu(x,t)$ denotes the Oseen kernel associated with the Stokes semigroup, defined by 
\begin{align}
S_\mu(x,t)
:=\mathcal{F}^{-1}\left[e^{-\mu t|\xi|^2}P(\xi)\right](x),
\qquad
P(\xi)
:=\left(\delta_{lm}-\frac{\xi_l\xi_m}{|\xi|^2}\right)_{1\leq l,m\leq d}.
\label{DEF-Stokes}
\end{align}
We note that, although the profile functions on the right-hand side of \eqref{eqn;AP} are linear, the effect of the nonlinear term appears in the amplitude of the third term.
On the other hand, when $d=2$, Carpio \cite{C96} also obtained the asymptotic expansion
\begin{equation}\label{eqn;AP2}
u(t)=G_\mu(t)M
-(\log t)\,\partial_kS_\mu(t)\frac{M^kM}{2}
+o\left(t^{-\frac{d}{2}(1-\frac{1}{p})-\frac{1}{2}}\log t\right)
\quad (t\to\infty)
\end{equation}
in $L^p(\mathbb{R}^d)$, assuming that $u_0\in L^2(\mathbb{R}^2)$ satisfies $\operatorname{div}u_0=0$ and $\langle x\rangle u_0\in L^1(\mathbb{R}^2)$, together with some additional technical assumptions. For convenience, in \eqref{eqn;AP2}, we denote the spatial integral of the initial velocity by 
$$
M={}^{\rm t}(M^1,\dots,M^d)
:={}^{\rm t}\left(
\int_{\mathbb{R}^d}u_0^1(y)dy,\dots,
\int_{\mathbb{R}^d}u_0^d(y)dy
\right).
$$
Later on, Fujigaki--Miyakawa \cite{FM01} showed that the asymptotic formula \eqref{eqn;AP} holds for all $d\ge 2$. 
They also observed that $M=0$ whenever $\div u_0=0$ and $u_0\in L^1(\mathbb{R}^d)$ (cf. Miyakawa \cite{M98}), 
so that the first term on the right-hand side of \eqref{eqn;AP} vanishes. 
Here, we also note that the two-dimensional formula \eqref{eqn;AP2} obtained by Carpio \cite{C96} formally contains a logarithmic correction $\log t$ in its second term. However, under the assumptions considered there, one has $M=0$, and hence both the first and second terms on the right-hand side of \eqref{eqn;AP2} vanish identically. Thus, in this setting, \eqref{eqn;AP2} does not provide a nontrivial asymptotic profile in the corresponding order. On the other hand, the result of Fujigaki--Miyakawa \cite{FM01} gives a more refined asymptotic description that remains nontrivial even when $M=0$. For an analogous result in the Fourier--Herz spaces, see also \cite{N26}.

In the aforementioned studies, spatial decay of the initial velocity expressed through assumptions such as $u_0\in L^1(\mathbb{R}^d)$, plays an essential role in the derivation of asymptotic profiles. The purpose of this paper is to investigate the derivation of asymptotic profiles within the framework of Fourier--Herz spaces, without imposing such spatial decay assumptions on the initial velocity.

\medskip
\par\noindent
\textbf{\underline{Basic Notations and Definitions.}} 
\indent

\smallskip
Before stating the main results of this paper, we introduce some 
notations and definitions. 
For $d \ge 1$ and $1\le p\le\infty$, let $L^p = L^p(\R^d)$ be the Lebesgue spaces. 
For any $f$ belonging to the Schwartz class $\mathcal{S}=\mathcal{S}(\R^d)$, 
the Fourier transform of $f$ denoted by $\wh{f}=\wh{f}(\xi)$ or $\mathcal{F}[f]=\mathcal{F}[f](\xi)$ is  
\[
\wh{f}(\xi)= \mathcal{F}[f](\xi):= \frac{1}{(2 \pi)^{\frac{d}{2}}} \int_{\R^d} e^{-i x \cdot \xi}f(x) dx.             
\]
Similarly, for any $g\in \mathcal{S}(\R^d_\xi)$, the Fourier inverse transform $\mathcal{F}^{-1}[g]=\mathcal{F}^{-1}[g](x)$ is defined by 
\[
 \mathcal{F}^{-1}[g](x) := \frac{1}{(2 \pi)^{\frac{d}{2}}} \int_{\R^d} e^{i x \cdot \xi} g(\xi) d\xi.             
\]
Let $\{ \phi_j \}_{j \in \Z}$ be the Littlewood--Paley 
dyadic decomposition of unity, i.e., for a non-negative radially 
symmetric function $\phi \in \mathcal{S}(\mathbb{R}^d)$, we set (for the construction 
of $\{ \phi_j \}_{j \in \Z}$, see e.g. \cite{BCD11, S93}) 
\begin{equation*}
\wh{\phi_j}(\xi) := \wh{\phi}(2^{-j}\xi) \ (j \in \Z), \ \ \sum_{j \in \Z}\wh{\phi}_j(\xi) = 1 \ (\xi \neq 0) \ \ \text{and} \ \ 
\supp \wh{\phi}
\subset
\left\{\xi \in \R^{d} : \frac{1}{2} \le |\xi| \le 2\right\}. 
\end{equation*}

In what follows, we would like to introduce the rigorous definitions of the Fourier--Herz spaces. First, let us define the Fourier--Lebesgue spaces $\wh{L}^p=\wh{L}^p(\R^d)$ as follows: 
\[
  \wh{L}^p(\R^d):=\left\{f \in \mathcal{S}'(\mathbb{R}^d) : \wh{f} \in L^1_{loc}(\R^d),\|f\|_{\wh{L}^p} <\infty\right\} \quad {\rm with} \quad \|f\|_{\wh{L}^p}:=\|\wh{f}\|_{L^{p'}},  
\]
where $\mathcal{S}'=\mathcal{S}'(\R^d)$ is the space of tempered distributions. Here and after, $p'$ denotes the exponent of H\"older conjugate, namely $\frac{1}{p}+\frac{1}{p'}=1$. Next, we shall introduce the Fourier--Herz spaces. 
The following definition is inspired by Gr\"{u}nrock \cite{G04}. 
\begin{dfn}[Homogeneous Fourier--Herz Spaces]
Let $d \ge 1$ and $s \in \R$, $1 \le p,\s \le \infty$. 
Then, we define the homogeneous Fourier--Herz spaces $\fB{_{p,\s}^s} = \fB{_{p,\s}^s}(\R^d)$ as follows:
\begin{align*}
  \fB{_{p,\s}^s}(\R^d)
  :=\left\{f\in\mathcal{S}'(\mathbb{R}^d) : \widehat{f}\in L^1_{loc}(\R^d), \,\|f\|_{\fB{_{p,\s}^s}} < \infty\right\}, \ \ 
  \|f\|_{\fB{_{p,\s}^s}}:=\left\|\left\{2^{sj}\|\dot{\Delta}_j f\|_{\widehat{L}^{p}}\right\}_{j\in\Z}\right\|_{\ell^\s}, 
\end{align*}  
where $\dot{\Delta}_jf:=\mathcal{F}^{-1}[\widehat{\phi}_j \widehat{f}]$ for some 
$f \in \mathcal{S}'(\mathbb{R}^d)$. 
\end{dfn}
\begin{rem}[]
{\rm 
Let $s \in \R$, $d \ge 1$ and $1 \le \s \le \infty$. Then, by virtue of Plancherel's theorem, we can see that  
the spaces $\fB{_{2,\s}^s}$ and $\dB_{2,\s}^s$ coincide in the sence of the norm equivalence, 
where and in what follows, $\dB_{2,\s}^s=\dB_{2,\s}^s(\R^d)$ means the homogeneous Besov spaces defined by 
\[
  \dB^s_{2,\s}(\R^d):=\left\{f \in \mathcal{S}'(\mathbb{R}^d) : \wh{f}\in L^1_{loc}(\R^d), \,\|f\|_{\dB^s_{2,\s}}<\infty\right\}, 
  \ \  
  \|f\|_{\dB^s_{2,\s}}:=\left\|\left\{2^{sj}\|\dot{\Delta}_j f\|_{L^2}\right\}_{j\in\Z}\right\|_{\ell^\s}.  
\]
}
\end{rem}

In addition to the above, the following notations are also used throughout this paper:
\begin{itemize}
\item 
Let $X$ be a Banach space, $I \subset \R$ be an interval and $1\le r \le \infty$. 
One denotes by the Bochner spaces $L^r(I;X)$ the set of strongly measurable functions $f:I\to X$ such that $t \mapsto \|f(t)\|_X$ 
belongs to $L^r(I)$. For $f \in L^r(I;X)$, one defines $\|f\|_{L^r(I;X)}:=\|\|f\|_{X}\|_{L^r(I)}$. 
\item
For $1 \le q<\infty$, $\ell^q(\Z)$ denotes the set of all sequences $\{a_j\}_{j \in \Z}\subset \R$ such that $\sum_{j \in \Z}|a_j|^q<\infty$. 
In the case of $q=\infty$, $\ell^\infty(\Z)$ means the set of all bounded sequences of real numbers. 
\item
For $s \in \R$ and $f \in \mathcal{S}'(\mathbb{R}^d)$, the operators $|\N|^s$ and $\<\N\>^s$ are designating the Riesz potential and the Bessel potential defined by $|\N|^s f:=\mathcal{F}^{-1}[|\xi|^s \wh{f}\,]$ and $\<\N\>^s f:=\mathcal{F}^{-1}[\<\xi\>^s \wh{f}]$ with $\<\xi\>^s:=(1+|\xi|^2)^{\frac{s}{2}}$, respectively. 
In addition, the heat semi-group $e^{\mu t\Delta}$ is defined by $e^{\mu t\Delta}f:=\mathcal{F}^{-1}[e^{-\mu t|\xi|^{2}}\wh{f}]$. Similarly, $e^{\sqrt{\mu t}|\N|}f$ can be defined by $e^{\sqrt{\mu t}|\N|}f:=\mathcal{F}^{-1}[e^{\sqrt{\mu t}|\xi|} \widehat{f}]$. 
\item
Throughout this paper, for the functions $f(t)$ and $g(t)$, $f(t)\les g(t)$ means that there exists a positive constant $C>0$ independent of $t>0$ such that the inequality $f(t)\le Cg(t)$ holds. 
\item
Throughout this paper, we employ the Einstein summation convention with respect to repeated indices. 
Namely, whenever an index appears twice in a term, summation over that index from $1$ to $d$ is implicitly understood. More precisely, we write $A_kB_k
:= \sum_{k=1}^d A_kB_{k}$. 
\end{itemize}

\medskip
\par\noindent
\textbf{\underline{Fundamental Results.}} 
\indent

\smallskip
At the end of this section, let us explain some fundamental results that will be used in stating our main results. We first introduce the definition of the solutions used in this study.

\begin{dfn}[Global Mild Solution] 
If the function $u \in L^2(\R_+;\wh{L}^\infty)$ satisfies the following integral equations, 
then we call $u$ the {\it global mild solution} of the problem \eqref{eqn;NS}:  
\begin{equation}\label{IE}
u(t)=e^{\mu t \Delta}u_0-\int_0^t e^{\mu(t-\t)\Delta} \mathcal{P}_\s \div (u \otimes u)(\t) d\t, 
\end{equation}
where $\mathcal{P}_\s$ denotes the Helmholtz decomposition 
which is defined by $\mathcal{P}_\s:=Id+(-\Delta)^{-1}\N \div$. 
\end{dfn}

In \cite{N26}, analyticity and time-decay estimates for solutions to \eqref{eqn;NS} were established in the endpoint critical Fourier--Herz space $\wh{\dot{B}}{_{\infty,2}^{-1}}(\R^d)$. 
We recall the results below. 

\begin{prop}[Global Well-posedness and Analyticity, {\rm \cite{N26}}] \label{thm;GWP} 
Let $d \ge 2$. Suppose that the initial velocity $u_0$ satisfies 
$$ 
u_0 \in \widehat{\dot{B}}{_{\infty,2}^{-1}}(\R^d) \quad {\it and } \quad 
\div u_0 =0 \ \ in \ \,\mathcal{S}'.
$$
If in addition, $\|u_0\|_{\widehat{\dot{B}}{_{\infty,2}^{-1}}}$ is sufficiently small, then the problem \eqref{eqn;NS} admits a unique global mild solution  $u \in L^2(\R_+;\widehat{L}^\infty)$. 
In particular, the solution $u$ fulfills 
$e^{\sqrt{\mu t}|\N|}u \in L^2(\R_+;\widehat{L}^\infty)$. 
\end{prop}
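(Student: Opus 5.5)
The plan is a contraction-mapping argument for \eqref{IE}, carried out in the weighted space
\[
X:=\left\{u : \|u\|_X:=\left\|e^{\sqrt{\mu t}|\N|}u\right\|_{L^2(\R_+;\wh{L}^\infty)}+\left\|e^{\sqrt{\mu t}|\N|}u\right\|_{\widetilde{L}^\infty(\R_+;\fB{^{-1}_{\infty,2}})}<\infty\right\}.
\]
Here the Chemin--Lerner type norm $\widetilde L^\infty$ is taken dyadic-piece-wise, so that $\sup_t$ sits inside the $\ell^2$ sum.

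\textbf{Linear estimate.} On each dyadic block, $|\xi|\sim 2^j$, we have
\[
e^{\sqrt{\mu t}|\xi|}e^{-\mu t|\xi|^2}\le C e^{-\mu t|\xi|^2/2},
\]
because $\sqrt{\mu t}|\xi|\le \mu t|\xi|^2/2+1/2$. It follows that $\|e^{\sqrt{\mu t}|\N|}e^{\mu t\Delta}\dot\Delta_j u_0\|_{\wh L^\infty}\les e^{-c\mu t2^{2j}}\|\dot\Delta_j u_0\|_{\wh L^\infty}$. Taking the $L^2_t$ norm gains a factor $2^{-j}$. Hence
\[
\|e^{\sqrt{\mu t}|\N|}e^{\mu t\Delta}u_0\|_{L^2_t\wh L^\infty}\le \sum_j\|\cdot\|_{L^2_t}
\]
is controlled by $\ell^1$ rather than $\ell^2$, which is too weak. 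I would therefore replace the $L^2_t\wh L^\infty$ norm by the stronger $\ell^2$-in-$j$ quantity
\[
\Bigl(\sum_j \|e^{\sqrt{\mu t}|\N|}\dot\Delta_j u\|_{L^2_t\wh L^\infty}^2\Bigr)^{1/2}.
\]
This quantity dominates $\|\,\|\wh u(\xi,t)\|_{L^\infty_\xi}\|_{L^2_t}$, since the pieces $\wh\phi_j\wh u$ have finitely overlapping supports and the $L^\infty_\xi$ norm of a sum with bounded overlap is bounded by a constant times the supremum over the pieces. I need to be careful that $\sup_j\le(\sum_j)^{1/2}$ pointwise in $t$; this is exactly what makes $L^2_t\wh L^\infty$ controlled by $\widetilde L^2_t\fB^0_{\infty,2}$. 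With this choice the linear estimate reads
\[
\|e^{\mu t\Delta}u_0\|_X\les\|u_0\|_{\fB^{-1}_{\infty,2}}.
\]

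\textbf{Bilinear estimate.} The Duhamel term is treated blockwise using the same heat-factor estimate. The nonlinear weight is split via $|\xi|\le|\xi-\eta|+|\eta|$, together with
\[
\sqrt{t}|\xi|-\sqrt{t-\tau}\,|\xi|\le \sqrt{\tau}|\xi|,
\]
using $\sqrt t\le\sqrt{t-\tau}+\sqrt\tau$. This gives
\[
e^{\sqrt{\mu t}|\xi|}e^{-\mu(t-\tau)|\xi|^2}\le C e^{-c\mu(t-\tau)|\xi|^2}\,e^{\sqrt{\mu\tau}|\xi-\eta|}e^{\sqrt{\mu\tau}|\eta|}.
\]
So the analytic weight passes onto each factor of $\wh u*\wh u$. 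In $\wh L^\infty$, products become convolutions, and Young's inequality requires one factor in $\wh L^\infty$ (i.e.\ $L^1_\xi$ of $\wh u$) and the other in $\wh L^1$-type... In fact $\|\wh f*\wh g\|_{L^1_\xi}\le\|\wh f\|_{L^1}\|\wh g\|_{L^1}$, so $\wh L^\infty$ is an algebra. Since $\mathcal P_\s$ is bounded on $\wh L^\infty$ with symbol $P(\xi)$, we get
\[
\|\dot\Delta_j \mathcal P_\s\div(v\otimes w)\|_{\wh L^\infty}\les 2^j\|v\|_{\wh L^\infty}\|w\|_{\wh L^\infty}.
\]
Young's inequality in time, applied with the kernel $2^{j}e^{-c(t-\tau)2^{2j}}$ (whose $L^1_t$ norm is $2^{-j}$), yields
\[
\|\text{Duhamel}_j\|_{L^2_t\wh L^\infty}\les 2^{-j}\cdot 2^j\,\bigl\|\,\|v\|_{\wh L^\infty}\|w\|_{\wh L^\infty}\bigr\|_{L^2_t}.
\]

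\textbf{Main obstacle.} The last estimate puts $L^1_t$ on the product and not the $L^2_t$ we need; $\ell^2$ summability in $j$ is also lost. The fix I would try first is the Iwabuchi--Takada approach. Estimate $\dot\Delta_j$ of the product in $L^1_t\wh L^\infty$ by $\|v\|_{L^2\wh L^\infty}\|w\|_{L^2\wh L^\infty}$, which is fine for $\ell^\infty$ in $j$. Then recover the $\ell^2$ structure through the time-frequency kernel: the map $f\mapsto\{2^{j}\int_0^t e^{-c(t-\tau)2^{2j}}\dot\Delta_jf\,d\tau\}$ is bounded from $L^1_t\wh L^1$-type data to $\ell^2_jL^2_t$, by a Schur test / orthogonality across dyadic shells in the $\xi$-variable. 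Writing everything pointwise in $\xi$,
\[
\int\Bigl|\int_0^t|\xi|e^{-c(t-\tau)|\xi|^2}F(\xi,\tau)d\tau\Bigr|^2dt\les\Bigl(\int|F(\xi,\tau)|d\tau\Bigr)^2.
\]
This would close the bilinear bound $\|B(u,v)\|_X\les\|u\|_X\|v\|_X$.

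\textbf{Conclusion.} Smallness of $\|u_0\|_{\fB^{-1}_{\infty,2}}$ then gives a unique fixed point in a small ball of $X$, and in particular $e^{\sqrt{\mu t}|\N|}u\in L^2(\R_+;\wh L^\infty)$. Uniqueness in the whole class $L^2(\R_+;\wh L^\infty)$ follows by rerunning the bilinear estimate without weights on short time intervals, where the $L^2$ norm is small by absolute continuity.
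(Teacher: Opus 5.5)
\textbf{Gap: the solution space.} The step that fails is your claim that $\bigl(\sum_j\|e^{\sqrt{\mu t}|\N|}\dot\Delta_j u\|_{L^2_t\wh{L}^\infty}^2\bigr)^{1/2}$ dominates $\|e^{\sqrt{\mu t}|\N|}u\|_{L^2(\R_+;\wh{L}^\infty)}$. By definition $\|f\|_{\wh{L}^\infty}=\|\wh{f}\|_{L^1_\xi}$, not $\|\wh{f}\|_{L^\infty_\xi}$. You use the $L^1_\xi$ reading yourself when you say $\wh{L}^\infty$ is an algebra; under the $L^\infty_\xi$ reading that would be false.

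For an $L^1_\xi$ norm, dyadic pieces add rather than take a supremum. With $\wh{\phi}\ge0$ one has $\|f\|_{\wh{L}^\infty}=\sum_j\|\dot\Delta_j f\|_{\wh{L}^\infty}$, i.e.\ $\wh{L}^\infty=\fB^0_{\infty,1}$, consistent with Lemma~\ref{lem;equi} for $p=\infty$. So $\ell^2_jL^2_t\wh{L}^\infty$ does not embed into $L^2_t\wh{L}^\infty$. For example, $\wh{u}(\xi,t)=\mathbf{1}_{[0,1]}(t)\sum_{j\ge1}j^{-1}2^{-jd}\wh{\phi}_j(\xi)$ has $\|\dot\Delta_j u(t)\|_{\wh{L}^\infty}\sim j^{-1}$, which is square summable uniformly in $t$, yet $\|u(t)\|_{\wh{L}^\infty}=\infty$ for $t\le1$.

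Consequently your $X$ gives neither of the two things you need. It does not give the bound $\|v\otimes w\|_{L^1_t\wh{L}^\infty}\le\|v\|_{L^2_t\wh{L}^\infty}\|w\|_{L^2_t\wh{L}^\infty}$ on which your treatment of the ``main obstacle'' rests. Nor does it give the conclusion $e^{\sqrt{\mu t}|\N|}u\in L^2(\R_+;\wh{L}^\infty)$ that the proposition asserts. You also cannot close the scheme inside the $\ell^2$-type space, because $\fB^0_{\infty,2}$ is not an algebra: the low--high paraproduct is controlled only through $\sum_{l<k}\|\dot\Delta_l v\|_{\wh{L}^\infty}$, which is an $\ell^1$ quantity.

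\textbf{The missing idea.} The $L^2_t$ norm itself supplies the passage from $\ell^2$ to $\ell^1$, through almost orthogonality in time, so you do not need to change the norm. You placed the Schur/orthogonality argument on the wrong term: it belongs to the linear part. There, expand the square:
\[
\int_0^\infty\Bigl(\int_{\R^d}e^{-\frac{\mu t}{2}|\xi|^2}|\wh{u}_0(\xi)|\,d\xi\Bigr)^2dt=\frac{2}{\mu}\iint_{\R^d\times\R^d}\frac{|\wh{u}_0(\xi)|\,|\wh{u}_0(\eta)|}{|\xi|^2+|\eta|^2}\,d\xi\,d\eta .
\]
Insert $|\wh{u}_0|=\sum_j\wh{\phi}_j|\wh{u}_0|$. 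For $|\xi|\sim2^j$ and $|\eta|\sim2^k$ one has $(|\xi|^2+|\eta|^2)^{-1}\les 2^{-j-k}2^{-|j-k|}$, so Schur's test on $\Z$ bounds the right-hand side by a constant times $\|u_0\|_{\fB^{-1}_{\infty,2}}^2$. Combined with your bound $e^{\sqrt{\mu t}|\xi|-\mu t|\xi|^2}\le e^{1/2}e^{-\mu t|\xi|^2/2}$, this gives $\|e^{\sqrt{\mu t}|\N|}e^{\mu t\Delta}u_0\|_{L^2(\R_+;\wh{L}^\infty)}\les\|u_0\|_{\fB^{-1}_{\infty,2}}$ directly.

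For the Duhamel term no orthogonality is needed, since $L^1_t\wh{L}^\infty$ data are already $\ell^1$ in $j$. Take square roots in your pointwise-in-$\xi$ inequality, integrate in $\xi$, and use Minkowski's inequality $\|\cdot\|_{L^2_tL^1_\xi}\le\|\cdot\|_{L^1_\xi L^2_t}$. This yields $\|\int_0^te^{\mu(t-\tau)\Delta}\mathcal{P}_\sigma\div F\,d\tau\|_{L^2_t\wh{L}^\infty}\les\|F\|_{L^1_t\wh{L}^\infty}$. After moving the analytic weight onto the two factors as you describe, $\|V\otimes W\|_{L^1_t\wh{L}^\infty}\le\|V\|_{L^2_t\wh{L}^\infty}\|W\|_{L^2_t\wh{L}^\infty}$ closes the contraction in $\{u: e^{\sqrt{\mu t}|\N|}u\in L^2(\R_+;\wh{L}^\infty)\}$ alone.

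Alternatively, you can keep a Chemin--Lerner structure, but it must be $\widetilde{L}^\infty_t\fB^{-1}_{\infty,2}\cap\widetilde{L}^1_t\fB^{1}_{\infty,2}$. That space does embed into $L^2_t\wh{L}^\infty$ by the same $2^{-|j-k|}$ argument; your $\widetilde{L}^2_t\fB^{0}_{\infty,2}$ component does not. Once the unweighted bilinear bound holds on short intervals, your uniqueness argument goes through.
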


\begin{prop}[$L^p$-$L^1$ Type Time-decay Estimate, \cite{N26}] \label{thm;decay} 
Let $d\ge2$ and $1 \le p \le \infty$. 
Suppose that the initial velocity $u_0$ satisfy the same assumptions as in Proposition~\ref{thm;GWP} and 
$u$ denotes the corresponding global mild solution of the problem \eqref{eqn;NS}. 
If we additionally assume $u_0 \in \wh{\dot{B}}{_{1,\infty}^0}(\R^d)$,  
then for all $s>-d/p'$, the global mild solution $u$ fulfills the following decay estimate:   
\begin{equation} \label{est;LpL1}
\||\N|^s u(t)\|_{\fB{_{p,1}^0}} \les t^{-\frac{d}{2}(1-\frac{1}{p})-\frac{s}{2}}, \ \ t>0. 
\end{equation}
In particular, if $2\le p \le \infty$, it holds true that $\||\N|^s u(t)\|_{\dot{B}{_{p,1}^0}} \les t^{-\frac{d}{2}(1-\frac{1}{p})-\frac{s}{2}}$ for $t>0$. 
\end{prop}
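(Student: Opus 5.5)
The idea is to estimate each Littlewood--Paley piece of $u(t)$ in $\wh{L}^p$, i.e. to bound $\|\wh{\phi}_j\wh{u}(t)\|_{L^{p'}}$, and then sum over $j\in\Z$ with the weight $2^{sj}$. The argument runs on the mild formulation \eqref{IE}. I treat the linear part and the Duhamel part separately, and obtain the decay for the Duhamel part by a continuity (bootstrap) argument.

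\textbf{Linear part.} Since $u_0\in\fB{^0_{1,\infty}}$, we have $\|\wh{\phi}_j\wh{u_0}\|_{L^\infty}\le C$ uniformly in $j$. Using H\"older on the annulus $|\xi|\sim 2^j$ gives
\[
2^{sj}\|\wh{\phi}_j e^{-\mu t|\xi|^2}\wh{u_0}\|_{L^{p'}} \les 2^{(s+d/p')j}e^{-c\mu t2^{2j}}\|u_0\|_{\fB{^0_{1,\infty}}}.
\]
Summing over $j$ converges at low frequencies precisely because $s+d/p'>0$, and at high frequencies because of the Gaussian factor. The sum is comparable to $\int_0^\infty \lambda^{s+d/p'}e^{-c\mu t\lambda^2}\,d\lambda/\lambda \sim t^{-\frac{d}{2}(1-\frac1p)-\frac s2}$, which is exactly the claimed rate.

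\textbf{Nonlinear part.} Set
\[
N(t):=\sup_{0<\tau<t}\,\tau^{\frac d2}\|u(\tau)\|_{\wh{L}^\infty}
\]
(the case $p=\infty$, $s=0$), and also use the $\fB{^0_{1,\infty}}$-type quantity $\sup_\tau\|\wh{u}(\tau)\|_{L^\infty}$. By Young's inequality, $\|\wh{u\otimes u}\|_{L^\infty}\le\|\wh u\|_{L^2}^2$ and $\|\wh{u\otimes u}\|_{L^{p'}}\le \|\wh u\|_{L^1}\|\wh u\|_{L^{p'}}$. The symbol of $\mathcal{P}_\s\div$ is bounded by $|\xi|$, so on the $j$-th block the Duhamel term is controlled by
\[
\int_0^t 2^{(1+s+d/p')j}e^{-c\mu(t-\tau)2^{2j}}\|\wh{u\otimes u}(\tau)\|_{L^\infty}\,d\tau .
\]
I split the time integral at $t/2$.
\begin{itemize}
\item On $(0,t/2)$, the heat factor yields $t^{-\frac d2(1-\frac1p)-\frac s2-\frac12}$ after summing in $j$. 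The remaining factor $\int_0^{t/2}\|\wh u\|_{L^2}^2\,d\tau$ is bounded, since $\|\wh u\|_{L^2}^2\le\|\wh u\|_{L^1}\|\wh u\|_{L^\infty}$, $\|\wh u\|_{L^\infty}$ stays bounded, and $u\in L^2(\R_+;\wh{L}^\infty)$ only gives $\wh u\in L^2_tL^1_\xi$. So I will instead interpolate using the analyticity weight $e^{\sqrt{\mu t}|\N|}u\in L^2(\R_+;\wh L^\infty)$ from Proposition~\ref{thm;GWP}. This gives the rate with an extra $t^{-1/2}$ to spare, possibly up to a logarithm in $d=2$.
\item On $(t/2,t)$, I put the full derivative count on the heat kernel, using $\sum_j 2^{(1+s')j}e^{-c(t-\tau)2^{2j}}\les (t-\tau)^{-(1+s')/2}$ with $s'<1$ chosen by interpolation. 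I then use the bootstrap bound $\|\wh u(\tau)\|_{L^1}\les \tau^{-d/2}N$ together with the smallness of $\|u\|_{L^2(\R_+;\wh L^\infty)}$.
\end{itemize}
This gives $N(t)\le C+\varepsilon N(t)$ with $\varepsilon$ small, so $N$ is bounded. The general pair $(p,s)$ then follows by inserting this bound into the same splitting.

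\textbf{Main obstacle and the $\dot B$ claim.} The main obstacle is the time-integrability near $\tau=0$ and near $\tau=t$ at the endpoint summability $\ell^1$. There the critical space gives only $L^2_t$ control, and each singularity has to be balanced by the $e^{\sqrt{\mu t}|\N|}$ gain (which supplies arbitrary powers of $t^{1/2}|\xi|$ at high frequency). Finally, for $2\le p\le\infty$ the Besov statement follows from Hausdorff--Young, $\|\dot\Delta_j f\|_{L^p}\le\|\wh{\phi}_j\wh f\|_{L^{p'}}$, applied blockwise and summed in $\ell^1$.
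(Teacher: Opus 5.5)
This proposition is quoted from \cite{N26} and is not proved in the paper, so I can only assess your outline on its own terms. Your linear estimate and the Hausdorff--Young reduction for the $\dot{B}^0_{p,1}$ claim are fine. The architecture is also workable: split at $t/2$, bootstrap on $N(t)=\sup_{\tau<t}\tau^{d/2}\|u(\tau)\|_{\widehat{L}^\infty}$, and absorb with a small $L^2_t$ norm. The gap is in the one step that actually closes the bootstrap, the piece on $(t/2,t)$, which fails as you wrote it.

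For the $\widehat{L}^\infty$ target, putting the full derivative on the heat kernel leaves at best the weight $(t-\tau)^{-1/2}$; H\"older on annuli cannot take you below $\|\widehat{u\otimes u}\|_{L^1}$ here. Since $(t-\tau)^{-1/2}\notin L^2(t/2,t)$, the integral $\int_{t/2}^t(t-\tau)^{-1/2}\tau^{-d/2}N\,\|u(\tau)\|_{\widehat{L}^\infty}\,d\tau$ is \emph{not} controlled by $\|u\|_{L^2(\R_+;\widehat{L}^\infty)}$. Your condition $s'<1$ only makes the weight integrable, not square integrable. Using $N$ on both factors instead gives $t^{d/2}N^2\int_{t/2}^t(t-\tau)^{-1/2}\tau^{-d}\,d\tau\sim N^2t^{(1-d)/2}$, which is quadratic and unbounded as $t\to0$.

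What is needed is to leave strictly less than one derivative on the heat kernel and move the rest onto one factor, paid for by analyticity. Take $0<a<1$, set $g(\tau):=\|e^{\sqrt{\mu\tau}|\N|}u(\tau)\|_{\widehat{L}^\infty}$, and use $|\xi|^{1-a}\le|\xi-\eta|^{1-a}+|\eta|^{1-a}$ together with $\||\xi|^{1-a}\widehat{u}(\tau)\|_{L^1}\lesssim\tau^{-(1-a)/2}g(\tau)$. Then
\[
t^{\frac d2}\|\mathcal{N}_2[u](t)\|_{\widehat{L}^\infty}\lesssim t^{\frac d2}\int_{t/2}^t(t-\tau)^{-\frac a2}\,\tau^{-\frac{1-a}{2}}g(\tau)\,\tau^{-\frac d2}N(t)\,d\tau\lesssim N(t)\,\|g\|_{L^2(\R_+)},
\]
uniformly in $t>0$, by Cauchy--Schwarz. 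Your final paragraph gestures at the analytic gain, but the mechanism you actually specify is the one that does not close.

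There are three further points to fix.

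\textbf{The interval $(0,t/2)$.} You need neither analyticity nor an extra $t^{-1/2}$ here. Set $K:=\sup_\tau\|\widehat{u}(\tau)\|_{L^\infty}$. Cauchy--Schwarz gives $\int_0^{t/2}\|\widehat{u}\|_{L^\infty}\|\widehat{u}\|_{L^1}\,d\tau\le K(t/2)^{1/2}\|u\|_{L^2(\R_+;\widehat{L}^\infty)}$. This exactly compensates the extra $t^{-1/2}$ from the derivative and yields the target rate.

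\textbf{Finiteness of $K$ and $N$.} You use $K<\infty$ without proof. It follows from the pointwise bound $\int_0^t|\xi|e^{-\mu(t-\tau)|\xi|^2}\|u(\tau)\|_{\widehat{L}^\infty}\,d\tau\le(2\mu)^{-1/2}\|u\|_{L^2(\R_+;\widehat{L}^\infty)}$, which gives $K\le\|\widehat{u}_0\|_{L^\infty}+C\varepsilon K$. This absorption and $N\le C+\varepsilon N$ both presuppose finiteness, which you should secure by running the estimates on the Picard iterates.

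\textbf{General $(p,s)$.} The claim that ``the general $(p,s)$ follows from the same splitting'' hides a case distinction on $(t/2,t)$.
For $s>-1$, the transfer above works with $a\in(0,\min\{1,1+s\})$ and Young's inequality $\|(|\cdot|^\sigma|\widehat{u}|)*|\widehat{u}|\|_{L^{p'}}\le\||\cdot|^\sigma\widehat{u}\|_{L^1}\|\widehat{u}\|_{L^{p'}}$.
For $-d/p'<s\le-1$ (nonempty whenever $d/p'>1$), no positive power of $|\xi|$ can be left on the heat kernel. Instead, trade integrability on the dyadic annuli via $\|\widehat{\phi}_j\widehat{F}\|_{L^{p'}}\lesssim 2^{dj(1/p'-1/q')}\|\widehat{F}\|_{L^{q'}}$ with $d/q'\in(s+d/p',\,1+s+d/p')$. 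Then use $\|\widehat{u\otimes u}\|_{L^{q'}}\le\|\widehat{u}\|_{L^1}\|\widehat{u}\|_{L^{q'}}$, with $\|\widehat{u}\|_{L^1}\in L^2_t$ small.
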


\begin{rem}  
{\rm
From the definitions of the Fourier--Herz spaces and the Fourier--Lebesgue spaces, we can easily check that the fact $\wh{\dot{B}}{_{p,1}^0}(\R^{d}) \hookrightarrow \wh{L}^{p}(\R^d)$ holds true for all $1\le p\le \infty$. Therefore, it directly follows from \eqref{est;LpL1} that the following estimate holds: 
\begin{equation}\label{est;LpL1-re}
\||\N|^s u(t)\|_{\wh{L}^{p}} \les t^{-\frac{d}{2}(1-\frac{1}{p})-\frac{s}{2}}, \ \ t>0. 
\end{equation}
}
\end{rem}

The above Proposition~\ref{thm;decay} and \eqref{est;LpL1-re} claim that the $L^p$-$L^1$ decay estimate as in \cite{S85, S86} holds true for the 
$\widehat{\dot{B}}{_{\infty,2}^{-1}}(\R^d)$-solution. 
Here, let us focus on the auxiliary assumption $u_0 \in \widehat{\dot{B}}{_{1,\infty}^0}(\R^d)$. 
By virtue of the Riemann--Lebesgue theorem and Lemma \ref{lem;equi} below, 
we can immediately see that 
$$L^1(\R^d) \hr \widehat{L}^1(\R^d) \simeq \widehat{\dot{B}}{_{1,\infty}^0}(\R^d)$$ 
holds true. 
Generally, the following relations hold between the Lebesgue spaces $L^p(\R^d)$ and 
Fourier--Lebesgue spaces $\widehat{L}^p(\R^d)$: 
\[
L^p(\R^d) \hr \widehat{L}^p(\R^d) \quad \text{if} \quad p \le 2, \qquad 
\widehat{L}^p(\R^d) \hr L^p(\R^d) \quad \text{if} \quad p \ge 2. 
\]
Then, thanks to this embeddings, 
we could obtain more sharp $L^p$-$L^1$ decay estimate in the case of $p \ge 2$ because that for all $t>0$, it holds that 
\[
t^{\frac{d}{2}(1-\frac{1}{p})} \|u(t)\|_{L^p} \les t^{\frac{d}{2}(1-\frac{1}{p})} \|u(t)\|_{\widehat{L}^p} \les \|u_0\|_{\widehat{L}^1} \les \|u_0\|_{L^1}. 
\]

\section{Main Results}  

\indent

In this section, we would like to state our main results in this paper. 
First, we shall introduce a key assumption which is essential for stating the asymptotic formulas of the solution $u$ to \eqref{eqn;NS}, in the case where the initial velocity $u_{0}$ belongs to $\wh{L}^{1}(\R^{d})$. Here, noticing that $u_{0}\in \wh{L}^{1}(\R^{d})$ means that its Fourier transform $\wh{u}_{0}$ is essentially bounded on $\mathbb{R}^d$. In our approach, we need to prescribe the information on $\wh{u}_{0}$ near the origin $|\xi| \to 0$. More precisely, we consider the following assumption: 

\medskip
\noindent
\underline{\bf Assumption (A).}
For the initial velocity $u_{0}$ satisfying $u_{0}\in \wh{L}^1(\R^{d})$, suppose that there exists a non-zero vector $M_{0}={}^{\rm t}(M_{0}^{1}, \cdots, M_{0}^{d})\in \R^{d}$ satisfying the following condition: 
\begin{equation}\label{cond-1}
\lim_{|\xi|\to 0}\left|\wh{u}_{0}(\xi)-P(\xi)M_{0}\right|=0.
\end{equation}

In fact, this type of the condition naturally appears when one describes asymptotic profiles via the low-frequency behavior of the Fourier transform, which corresponds to the far-field behavior in the physical space. We note that this condition is inspired by an idea used by Narazaki--Nishihara \cite{NN08} (see, also \cite{F19}). 
In addition, the appearance of the term $P(\xi)M_{0}$ is naturally related to the divergence-free condition $\div u_0 =0$. Indeed, since the initial velocity field $u_{0}$ is required to be divergence-free, its Fourier transform $\wh{u}_{0}$ must be orthogonal to $\xi$, i.e., $\xi \cdot \wh{u}_{0}(\xi)=0$ holds for all $\xi \neq0$. Hence, the limiting profile of $\wh{u}_{0}(\xi)$ as $|\xi|\to 0$ should lie in the range of the Helmholtz projection $P(\xi)$. In this sense, the vector $M_{0}$ can be interpreted as a generalized moment associated with the low-frequency behavior of the initial data.

\medskip
To investigate the more detailed asymptotic behavior, we need an additional assumption that is slightly stronger than {\rm (A)}. 
The following condition provides a rate of convergence in \eqref{cond-1}.

\medskip
\noindent
\underline{\bf Assumption (B).}
Suppose that the all conditions appeared in the assumption {\rm (A)} hold true. In addition, we also assume that there exists $\alpha>0$ such that the initial velocity $u_{0}$ satisfy 
\begin{equation}\label{cond-2}
\left|\wh{u}_{0}(\xi)-P(\xi)M_{0}\right| =O\left(|\xi|^{\al}\right) \quad \text{as} \quad |\xi|\to 0. 
\end{equation}

In what follows, when we investigating the asymptotic behavior of the solution to \eqref{eqn;NS} under this assumption, let us work under the following conditions on the parameters. 

\medskip
\noindent
\underline{\bf Condition (${\mathbf C}$).} \;Let $d \ge 2$, $s \in \R$ and $1 \le p \le \infty$ satisfy
the followings: 
\begin{align*}
\text{If} \ \ p=1, \ \ s\ge0. \qquad 
\text{If} \ \ 1<p<\infty, \ \ s>-\min\left\{\frac{d}{p'}, 1+\frac{d}{p}\right\}. \qquad 
&\text{If} \ \ p=\infty, \ \ s\ge-1. 
\end{align*}

Now, let us finally state our main results. 
First, we shall give the results that the first asymptotic profile of the solution $u$ to \eqref{eqn;NS} can be given by $S_{\mu}(t)M_{0}$, under the weak assumption {\rm (A)}.  
\begin{thm}[{1st Order Asymptotics, under Weak Assumptions}] \label{thm;1st-1} 
Let $d\ge2$, $s \in \R$ and $1\le p\le \infty$. Suppose that the initial velocity $u_{0}$ satisfy $u_0 \in \widehat{\dot{B}}{_{\infty,2}^{-1}}(\R^d)\cap \wh{\dot{B}}{_{1,\infty}^0}(\R^d)$ with 
$\div u_0 =0$ in $\mathcal{S}'$, and $\|u_0\|_{\widehat{\dot{B}}{_{\infty,2}^{-1}}}$ is sufficiently small. 
Moreover, we assume that $u_{0}$ also fulfill the assumption {\rm (A)}, and the parameters 
$d$, $p$ and $s$   satisfy the condition ${\rm (C)}$.
Then, for the global mild solution $u$ to \eqref{eqn;NS}, the following asymptotic formula holds:
\begin{equation}\label{1st-1}
\lim_{t\to \infty}t^{\frac{d}{2}(1-\frac{1}{p})+\frac{s}{2}}\left\||\N|^{s}\left(u(t)-S_{\mu}(t)M_{0}\right)\right\|_{\wh{L}^{p}}=0. 
\end{equation}
\end{thm}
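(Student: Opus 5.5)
We split $u(t)-S_\mu(t)M_0$ through the mild formulation \eqref{IE}:
\[
u(t)-S_\mu(t)M_0=\bigl(e^{\mu t\Delta}u_0-S_\mu(t)M_0\bigr)-\int_0^t e^{\mu(t-\t)\Delta}\mathcal{P}_\s\div(u\otimes u)(\t)\,d\t=:I(t)-N(t).
\]
Since $\wh{S_\mu(t)M_0}=e^{-\mu t|\xi|^2}P(\xi)M_0$, we have $\wh{I}(\xi,t)=e^{-\mu t|\xi|^2}(\wh u_0(\xi)-P(\xi)M_0)$.

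For the linear part, write $\|\,|\N|^sI(t)\|_{\wh L^p}=\|\,|\xi|^s e^{-\mu t|\xi|^2}(\wh u_0-PM_0)\|_{L^{p'}}$ and rescale $\xi=\eta/\sqrt{t}$. This gives
\[
t^{\frac d2(1-\frac1p)+\frac s2}\|\,|\N|^sI(t)\|_{\wh L^p}=\bigl\|\,|\eta|^se^{-\mu|\eta|^2}\bigl(\wh u_0(\eta/\sqrt t)-P(\eta)M_0\bigr)\bigr\|_{L^{p'}_\eta},
\]
using the $0$-homogeneity of $P$. By Assumption (A) the integrand tends to $0$ pointwise for $\eta\ne0$. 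It is dominated by $C|\eta|^se^{-\mu|\eta|^2}$, because $\wh u_0\in L^\infty$ ($u_0\in\wh L^1\simeq\wh{\dB}{}^0_{1,\infty}$) and $|P|\le1$. This majorant lies in $L^{p'}$ exactly when $sp'>-d$, i.e. $s>-d/p'$, with $s\ge0$ when $p=1$. Dominated convergence then gives the conclusion for $p>1$. For $p=1$ the norm is a supremum, and we argue directly: split into $|\eta|\le R\sqrt{t}\,\delta$-type regions where $|\wh u_0-PM_0|$ is small, and the remaining region where the Gaussian factor is small.

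For the nonlinear part, $|\wh{\mathcal{P}_\s\div(u\otimes u)}|\lesssim|\xi|\,|\wh{u\otimes u}|$, and $\|\wh{u\otimes u}\|_{L^\infty}\le\|\wh u\|_{L^1}^2=\|u\|_{\wh L^\infty}^2$. Split $\int_0^t=\int_0^{t/2}+\int_{t/2}^t$.

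On $[0,t/2]$, bound $\|\,|\xi|^{s+1}e^{-\mu(t-\t)|\xi|^2}\|_{L^{p'}}\lesssim t^{-\frac d2(1-\frac1p)-\frac{s+1}{2}}$, which requires $s+1>-d/p'$. The $\t$-integral is controlled by $\int_0^{t/2}\|u(\t)\|_{\wh L^\infty}^2d\t$. This is finite by Proposition~\ref{thm;GWP}, but it only produces $O(t^{-1/2})$ times the main rate if we use \eqref{est;LpL1-re} with $p=\infty$, namely $\|u(\t)\|_{\wh L^\infty}\lesssim \t^{-d/2}$ together with the $L^2_t$ bound near $0$. Concretely, $\int_0^{t/2}\min(\|u\|^2_{\wh L^\infty},\t^{-d})\,d\t$ is bounded for $d\ge2$, and the extra factor $t^{-1/2}$ gives $o(1)$ after normalization.

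On $[t/2,t]$, put the derivative and the $|\N|^s$ onto the nonlinearity instead. Use Young/Hölder in $\xi$,
\[
\|\,|\xi|^{s+1}\wh{u\otimes u}\|_{L^{p'}}\lesssim \|\,|\xi|^{s+1}|\wh u|\|_{L^{p'}}\|\wh u\|_{L^1}+\|\wh u\|_{L^{p'}}\,\|\,|\xi|^{s+1}|\wh u|\|_{L^1}
\]
for $s+1\ge0$. The factors are bounded via \eqref{est;LpL1-re} at time $\t\sim t$, which yields $t^{-\frac d2(1-\frac1p)-\frac s2-\frac12-\frac d2}$. Combined with the interval length $t$ and $e^{-\mu(t-\t)|\xi|^2}\le1$, this is $o(1)$ after normalization since $d\ge2$. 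If $s+1<0$, keep part of the heat kernel: use $\|\,|\xi|^{s+1}e^{-\mu(t-\t)|\xi|^2}\|_{L^{p'}}\lesssim (t-\t)^{-\frac d2(1-\frac1p)-\frac{s+1}2}$, integrable near $\t=t$ iff $\frac d2(1-\frac1p)+\frac{s+1}{2}<1$. This is where the restriction $s>-1-d/p$ from Condition (C) enters, with the remaining factor $\|\wh{u\otimes u}\|_{L^\infty}\lesssim \t^{-d}$.

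The main obstacle is the bookkeeping of exponents in the nonlinear term near $\t=t$ and the endpoint cases $p=1$, $p=\infty$, $s=-1$. Here we would rely on the Herz-space version $\fB{_{p,1}^0}$ of Proposition~\ref{thm;decay} to avoid logarithmic losses, splitting dyadically in frequency if needed.
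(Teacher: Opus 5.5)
Your linear part (rescaling $\xi=\eta/\sqrt t$, using the $0$-homogeneity of $P$, then dominated convergence) is a clean alternative to the $\varepsilon$--$\delta$ splitting of Proposition~\ref{prop-lin-half1-1}. Your treatment of $\int_{t/2}^t$ for $s+1\ge 0$, via $|\xi|^{s+1}\lesssim|\xi-\eta|^{s+1}+|\eta|^{s+1}$ and Young's inequality, is also correct.

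The nonlinear part, however, contains two genuine errors. First, the inequality $\|\wh{u\otimes u}\|_{L^\infty}\le\|\wh u\|_{L^1}^2=\|u\|_{\wh L^\infty}^2$ is false. Young's inequality gives $\|\wh u*\wh u\|_{L^1}\le\|\wh u\|_{L^1}^2$, which is the algebra property of $\wh L^\infty$. A sup-norm bound on $\wh u*\wh u$ instead needs $\|\wh u\|_{L^1}\|\wh u\|_{L^\infty}$. For example, take $\wh u=\varepsilon^{-d}\chi(\cdot/\varepsilon)$: then $\|\wh u\|_{L^1}$ stays fixed while $\wh u*\wh u(0)\to\infty$. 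The correct bound is $\|u\otimes u\|_{\wh L^1}\lesssim\|u\|_{\wh L^1}\|u\|_{\wh L^\infty}\lesssim\tau^{-d/2}$, which is not integrable at infinity when $d=2$.

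So your log-free bound for $\int_0^{t/2}$ is not merely unproved but false in two dimensions. There $\int_{t/2}^t$ is of order $t^{-\frac d2(1-\frac1p)-\frac{s+1}{2}}$, and Proposition~\ref{prop-N-asymp-2} (under (B) with $\alpha\ge1$) shows that the Duhamel term carries a genuine factor $\log t$, which must therefore come from $[0,t/2]$. For Theorem~\ref{thm;1st-1} this is repairable. Use $\sup_\tau\|u(\tau)\|_{\wh L^1}<\infty$, Cauchy--Schwarz on $(0,1)$, and \eqref{est;LpL1-re} on $[1,t/2]$. This gives $t^{-\frac d2(1-\frac1p)-\frac{s+1}2}\log(1+t)$, exactly as in \eqref{N1-est}, and $t^{-1/2}\log t\to0$ still suffices. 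The same correction ($\tau^{-d/2}$ in place of your $\tau^{-d}$) in your $s+1<0$ branch is harmless.

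Second, your $s+1<0$ branch on $[t/2,t]$ fails on part of the range allowed by Condition~(C). Integrability of $(t-\tau)^{-\frac d2(1-\frac1p)-\frac{s+1}2}$ near $\tau=t$ is an \emph{upper} bound $s<1-d/p'$, and (C) does not imply it when $d\ge3$. For $d=3$, $p=10$, $s=-6/5$ (admissible, since $-\min\{27/10,13/10\}=-13/10$) the exponent is $5/4$. More generally, it fails for all $s$ slightly below $-1$ whenever $d/p'>2$. So this is not where $s>-1-d/p$ enters.

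In the paper, that lower bound comes from applying the product estimate of Lemma~\ref{lem;A-P} to $u\otimes u$ in $\fB^{s+1}_{p,p'}$ at negative regularity, which is valid for $-\min\{d/p,d/p'\}<s+1<d/p$. Combined with Proposition~\ref{thm;decay}, this gives $\|\div(u\otimes u)(\tau)\|_{\fB^s_{p,p'}}\lesssim\tau^{-\frac d2}\tau^{-\frac{d}{2p'}-\frac{s+1}2}$. The derivatives then sit on the product, and no singularity at $\tau=t$ appears.

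If you prefer to stay on the Fourier side, you can split Hölder's inequality as $\frac1{p'}=\frac1r+\frac1{q'}$ with $r\in\bigl(\max\{p',\frac{d}{1-s}\},\frac{d}{|s+1|}\bigr)$, an interval that is nonempty under (C). You then bound $\||\xi|^{s+1}e^{-\mu(t-\tau)|\xi|^2}\|_{L^r}$, which is now integrable in $\tau$, and use $\|u\otimes u\|_{\wh L^q}\lesssim\|u\|_{\wh L^q}\|u\|_{\wh L^\infty}$. Some such device is needed, and your closing remark about dyadic splitting does not supply it.
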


Moreover, if we additionally assume the strong assumption {\rm (B)}, then the asymptotic rate given in \eqref{1st-1} can be improved. More precisely, we have the following asymptotic formula: 
\begin{thm}[1st Order Asymptotics, under Strong Assumptions] \label{thm;1st-2} 
Suppose that the all conditions as in Theorem~\ref{thm;1st-1} are satisfied. In addition, we assume that the initial velocity $u_{0}$ also fulfill the assumption {\rm (B)}, and the parameters 
$d$, $p$ and $s$ satisfy the condition ${\rm (C)}$. 
Then, for the global mild solution $u$ to \eqref{eqn;NS} with $t\ge 1$, the following asymptotic formula holds:
\begin{equation}\label{1st-2}
\left\||\N|^{s}\left(u(t)-S_{\mu}(t)M_{0}\right)\right\|_{\wh{L}^{p}}
\les 
\begin{cases}
t^{-\frac{d}{2}(1-\frac{1}{p})-\frac{s}{2}-\frac{\min\{\al, 1\}}{2}}, &\text{if} \quad d\ge3, \\
t^{-\frac{d}{2}(1-\frac{1}{p})-\frac{s}{2}}\left\{t^{-\frac{\al}{2}}+t^{-\frac{1}{2}}\log (1+t)\right\}, &\text{if} \quad d=2. 
\end{cases}
\end{equation}
\end{thm}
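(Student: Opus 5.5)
Using the mild formulation \eqref{IE}, we split
\[
u(t)-S_\mu(t)M_0=\bigl(e^{\mu t\Delta}u_0-S_\mu(t)M_0\bigr)-\int_0^t e^{\mu(t-\t)\Delta}\mathcal{P}_\s\div(u\otimes u)(\t)\,d\t=:I_1(t)+I_2(t),
\]
and estimate each term in $\wh{L}^p$ after applying $|\N|^s$. Since $\wh{S_\mu(t)M_0}(\xi)=e^{-\mu t|\xi|^2}P(\xi)M_0$, we have
\[
\wh{I_1}(\xi)=e^{-\mu t|\xi|^2}\bigl(\wh u_0(\xi)-P(\xi)M_0\bigr),
\]
so $\||\N|^sI_1(t)\|_{\wh L^p}=\||\xi|^se^{-\mu t|\xi|^2}(\wh u_0-PM_0)\|_{L^{p'}}$. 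We split the frequencies into $|\xi|\le\delta$ and $|\xi|>\delta$, with $\delta>0$ fixed such that \eqref{cond-2} gives $|\wh u_0-PM_0|\le C|\xi|^\al$ on $|\xi|\le\delta$. The low part is then bounded by $C\||\xi|^{s+\al}e^{-\mu t|\xi|^2}\|_{L^{p'}}\simeq t^{-\frac d2(1-\frac1p)-\frac s2-\frac\al2}$; this is finite exactly when $s+\al>-d/p'$, which follows from Condition (C). The high part is bounded by $(\|\wh u_0\|_{L^\infty}+|M_0|)\,e^{-\mu t\delta^2/2}\sup_\xi |\xi|^se^{-\mu t|\xi|^2/2}\cdots$, which decays exponentially. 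Hence $I_1=O(t^{-\frac d2(1-\frac1p)-\frac s2-\frac\al2})$, which accounts for the $t^{-\al/2}$ terms in both cases.

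For $I_2$ we use $|\wh{\mathcal P_\s\div F}(\xi)|\lesssim|\xi||\wh F(\xi)|$ and $\|\wh{u\otimes u}\|_{L^\infty}\le\|\wh u\|_{L^1}^2\cdot$ (convolution, i.e.\ $\|uu\|_{\wh L^1}\lesssim\|u\|_{\wh L^2}^2$). We split $\int_0^t=\int_0^{t/2}+\int_{t/2}^t$. On $(0,t/2)$ we put everything on the kernel:
\[
\||\xi|^{s+1}e^{-\mu(t-\t)|\xi|^2}\|_{L^{p'}}\,\|u\otimes u(\t)\|_{\wh L^1}\lesssim t^{-\frac d2(1-\frac1p)-\frac s2-\frac12}\|u(\t)\|_{\wh L^2}^2,
\]
and \eqref{est;LpL1-re} gives $\|u(\t)\|_{\wh L^2}^2\lesssim \t^{-d/2}$. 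For small $\t$ we use instead the bound $\int_0^1\|u\|_{\wh L^2}^2d\t$, or better $\|u\|_{\wh L^\infty}\|u\|_{\wh L^1}$ with $u\in L^2(\R_+;\wh L^\infty)$ and $\|u\|_{\wh L^1}\lesssim\|u_0\|_{\wh L^1}$. The remaining integral $\int_1^{t/2}\t^{-d/2}d\t$ is bounded when $d\ge3$ and equals $\log t$ when $d=2$; this is the origin of the $t^{-1/2}\log(1+t)$ term. Note that we do not try to extract a nonlinear profile, since only an upper bound is required. On $(t/2,t)$ we put derivatives on the nonlinearity: we bound $\|\wh{u\otimes u}\|$ in an $L^{q}$ norm through Young/Hölder (for example $\|\wh u\|_{L^{p'}}\|\wh u\|_{L^1}$ type bounds), using the decay \eqref{est;LpL1-re} of $|\N|^{s}u$ and $u$ (or the analyticity $e^{\sqrt{\mu t}|\N|}u\in L^2\wh L^\infty$ to absorb the derivatives), while keeping the kernel singularity $(t-\t)^{-1/2}$ integrable. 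This gives $t^{-\frac d2(1-\frac1p)-\frac s2-\frac d2+\frac12}$ up to constants, which is at least as fast as $t^{-\frac d2(1-\frac1p)-\frac s2-\frac12}$ for $d\ge2$.

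The main obstacle is the near-diagonal piece $\int_{t/2}^t$ together with the endpoint ranges in Condition (C): negative $s$ near $-d/p'$ or $-1-d/p$, and $p=1$, where $L^{p'}=L^\infty$ and no smoothing in $\xi$ is available. There we would distribute $|\xi|^{s+1}$ between the kernel and $\wh{u\otimes u}$. For negative $s$ we would apply Hausdorff--Young-type convolution bounds with $|\xi|^{s+1}$ at low frequency, since the factor $|\xi|^{1+s}$ from the divergence is what allows $s>-1-d/p$. Combining the estimates yields the exponent $\min\{\al,1\}/2$ for $d\ge3$ and $t^{-\al/2}+t^{-1/2}\log(1+t)$ for $d=2$.
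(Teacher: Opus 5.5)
Your decomposition is the paper's. The linear part is exactly Proposition~\ref{prop-lin-half1-2}. Your estimate of $\int_0^{t/2}$ is the paper's estimate \eqref{N1-est} of $\mathcal{N}_1[u]$: the kernel is bounded by $t^{-\frac{d}{2p'}-\frac{s+1}{2}}$, and the product is controlled by $\|u\|_{\wh{L}^1}\|u\|_{\wh{L}^\infty}$ on $(0,1)$ and by $\tau^{-d/2}$ on $(1,t/2)$, which gives $\log t$ when $d=2$.

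\textbf{The gap.} You never actually estimate the near-diagonal piece $\int_{t/2}^t$. You list candidate tools and then call the negative-$s$ and endpoint ranges ``the main obstacle''. The tools you name do not cover Condition (C). For $1<p<\infty$ with $d/p'>1$, (C) admits $s<-1$; for example $d=3$, $p=4$ allows $-7/4<s<-1$. In that range:
\begin{itemize}
\item The inequality $|\xi|^{\sigma}\lesssim|\xi-\eta|^{\sigma}+|\eta|^{\sigma}$ behind your Young-type product bound is false for $\sigma<0$. So neither $|\xi|^{s}$ nor $|\xi|^{s+1}$ can be moved onto the factors of $u\otimes u$ that way.
\item Putting $|\xi|^{s+1}$ together with all the $L^{p'}$-integrability on the heat kernel gives $(t-\tau)^{-\frac{d}{2p'}-\frac{s+1}{2}}$. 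This is not integrable at $\tau=t$ once $\frac{d}{2p'}+\frac{s+1}{2}\ge 1$; in the example that happens for $-5/4\le s<-1$.
\end{itemize}
So the one genuinely nonlinear step of the proof is left open.

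\textbf{How the paper closes it.} It uses the Fourier--Herz product estimate of Lemma~\ref{lem;A-P}, which allows negative regularity $s+1>-\min\{d/p,d/p'\}$. It combines this with the bilinear estimate of Lemma~\ref{lem;bil} and the Besov-type decay of Proposition~\ref{thm;decay}, which yields \eqref{est;high}:
$\int_{t/2}^t\|\div(u\otimes u)\|_{\wh{\dot{H}}{}^{s}_{p}}d\tau\lesssim\int_{t/2}^t\tau^{-\frac{d}{2}}\tau^{-\frac{d}{2p'}-\frac{s+1}{2}}d\tau$.

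\textbf{A fix within your framework.} You can also close the step using only \eqref{est;LpL1-re}. On $(t/2,t)$, bound the kernel by $1$ and show
$\||\xi|^{s+1}\wh{u_k u}(\tau)\|_{L^{p'}}\lesssim\tau^{-\frac{d}{2}-\frac{d}{2p'}-\frac{s+1}{2}}$.
\begin{itemize}
\item If $s+1\ge 0$: the triangle inequality for $|\xi|^{s+1}$ and Young's inequality give the bound $\||\nabla|^{s+1}u\|_{\wh{L}^p}\|u\|_{\wh{L}^\infty}$.
\item If $s+1<0$ (possible only for $1<p<\infty$): split at $|\xi|=\tau^{-1/2}$. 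Bound the low frequencies by $\||\xi|^{s+1}\|_{L^{p'}(|\xi|\le\tau^{-1/2})}\|u\|_{\wh{L}^2}^2$, which is finite because $s>-d/p'$. Bound the high frequencies by $\tau^{-\frac{s+1}{2}}\|u\|_{\wh{L}^p}\|u\|_{\wh{L}^\infty}$.
\end{itemize}
Integrating over $(t/2,t)$ gives $t^{-\frac{d}{2p'}-\frac{s+1}{2}-(\frac{d}{2}-1)}$, which is exactly what you need for both $d\ge3$ and $d=2$.
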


\begin{rem}  
{\rm
It is worth emphasizing that the asymptotic formulas \eqref{1st-1} and \eqref{1st-2} exhibit a leading profile essentially different from that in the Fujigaki--Miyakawa type expansion  \eqref{eqn;AP}. Indeed, in the present setting, the leading term is given by the Oseen profile $S_\mu(t) M_0$ with the vector $M_{0}$ which characterizes the low-frequency behavior of the initial velocity $u_0$. This is in sharp contrast to the classical $L^1$-setting, where the mass $M=\int_{\mathbb{R}^{d}}u_{0}(x)dx$ necessarily vanishes for divergence-free initial data, so that the corresponding leading term $G_{\mu}(t)M$ appearing in \eqref{eqn;AP} disappears. However, in the present Fourier--Herz framework, the generalized low-frequency quantity $M_0$ need not vanish, and therefore $S_\mu(t) M_0$ survives as a nontrivial leading-order asymptotic profile.
}
\end{rem}

Next, we would like to present an asymptotic formula related to the second-order asymptotics of the solution. The second asymptotic profile obtained below in higher-dimensions $d\ge3$ involves the same function that also appears in Fujigaki--Miyakawa type asymptotic formula \eqref{eqn;AP}.
\begin{thm}[2nd Order Asymptotics, Higher-dimensional Case] \label{thm;2nd-high} 
Let $d\ge3$ and $1<p \le \infty$. Suppose that the all conditions as in Theorem~\ref{thm;1st-2} are satisfied. 
If we additionally assume $\alpha>1$, then for the global mild solution $u$ to \eqref{eqn;NS}, the following asymptotic formula holds:
\begin{equation}\label{2nd-1}
\lim_{t\to \infty}t^{\frac{d}{2}(1-\frac{1}{p})+\frac{s+1}{2}} 
\left\||\N|^{s}\left(u(t)-S_{\mu}(t)M_{0}+\p_{k}S_{\mu}(t)\int_{0}^{\infty}\int_{\R^{d}}(u_{k}u)(x, t)dxdt\right)\right\|_{\wh{L}^{p}}=0. 
\end{equation}
\end{thm}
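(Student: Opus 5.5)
We work on the Fourier side, since the $\wh{L}^p$ norm is the $L^{p'}$ norm of the Fourier transform. From \eqref{IE},
\[
\wh{u}(\xi,t)=e^{-\mu t|\xi|^2}\wh{u}_0(\xi)-i\int_0^t e^{-\mu(t-\t)|\xi|^2}P(\xi)\,\xi_k\,\wh{(u_k u)}(\xi,\t)\,d\t .
\]
We then split $u(t)-S_\mu(t)M_0+\p_kS_\mu(t)\int_0^\infty\!\int(u_ku)\,dx\,d\t$ into the following pieces:
\begin{align*}
I_1&:=e^{\mu t\Delta}\bigl(u_0-\mathcal{F}^{-1}[P(\xi)M_0]\bigr),\\
I_2&:=-\int_0^{t/2}\bigl(e^{\mu(t-\t)\Delta}-e^{\mu t\Delta}\bigr)\mathcal{P}_\s\div(u\otimes u)(\t)\,d\t,\\
I_3&:=-\int_0^{t/2}e^{\mu t\Delta}\mathcal{P}_\s\div\bigl((u\otimes u)(\t)-\delta_0\textstyle\int (u\otimes u)(\t)dy\bigr)\,d\t,\\
I_4&:=\p_kS_\mu(t)\int_{t/2}^\infty\!\int(u_ku)\,dy\,d\t,\\
I_5&:=-\int_{t/2}^t e^{\mu(t-\t)\Delta}\mathcal{P}_\s\div(u\otimes u)(\t)\,d\t .
\end{align*}
Here $I_3$ is written on the Fourier side as $e^{-\mu t|\xi|^2}P(\xi)\xi_k\bigl(\wh{u_ku}(\xi,\t)-\wh{u_ku}(0,\t)\bigr)$. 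Note that $\wh{u_ku}(0,\t)=(2\pi)^{-d/2}\int u_ku\,dy$ is well defined, because $\wh{u_ku}=(2\pi)^{-d/2}\wh{u}_k*\wh{u}$ is bounded and continuous whenever $u\in\wh{L}^1\cap\wh{L}^\infty$, i.e.\ $\wh{u}\in L^1\cap L^\infty$. The goal is to show that each term is $o\bigl(t^{-\frac d2(1-\frac1p)-\frac{s+1}{2}}\bigr)$ in the $|\N|^s$, $\wh L^p$ seminorm.

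For $I_1$, Assumption (B) with $\al>1$ gives $|\wh{u}_0-PM_0|\les|\xi|^\al$ near $0$. It is bounded elsewhere, since $u_0\in\wh{\dot B}{^0_{1,\infty}}\simeq\wh L^1$, and $PM_0$ is bounded. Hence $\||\xi|^s e^{-\mu t|\xi|^2}(\wh u_0-PM_0)\|_{L^{p'}}\les t^{-\frac d2(1-\frac1p)-\frac{s+\min(\al,\cdot)}{2}}$, using $\min\{|\xi|^\al,1\}\le|\xi|^{\al'}$ with $1<\al'\le\al$; the remaining $|\xi|\gtrsim1$ part decays exponentially. This is $o$ of the target rate, and integrability of $|\xi|^{s+\al'}$ near $0$ in $L^{p'}$ holds under (C).

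For $I_4$ and $I_5$ we use Proposition~\ref{thm;decay}, or rather \eqref{est;LpL1-re} with $p=1$ and $p=\infty$, to bound $\|\wh{u_ku}(\t)\|_{L^\infty}\le\|\wh u(\t)\|_{L^2}^2\les \t^{-d/2}$. This gives $\int_{t/2}^\infty|\wh{u_ku}(0,\t)|\,d\t\les t^{1-d/2}=o(1)$ for $d\ge3$, while $\||\xi|^{s+1}e^{-\mu t|\xi|^2}\|_{L^{p'}}\sim t^{-\frac d2(1-\frac1p)-\frac{s+1}2}$. Hence $I_4$ is fine, and this is exactly where $d\ge3$ enters. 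For $I_5$, on $[t/2,t]$ we put the derivatives $|\xi|^{s+1}$ on the kernel only partially. We estimate $\||\xi|^{s+1}e^{-\mu(t-\t)|\xi|^2}\wh{u_ku}\|_{L^{p'}}$ by splitting $|\xi|^{s+1}\le|\xi|^{\theta}|\xi|^{s+1-\theta}$, with $\theta<2$ going to the heat kernel, which is integrable in $\t$. The rest goes to $\wh{u_ku}$ via Young's inequality, $\||\xi|^{\beta}(\wh u*\wh u)\|_{L^{p'}}\les\||\xi|^\beta\wh u\|_{L^{p'}}\|\wh u\|_{L^1}$, combined with \eqref{est;LpL1-re}. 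This yields the rate $t^{-\frac d2(1-\frac1p)-\frac{s+1}{2}}\cdot t^{1-\frac d2}$, which is again $o$ since $d\ge3$. Condition (C) and $p>1$ are used to keep the low-frequency singular weights integrable, possibly after replacing $|\xi|^{s+1}$ near $0$ by a kernel-only bound.

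The main obstacle is $I_2$ and $I_3$, i.e.\ the $\t\in(0,t/2)$ part. For $I_2$ we use $|e^{-\mu(t-\t)|\xi|^2}-e^{-\mu t|\xi|^2}|\les \t|\xi|^2e^{-\mu t|\xi|^2/2}$. This gives the bound $t^{-\frac d2(1-\frac1p)-\frac{s+1}2}\,t^{-1}\int_0^{t/2}\t\|\wh{u_ku}(\t)\|_{L^\infty}d\t$. Since $\t\|\wh{u_ku}\|_{L^\infty}\les\min(\t\|u\|_{\wh L^\infty}^2,\t^{1-d/2})$ and $u\in L^2(\R_+;\wh L^\infty)$, this integral is $o(t)$ by dominated convergence. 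For $I_3$ we need continuity at $\xi=0$: $\sup_{|\xi|\le\delta}|\wh{u_ku}(\xi,\t)-\wh{u_ku}(0,\t)|\to0$ as $\delta\to0$, for a.e.\ $\t$. This follows because $\wh u(\t)\in L^2$, so translation is continuous in $L^2$ and $|\wh f*\wh g(\xi)-\wh f*\wh g(0)|\le\|\wh f\|_{L^2}\|\wh g(\cdot-\xi)-\wh g\|_{L^2}$. The domination is $\|\wh u(\t)\|_{L^2}^2\les \t^{-d/2}\wedge\|u_0\|$-type bounds, integrable on $(0,\infty)$ for $d\ge3$ (near $0$ via $\|u(\t)\|_{\wh L^2}$ bounded by interpolation). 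We then split the $\xi$-integral into $|\xi|\le\delta$ and $|\xi|>\delta$, where the high part gains $e^{-\mu t\delta^2/2}$. The delicate point is ensuring the $L^2_\xi$ bound of $\wh u$ is integrable near $\t=0$. If it is not, we will instead treat $\t\in(0,1)$ separately using $\|u\|_{L^2(\wh L^\infty)}$ and $\|u\|_{\wh L^1}\les1$, since then $\|\wh{u_ku}\|_{L^\infty}\le\|\wh u\|_{L^1}^{1/2}\cdots$ can be controlled by $\|u\|_{\wh L^1}\|u\|_{\wh L^\infty}$.
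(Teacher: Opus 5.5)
Your proposal is correct and is essentially the paper's proof. The linear remainder is Proposition~\ref{prop-lin-half1-2} with $\alpha>1$, $I_5$ is $\mathcal{N}_2[u]$ from \eqref{est;high}, and $I_2,I_3,I_4$ are the paper's $J_2,J_1,J_3$ with the telescoping done in the opposite order: you put $e^{-\mu t|\xi|^2}$ on $\widehat{u_ku}(\xi,\tau)-\widehat{u_ku}(0,\tau)$ and use dominated convergence for $\sup_{|\xi|\le\delta}|\widehat{u_ku}(\xi,\tau)-\widehat{u_ku}(0,\tau)|$, where the paper uses the rescaling $\xi\mapsto\eta/\sqrt{t-\tau}$. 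Your fallback domination near $\tau=0$ by $\|u(\tau)\|_{\widehat{L}^1}\|u(\tau)\|_{\widehat{L}^\infty}$ is exactly the paper's.

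The one step to make precise is $I_5$ when $s+1<0$. There the weighted Young inequality fails for negative exponents; the paper instead uses the product estimate Lemma~\ref{lem;A-P}. Your ``kernel-only'' fallback does close this case if you keep the heat kernel on $|\xi|\le1$, pairing $\||\xi|^{s+1}e^{-\mu(t-\tau)|\xi|^2}\|_{L^{p'}(|\xi|\le1)}\lesssim\langle t-\tau\rangle^{-\frac{d}{2p'}-\frac{s+1}{2}}$ with $\|\widehat{u_ku}(\tau)\|_{L^\infty}\lesssim\tau^{-\frac d2}$. On $|\xi|>1$, use $|\xi|^{s+1}\le1$ together with $\|\widehat{u}*\widehat{u}\|_{L^{p'}}\le\|\widehat u\|_{L^{p'}}\|\widehat u\|_{L^1}$. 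These give $o(t^{-\frac{d}{2p'}-\frac{s+1}{2}})$ because $d\ge3$ and $s+1<\frac dp$.
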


\begin{rem}  
{\rm

We also note that, in  \eqref{eqn;AP}, the terms involving $\partial_k G_\mu(t)$ and $\partial_k S_\mu(t)$ have the same decay order and thus together form the leading-order asymptotic profile when $M=0$. On the other hand, in our expansion \eqref{2nd-1}, the first and second asymptotic profiles have distinct decay orders, with the latter decaying faster by a factor of $t^{-1/2}$. Thus, \eqref{2nd-1} exhibits a hierarchical asymptotic structure that is different from the Fujigaki--Miyakawa type expansion \eqref{eqn;AP}.
}
\end{rem}

Finally, let us introduce the second-order asymptotic formula of the solution in two-dimension $d=2$. 
In contrast to Carpio's formula \eqref{eqn;AP2} mentioned above, the result below explicitly identifies a nontrivial logarithmic correction as the second-order asymptotic profile. 
\begin{thm}[2nd Order Asymptotics, Two-dimensional Case] \label{thm;2nd-2} 
Let $d=2$ and $1 \le p \le \infty$. Suppose that the all conditions as in Theorem~\ref{thm;1st-2} are satisfied. 
If we additionally assume $\al \ge1$, then for the global mild solution $u$ to \eqref{eqn;NS}, the following asymptotic formula holds:
\begin{equation}\label{2nd-2}
\left\||\N|^{s}\left(u(t)-S_{\mu}(t)M_{0}+(\log t)\,\p_{k}S_{\mu}(t)\mathcal{A}_{k}[M_{0}]\right)\right\|_{\wh{L}^{p}}
\les t^{-\frac{d}{2}(1-\frac{1}{p})-\frac{s+1}{2}}, \ \ t\ge2, 
\end{equation}
where the vector $\mathcal{A}_{k}[M_{0}]$ is defined by 
\begin{equation}\label{DEF-A}
\mathcal{A}_{k}[M_{0}]:=\frac{\pi}{16\mu}\left(2M_{0}^{k}M_{0}+|M_{0}|^{2}e_{k}\right). 
\end{equation}
\end{thm}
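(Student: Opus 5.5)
The plan is to split the mild solution \eqref{IE} into a linear part and a Duhamel part, and to extract the logarithm from the Duhamel integral with the nonlinearity frozen at the first profile $U(t):=S_\mu(t)M_0$. We write
\[
u(t)-S_\mu(t)M_0=\bigl(e^{\mu t\Delta}u_0-S_\mu(t)M_0\bigr)-\int_0^t e^{\mu(t-\t)\Delta}\mathcal P_\s\div(u\otimes u)(\t)\,d\t .
\]
All estimates will be done on the Fourier side in $\wh L^p$, i.e.\ in $L^{p'}_\xi$ with weight $|\xi|^s$. We repeatedly use the scaling identity
\[
\bigl\||\xi|^{s+m}e^{-c t|\xi|^2}\bigr\|_{L^{p'}_\xi}\simeq t^{-\frac d2(1-\frac1p)-\frac{s+m}{2}},
\]
which is valid under condition (C). For the linear part, the Fourier transform is $e^{-\mu t|\xi|^2}\bigl(\wh u_0(\xi)-P(\xi)M_0\bigr)$. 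We split at $|\xi|=1$. Near the origin, assumption (B) with $\al\ge1$ gives a bound $|\xi|^{\min\{\al,1\}}\le |\xi|$, hence a contribution $O(t^{-\frac d2(1-\frac1p)-\frac{s+1}{2}})$. For $|\xi|\ge1$, $\wh u_0\in L^\infty$ together with the Gaussian factor gives exponential decay.

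For the Duhamel term we first remove the difference $u\otimes u-U\otimes U$ on $\t\ge1$. By Theorem~\ref{thm;1st-2} (with $d=2$, $\al\ge1$) and Proposition~\ref{thm;decay}, $\|\wh{u\otimes u}(\t)-\wh{U\otimes U}(\t)\|_{L^\infty_\xi}\le \|\wh u-\wh U\|_{L^2}\bigl(\|\wh u\|_{L^2}+\|\wh U\|_{L^2}\bigr)\les \t^{-3/2}\log(1+\t)$. This is integrable in $\t$. Splitting $[1,t/2]\cup[t/2,t]$, the first piece is bounded through the symbol $|\xi|^{s+1}e^{-\mu(t-\t)|\xi|^2}$, giving $t^{-\frac d2(1-\frac1p)-\frac{s+1}{2}}$. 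On $[t/2,t]$ we use the faster decay of the integrand, moving derivatives onto the factors in the same way as in the proof of Proposition~\ref{thm;decay}. The piece $\t\in[0,1]$ is treated with $u$ itself. Since $\|\wh{(u\otimes u)}(\t)\|_{L^\infty_\xi}\le\|u(\t)\|_{\wh L^2}^2$ and $u\in L^2(\R_+;\wh L^\infty)$ holds by Proposition~\ref{thm;GWP}, interpolation with $\wh L^1$ gives $\int_0^1\|u\|_{\wh L^2}^2\,d\t<\infty$. This piece is therefore a kernel $\p_k G_\mu(t-\t)$ acting on an $L^\infty_\xi$ datum, and it decays at the required rate $t^{-\frac d2(1-\frac1p)-\frac{s+1}{2}}$.

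The main term is $\int_1^{t/2}e^{\mu(t-\t)\Delta}\mathcal P_\s\div(U\otimes U)(\t)\,d\t$, plus the $[t/2,t]$ piece, which is harmless by the same argument as above. Its symbol is $i\,e^{-\mu(t-\t)|\xi|^2}P(\xi)\xi_k\,\wh{(U_kU)}(\xi,\t)$. By self-similarity, $\wh{(U_kU)}(\xi,\t)=\t^{-1}\Phi_k(\sqrt\t\,\xi)$ with $\Phi_k$ smooth and rapidly decaying. We Taylor-expand twice. First, $\wh{(U_kU)}(\xi,\t)=\t^{-1}\Phi_k(0)+O(\t^{-1/2}|\xi|)$. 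Second, $e^{-\mu(t-\t)|\xi|^2}=e^{-\mu t|\xi|^2}+O(\t|\xi|^2e^{-\mu t|\xi|^2/2})$. Both remainders contribute $t^{-\frac d2(1-\frac1p)-\frac{s+2}{2}}\int_1^{t/2}\t^{-1/2}\,d\t$ or better, which is acceptable. The leading part equals $(\log(t/2))\,\p_kS_\mu(t)\,(2\pi)^{-1}\Phi_k(0)$, up to the normalisation constant of $\F$, and the $\log 2$ error is absorbed.

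It remains to identify $(2\pi)^{-1}\Phi_k(0)=\int_{\R^2}(U_kU)(x,1)\,dx$ with $\mathcal A_k[M_0]$. By Plancherel this integral equals $\int e^{-2\mu|\xi|^2}(P(\xi)M_0)_k\,P(\xi)M_0\,d\xi$. Passing to polar coordinates, the radial integral is $\int_0^\infty re^{-2\mu r^2}dr=1/(4\mu)$. The angular average over $\omega\in S^1$ of $(M_0^k-\omega_k\,\omega\!\cdot\! M_0)(M_0-\omega\,\omega\!\cdot\! M_0)$ is computed from the moments $\overline{\omega_i\omega_j}=\frac12\delta_{ij}$ and $\overline{\omega_i\omega_j\omega_l\omega_m}=\frac18(\delta\delta+\delta\delta+\delta\delta)$. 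This yields the combination $2M_0^kM_0+|M_0|^2e_k$ up to the projection $P$ in front; the term killed by $P(\xi)\xi_k$ contracted is irrelevant. The overall constant should then be checked against $\pi/(16\mu)$.

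I expect the main obstacle to be the bookkeeping in $\wh L^p$ for the $\t$-integrals over $[t/2,t]$ and near $\t=0$ at the endpoints $p=1,\infty$. The precise constant in the angular computation also needs careful checking.
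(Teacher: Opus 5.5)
Your architecture matches the paper's proof of Proposition~\ref{prop-N-asymp-2}. You use the same split of the Duhamel integral over $[0,1]$, $[1,t/2]$, $[t/2,t]$, and the same replacement of $u\otimes u$ by $U\otimes U$ on $[1,t/2]$, with the integrable error $\tau^{-3/2}\log(1+\tau)$ from Theorem~\ref{thm;1st-2}. Your two first-order expansions are the paper's $Z$ and $W_2$. On $[t/2,t]$ you need not subtract $U\otimes U$: \eqref{est;high} bounds the full $u\otimes u$ there, and its extra factor $t^{-(d/2-1)}$ equals $1$ for $d=2$.

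The gap is in the one genuinely two-dimensional step. $\Phi_k=\wh{U_kU}(\cdot,1)$ is not smooth at the origin. The 2D Oseen kernel decays only like $|x|^{-2}$, so $(U_kU)(\cdot,1)=O(|x|^{-4})$ has a finite first moment but a logarithmically divergent second moment. Consequently $\Phi_k$ is in general not $C^2$ at $0$. The bound you actually use, $|\Phi_k(\eta)-\Phi_k(0)|\les|\eta|$, is true, but it is precisely what needs proof, and your stated reason for it is false. The paper supplies it as Lemma~\ref{lem;S}, $\int_{\R^2}|x||S_\mu(x,\tau)|^2\,dx\les\tau^{-1/2}$, which it derives from the pointwise bound $|S_\mu(y,1)|\les\min\{1,|y|^{-2}\}$. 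A short Fourier-side alternative also works. $\Phi_k$ is a constant multiple of the convolution of $e^{-\mu|\eta|^2}(P(\eta)M_0)_k$ with $e^{-\mu|\eta|^2}P(\eta)M_0$. The gradient of the second factor is $O(|\eta|^{-1})$ near $0$ and Gaussian at infinity, hence in $L^1(\R^2)$. Therefore $\nabla\Phi_k\in L^\infty$.

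Second, the explicit constant is part of the statement, and you leave it unchecked with inconsistent normalisations. Take the paper's unitary $\F$ and $e^{\mu t\Delta}$ defined as the multiplier $e^{-\mu t|\xi|^2}$. Then the leading symbol is $\log(t/2)\,i\xi_k e^{-\mu t|\xi|^2}P(\xi)\Phi_k(0)$. So the coefficient of $\log(t/2)\,\p_kS_\mu(t)$ is $\Phi_k(0)$ itself, with no factor $(2\pi)^{-1}$, and $\Phi_k(0)=(2\pi)^{-1}\int_{\R^2}(U_kU)(x,1)\,dx$, not $2\pi$ times it.

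Your angular computation, once finished, reproduces \eqref{int-A}: $\int_{\R^2}(U_kU)(x,1)\,dx=\frac{\pi}{16\mu}(2M_0^kM_0+|M_0|^2e_k)=\mathcal A_k[M_0]$. Your remark about the $|M_0|^2e_k$ part is correct: $\xi_kP(\xi)e_k=P(\xi)\xi=0$ removes it from $\p_kS_\mu(t)\mathcal A_k[M_0]$.

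The paper identifies the coefficient with $\int U_kU\,dx$ by writing the Duhamel symbol in \eqref{DEF-RL} with a factor $(2\pi)^{d/2}$ in front of $\F[u_ku]$. Read literally, the multiplier definition of the semigroup carries no such factor and gives $(2\pi)^{-1}\mathcal A_k[M_0]$. The value $\pi/(16\mu)$ is what one gets with the convention $\wh f=(2\pi)^{-d}\int e^{-ix\cdot\xi}f\,dx$, for which $\wh{fg}=\wh f*\wh g$. A factor $2\pi$ in front of the $\log t$ term changes the statement, so ``up to the normalisation constant of $\F$'' cannot be waved away. You need to fix one convention for $\F$, Assumption (A) and $S_\mu$ and carry it through to the constant.
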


\begin{rem}  
{\rm
In view of the second asymptotic expansion, we are able to obtain the optimal asymptotic rate to the first asymptotic profile $S_{\mu}(t)M_{0}$ by using Theorems \ref{thm;2nd-high} and \ref{thm;2nd-2}. Especially, under the assumptions in Theorem \ref{thm;2nd-2}, the solution $u(x,t)$ satisfies the following result:
  \begin{equation*}
   \left\||\N|^{s}\left(u(t)-S_{\mu}(t)M_{0}\right)\right\|_{\wh{L}^{p}}
   =\left(C_*+o(1)\right)t^{-\frac{d}{2}\left(1-\frac{1}{p}\right)-\frac{s+1}{2}}\log t \quad (t\to \infty)
  \end{equation*}
  for any $1\le p \le \infty$, where $C_*:=\left\||\N|^{s}\p_{k}S_{\mu}(1)\mathcal{A}_{k}[M_{0}]\right\|_{\wh{L}^{p}}\neq0$. 
  Therefore, the optimal convergence rate to $S_{\mu}(t)M_{0}$ is $O(t^{-(d/2)(1-1/p)-(s+1)/2})\log t$, which in particular shows that the logarithmic factor appearing in \eqref{1st-2} with $d=2$ is essential. The logarithmic correction in the second asymptotic profile reflects the critical nature of the quadratic nonlinearity $u\otimes u$ with respect to time integrability in two space dimension. It is worth noting that analogous logarithmic corrections have been observed in related parabolic equations with similar nonlinear structures, such as convection-diffusion equation, generalized Burgers equation, and parabolic systems of chemotaxis; see, for example, \cite{EZ91, KM07, KM09, NY07, Z93} and the references therein. Our result shows that such a logarithmic correction also arises for the two-dimensional Navier--Stokes equations in the Fourier--Herz framework.
 }
\end{rem}

The rest of this paper is organized as follows. 
First, in Section~3, we prepare some basic results related to the elemental properties and inequalities on the Fourier--Herz spaces. 
Next, we give some asymptotic formulas for the linear solution $e^{\mu t \Delta}u_{0}$ in Section~4. The proofs of the main results are given in Section~5. 
This section is divided into two subsections below. Subsection~5.1 is devoted to deriving the first asymptotic profile of the solution $u$ to \eqref{eqn;NS}. 
We prove Theorems~\ref{thm;1st-1} and \ref{thm;1st-2} in this subsection. Finally, the second terms of asymptotics are obtained in Subsection~5.2. 
Namely, the proofs of Theorems~\ref{thm;2nd-high} and \ref{thm;2nd-2} at the end of this paper. 
One of the main novelty of this paper lies in Theorem~\ref{thm;2nd-2}, whose most significant contribution is the derivation of the second asymptotic profile of the solution $u$ in two-dimension $d=2$. The asymptotic profile obtained here is essentially different from that of the Fujigaki--Miyagawa type asymptotic expansion.

\section{Preliminaries}  

\indent

In this section, we would like to introduce some preliminary results that play an important role in the proofs of our main results. 
First, let us describe the definition of the Fourier--Sobolev spaces $\wh{\dot{H}}{_p^s}=\wh{\dot{H}}{_p^s}(\R^d)$ and the norm equivalence between these spaces and the Fourier--Herz spaces defined above. For the proof of the following lemma, see e.g. \cite{C18,KY11}. 
\begin{lem}[\cite{C18,KY11}] \label{lem;equi}
Let $d \ge 1$, $1 \le p \le \infty$ and $s \in \R$. Then, the fact $\fB{_{p,p'}^s}(\R^d) \simeq \wh{\dot{H}}{_p^s}(\R^d)$ holds, 
in the sense of the norm equivalence. 
Here, the Fourier--Sobolev spaces $\wh{\dot{H}}{_p^s}=\wh{\dot{H}}{_p^s}(\R^d)$ is defined by 
\[
  \wh{\dot{H}}{_p^s}(\R^d)
  :=\left\{f \in \mathcal{S}'(\mathbb{R}^d) : \wh{f} \in L^1_{loc}(\R^d),\|f\|_{\wh{\dot{H}}{_p^s}}<\infty\right\} \quad \text{with} \quad 
  \|f\|_{\wh{\dot{H}}{_p^s}}:=\||\N|^sf\|_{\wh{L}^p}. 
\]
\end{lem}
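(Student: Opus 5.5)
The plan is to prove the equivalence $\fB{_{p,p'}^s}\simeq\wh{\dot{H}}{_p^s}$ directly on the Fourier side. Set $g:=|\xi|^s\wh f$, so that $\|f\|_{\wh{\dot H}{_p^s}}=\|g\|_{L^{p'}}$ and
\[
\|f\|_{\fB{_{p,p'}^s}}=\Bigl\|\bigl\{2^{sj}\|\wh\phi_j\wh f\|_{L^{p'}}\bigr\}_{j}\Bigr\|_{\ell^{p'}}.
\]
On $\supp\wh\phi_j\subset\{2^{j-1}\le|\xi|\le2^{j+1}\}$ we have $2^{sj}\simeq|\xi|^s$, with constants depending only on $s$. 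Hence
\[
2^{sj}\|\wh\phi_j\wh f\|_{L^{p'}}\simeq\|\wh\phi_j\, g\|_{L^{p'}}.
\]
This reduces the statement to showing that $\|\{\|\wh\phi_j g\|_{L^{p'}}\}\|_{\ell^{p'}}\simeq\|g\|_{L^{p'}}$ for every $g\in L^1_{loc}(\R^d\setminus\{0\})$.

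\textbf{Case $p'<\infty$.} Write
\[
\sum_j\|\wh\phi_j g\|_{L^{p'}}^{p'}=\int_{\R^d}\Bigl(\sum_j\wh\phi_j(\xi)^{p'}\Bigr)|g(\xi)|^{p'}\,d\xi .
\]
It then suffices to bound $\Phi(\xi):=\sum_j\wh\phi_j(\xi)^{p'}$ above and below by positive constants for $\xi\ne0$.

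For the upper bound, at most three of the $\wh\phi_j$ are nonzero at any point and $0\le\wh\phi_j\le1$ (since $\sum_j\wh\phi_j=1$ and $\wh\phi\ge0$), so $\Phi\le3$.

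For the lower bound, among those at most three nonzero values summing to $1$, one is at least $1/3$, so $\Phi\ge3^{-p'}$.

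\textbf{Case $p'=\infty$.} The norm is $\sup_j\|\wh\phi_j g\|_{L^\infty}$, which is clearly at most $\|g\|_{L^\infty}$. Conversely, for a.e.\ $\xi$ some $j$ has $\wh\phi_j(\xi)\ge1/3$, so $|g(\xi)|\le3\sup_j\|\wh\phi_j g\|_{L^\infty}$.

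\textbf{Remaining point.} The only delicate issue is the admissible class of $f$. Both spaces require $\wh f\in L^1_{loc}$, so the origin is a null set and $\wh f$ is determined a.e.\ by its restriction to $\R^d\setminus\{0\}$; no polynomial ambiguity needs to be factored out. The norm identities above therefore transfer directly, and both spaces are defined by the same membership conditions, which gives the norm equivalence.

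I expect no real obstacle. The only point needing care is the lower bound on $\Phi$, i.e.\ the finite overlap of the dyadic supports, which is handled by the pigeonhole argument above.
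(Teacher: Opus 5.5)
Your proof is correct. The paper gives no proof of this lemma; it only refers to \cite{C18,KY11}, so there is no in-text argument to compare against. What you wrote is the standard direct argument, and it makes the lemma self-contained with explicit constants. Termwise, $2^{sj}\|\widehat{\phi}_j\widehat{f}\|_{L^{p'}}$ and $\|\widehat{\phi}_j|\xi|^s\widehat{f}\|_{L^{p'}}$ agree up to factors $2^{\pm|s|}$. After Tonelli, everything reduces to a two-sided pointwise bound on $\sum_j\widehat{\phi}_j^{\,p'}$ away from the origin, with a separate supremum argument for $p=1$.

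Three small remarks.

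First, the paper's phrase ``non-negative radially symmetric function $\phi$'' can only mean $\widehat{\phi}\ge 0$. A nonnegative $\phi\not\equiv0$ would give $\widehat{\phi}(0)>0$, contradicting the annular support. This is how you used it. Nonnegativity is not actually needed, though. Three reals summing to $1$ always include one that is $\ge 1/3$, and $|\widehat{\phi}_j|\le\|\widehat{\phi}\|_{L^\infty}$ gives the upper bound.

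Second, with nonnegativity you may sharpen $\Phi\le 3$ to $\Phi\le\sum_j\widehat{\phi}_j=1$, since $p'\ge1$. For $p=\infty$ (so $p'=1$) the identity $\sum_j\|\widehat{\phi}_j g\|_{L^1}=\|g\|_{L^1}$ is exact.

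Third, in the $p=1$ case the ``a.e.'' step should discard $\{0\}$ together with the countable union of the null sets on which $|\widehat{\phi}_j g|\le\|\widehat{\phi}_j g\|_{L^\infty}$ fails. This is harmless, but worth stating.
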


The second topic is the product estimate in the Fourier--Herz spaces. 
Here, let us state the standard bilinear estimate without their proof (for the general statement and their proof, see e.g. Lemma~2.4 in \cite{MNO21}).  
\begin{lem}[Bilinear estimate, \cite{MNO21}] \label{lem;bil}
Let $s > 0$ and $1 \le p,\s \le \infty$. 
Then, there exists a positive constant $C > 0$ such that the following estimate holds: 
\begin{equation*} 
\|fg\|_{\fB{_{p,\s}^s}}
\le 
C\left(\|f\|_{\wh{L}^\infty}\|g\|_{\fB{^s_{p,\s}}} 
+ \|f\|_{\fB{^s_{p,\s}}}\|g\|_{\wh{L}^\infty}\right). 
\end{equation*}
\end{lem}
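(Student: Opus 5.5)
The plan is to use Bony's paraproduct decomposition, estimating every block on the Fourier side by Young's convolution inequality. The basic observation is that, since $\wh{fg}=(2\pi)^{-d/2}\wh f*\wh g$,
\begin{equation*}
\|fg\|_{\wh L^p}=(2\pi)^{-\frac d2}\|\wh f*\wh g\|_{L^{p'}}\le (2\pi)^{-\frac d2}\|\wh f\|_{L^1}\|\wh g\|_{L^{p'}}=(2\pi)^{-\frac d2}\|f\|_{\wh L^\infty}\|g\|_{\wh L^p}.
\end{equation*}
A second convenient fact is that any Fourier multiplier with an $L^\infty$ symbol is bounded on every $\wh L^q$, with operator norm at most the sup of the symbol. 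In particular $\dot\Delta_j$ and $\dot S_j:=\sum_{k\le j-1}\dot\Delta_k$ are uniformly bounded on $\wh L^\infty$ and on $\wh L^p$. So the usual Bernstein-type considerations are unnecessary, and only the frequency-support bookkeeping matters.

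\textbf{Decomposition.} We write
\begin{equation*}
fg=T_fg+T_gf+R(f,g),\qquad T_fg=\sum_k \dot S_{k-1}f\,\dot\Delta_k g,\qquad R(f,g)=\sum_{|k-k'|\le1}\dot\Delta_k f\,\dot\Delta_{k'}g.
\end{equation*}
This identity is justified because $f\in\wh L^\infty$ means $\wh f\in L^1$, so $\sum_k\wh\phi_k\wh f\to\wh f$ in $L^1$. The $\wh{\dot B}{}^s_{p,\s}$ factor is handled in $L^1_{loc}$ on the Fourier side, and the convolution of the partial sums converges, so the series converge to $fg$ in $\mathcal S'$ with $\wh{fg}\in L^1_{loc}$.

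\textbf{Paraproducts.} The spectrum of $\dot S_{k-1}f\,\dot\Delta_kg$ lies in an annulus of size $\sim2^k$. Hence $\dot\Delta_j(T_fg)=\sum_{|k-j|\le N}\dot\Delta_j(\dot S_{k-1}f\,\dot\Delta_kg)$ for a fixed $N$ (say $N=4$). By the two observations above,
\begin{equation*}
2^{sj}\|\dot\Delta_j T_fg\|_{\wh L^p}\lesssim \|f\|_{\wh L^\infty}\sum_{|k-j|\le N}2^{sk}\|\dot\Delta_kg\|_{\wh L^p}.
\end{equation*}
Taking the $\ell^\s$ norm in $j$ gives $\|T_fg\|_{\wh{\dot B}{}^s_{p,\s}}\lesssim\|f\|_{\wh L^\infty}\|g\|_{\wh{\dot B}{}^s_{p,\s}}$, and this step works for every $s\in\R$. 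Exchanging the roles of $f$ and $g$ gives the same bound for $T_gf$ with $f$ and $g$ swapped.

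\textbf{Remainder.} This is the only step needing $s>0$, and it is the main (though standard) obstacle. Each summand $\dot\Delta_kf\,\dot\Delta_{k'}g$ with $|k-k'|\le1$ has spectrum only in a ball of radius $\sim2^{k}$, so $\dot\Delta_jR=\sum_{k\ge j-N}\dot\Delta_j(\dot\Delta_kf\,\wt\Delta_kg)$, where $\wt\Delta_k=\sum_{|k'-k|\le1}\dot\Delta_{k'}$. This yields
\begin{equation*}
2^{sj}\|\dot\Delta_jR\|_{\wh L^p}\lesssim\|f\|_{\wh L^\infty}\sum_{k\ge j-N}2^{-s(k-j)}\,2^{sk}\|\wt\Delta_kg\|_{\wh L^p}.
\end{equation*}
The right-hand side is a discrete convolution of the sequence $\{2^{sk}\|\wt\Delta_kg\|_{\wh L^p}\}_k$ with the kernel $\{2^{sm}\mathbf 1_{m\le N}\}_m$. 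This kernel belongs to $\ell^1(\Z)$ exactly because $s>0$. Young's inequality on $\ell^\s$ then gives $\|R(f,g)\|_{\wh{\dot B}{}^s_{p,\s}}\lesssim\|f\|_{\wh L^\infty}\|g\|_{\wh{\dot B}{}^s_{p,\s}}$, with a constant blowing up like $(1-2^{-s})^{-1}$ as $s\downarrow0$.

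Summing the three bounds proves the lemma.
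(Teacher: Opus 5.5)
The paper gives no proof of this lemma; it is quoted from the literature without proof. Your Bony-paraproduct argument, with Young's convolution inequality on the Fourier side in place of H\"older and Bernstein, is the standard route and is sound, apart from one index slip that needs fixing.

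\textbf{The slip.} With the paper's normalization $\operatorname{supp}\widehat{\phi}\subset\{1/2\le|\xi|\le 2\}$, the factor $\dot S_{k-1}f=\sum_{m\le k-2}\dot\Delta_m f$ has spectrum in $\{|\xi|\le 2^{k-1}\}$. Meanwhile $\dot\Delta_k g$ has spectrum in $\{2^{k-1}\le|\xi|\le 2^{k+1}\}$. The two supports touch, so the spectrum of $\dot S_{k-1}f\,\dot\Delta_k g$ is only contained in the ball $\{|\xi|\le \frac52 2^k\}$, not in an annulus. For the standard choice $\widehat\phi(\xi)=\psi(\xi)-\psi(2\xi)$ it really does reach the origin. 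Your reduction $\dot\Delta_j(T_fg)=\sum_{|k-j|\le N}\dot\Delta_j(\dot S_{k-1}f\,\dot\Delta_kg)$ is therefore unjustified as written, since large $k$ can feed small $j$.

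\textbf{Two easy fixes.}
\begin{itemize}
\item Shift the cutoff: set $T_fg:=\sum_k \dot S_{k-2}f\,\dot\Delta_k g$. Each summand then has spectrum in the annulus $\{2^{k-2}\le|\xi|\le\frac94 2^k\}$, so only $|k-j|\le 3$ contributes. The remainder is then taken over $|k-k'|\le 2$.
\item Alternatively, since $s>0$ is assumed anyway, estimate $T_fg$ and $T_gf$ exactly as you estimate $R(f,g)$. This uses only the ball property, which gives $k\ge j-2$, and the $\ell^1$ kernel $\{2^{sm}\mathbf{1}_{m\le 2}\}_m$.
\end{itemize}
With either fix, the rest of your argument goes through verbatim.

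\textbf{Justifying the decomposition.} This is cleanest as follows: the right-hand side is finite only if $\widehat f,\widehat g\in L^1$, and $\sum_k|\widehat{\phi}_k|\lesssim 1$. Hence the double series $\sum_{k,k'}(\widehat{\phi}_k\widehat f)*(\widehat{\phi}_{k'}\widehat g)$ converges absolutely in $L^1$. That justifies every rearrangement and the commuting of $\dot\Delta_j$ with the sums.
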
 

Next, we would like to mention the product estimate in the Fourier--Herz spaces. 
The result corresponding to the following estimate in the Besov spaces is well-known as in \cite{BCD11}. 
Some similar estimates in the Fourier--Herz spaces are established in \cite{N-pre} (see, also \cite{N22}). 
\begin{lem}[Product estimate, \cite{N-pre, N22}] \label{lem;A-P}
Let $d \ge 1$ and $1 \le p,\s \le \infty$.  
If $s \in \R$ satisfies $|s|<d/p$ for $2 \le p$ and $-d/p'<s<d/p$ for $1\le p<2$, 
then there exists a positive constant $C>0$ such that the following estimate holds: 
\begin{equation*} 
\|fg\|_{\fB{_{p,\s}^{s}}}
\le C\|f\|_{\fB{_{p,\s}^s}} 
     \|g\|_{\wh{L}^{\infty}\cap\fB_{p,\infty}^\frac{d}{p}}.  
\end{equation*}
\end{lem}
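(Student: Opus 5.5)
We prove the product estimate of Lemma~\ref{lem;A-P} by a Bony paraproduct decomposition adapted to the Fourier--Lebesgue setting. The basic tool is the elementary Young-type inequality on the Fourier side,
\[
\|fg\|_{\wh{L}^p}\les \|f\|_{\wh{L}^p}\|g\|_{\wh{L}^\infty},
\]
which follows from $\wh{fg}=(2\pi)^{-d/2}\wh f*\wh g$ and Young's inequality $L^{p'}*L^1\subset L^{p'}$. We also use a Bernstein-type inequality on dyadic blocks:
\[
\|\dot\Delta_j f\|_{\wh{L}^\infty}\les 2^{jd/p}\|\dot\Delta_j f\|_{\wh{L}^p},
\]
which is H\"older's inequality on the annulus $|\xi|\sim 2^j$, since $\|\wh{\dot\Delta_j f}\|_{L^1}\le |\{|\xi|\sim2^j\}|^{1/p}\|\wh{\dot\Delta_j f}\|_{L^{p'}}$.

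We write $fg=T_fg+T_gf+R(f,g)$, where
\[
T_fg=\sum_j \dot S_{j-1}f\,\dot\Delta_j g,\qquad \dot S_{j-1}:=\sum_{k\le j-2}\dot\Delta_k,\qquad R(f,g)=\sum_{|j-k|\le1}\dot\Delta_j f\,\dot\Delta_k g.
\]
Each term is estimated separately.

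\emph{The term $T_gf$.} The support of $\wh{\dot S_{j-1}g\,\dot\Delta_j f}$ lies in an annulus $\sim 2^j$. Since $\|\dot S_{j-1}g\|_{\wh L^\infty}\les\|g\|_{\wh L^\infty}$, we get $2^{sj}\|\dot\Delta_{j'}(T_gf)\|_{\wh L^p}\les \|g\|_{\wh L^\infty}\sum_{|j-j'|\le 4}2^{sj}\|\dot\Delta_j f\|_{\wh L^p}$. Taking the $\ell^\s$ norm gives the bound $\|g\|_{\wh L^\infty}\|f\|_{\fB^s_{p,\s}}$, valid for all $s$.

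\emph{The term $T_fg$.} Here $\dot S_{j-1}f$ is low frequency. The blocks with $s<d/p$ are handled by writing
\[
\|\dot S_{j-1}f\,\dot\Delta_jg\|_{\wh L^p}\le \|\dot S_{j-1}f\|_{\wh L^\infty}\|\dot\Delta_j g\|_{\wh L^p}\les \sum_{k\le j-2}2^{kd/p}\|\dot\Delta_kf\|_{\wh L^p}\,\|\dot\Delta_jg\|_{\wh L^p},
\]
and $\|\dot\Delta_jg\|_{\wh L^p}\le 2^{-jd/p}\|g\|_{\fB^{d/p}_{p,\infty}}$. Multiplying by $2^{sj}$ yields
\[
\sum_{k\le j-2}2^{(j-k)(s-d/p)}\,2^{sk}\|\dot\Delta_kf\|_{\wh L^p}.
\]
Since $s<d/p$ the kernel is summable, and Young's inequality in $\ell^\s$ gives the bound $\|f\|_{\fB^s_{p,\s}}\|g\|_{\fB^{d/p}_{p,\infty}}$. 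This is where the condition $s<d/p$ enters.

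\emph{The remainder $R(f,g)$.} This is the main obstacle, because the frequencies of $\dot\Delta_jf\,\dot\Delta_{k}g$ fill a ball $|\xi|\les 2^j$, so $\dot\Delta_{j'}R$ collects all $j\ge j'-3$. The $\wh L^\infty$ bound alone would require $s>0$. To reach negative $s$, we exploit the $\fB^{d/p}_{p,\infty}$ norm of $g$ and a Bernstein inequality at the output frequency. For $p\ge2$ we use the following chain:
\[
\|\dot\Delta_{j'}(\dot\Delta_jf\,\dot\Delta_kg)\|_{\wh L^p}\les 2^{j'd(\frac1{p'}-\frac1{r'})}\|\wh{\dot\Delta_jf}*\wh{\dot\Delta_kg}\|_{L^{r'}} ,
\]
with $r'$ chosen so that Young gives $L^{p'}*L^{p'}\subset L^{r'}$, that is $1/r'=2/p'-1$, which is admissible when $p\ge 2$. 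This yields a gain $2^{j'd/p}$ and a loss of nothing, because $\|\dot\Delta_kg\|_{\wh L^p}\les2^{-kd/p}\|g\|_{\fB^{d/p}_{p,\infty}}$. Multiplying by $2^{sj'}$ gives
\[
\sum_{j\ge j'-3}2^{(j'-j)(s+d/p)}\,2^{sj}\|\dot\Delta_jf\|_{\wh L^p}\,\|g\|_{\fB^{d/p}_{p,\infty}},
\]
which is summable iff $s>-d/p$. For $1\le p<2$ the Young step is unavailable. Instead we put $\dot\Delta_kg$ in $\wh L^\infty$ via Bernstein and $\dot\Delta_j f$ in $\wh L^p$, obtaining an element of $\wh L^1$ on the Fourier side (i.e. $\wh{\ }\in L^{p'}*L^1$, whose convolution is controlled in $L^{p'}$ only, so we use instead $L^{p'}*L^{p}\subset L^\infty$). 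Then the Bernstein on the ball $|\xi|\les2^{j'}$ from $L^\infty$ to $L^{p'}$ gains $2^{j'd/p'}$, and the resulting series $\sum_{j\ge j'}2^{(j'-j)(s+d/p')}$ converges exactly when $s>-d/p'$.

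Summing the three pieces gives the claimed estimate. The bookkeeping of exponents in the remainder term is the step I expect to require the most care, since that is where the two different lower bounds for $s$ arise.
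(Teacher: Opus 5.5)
The paper gives no proof of this lemma: it is quoted from \cite{N-pre, N22}, and the Besov-space analogue is attributed to \cite{BCD11}. So there is no in-paper argument to compare against.

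Your Bony paraproduct argument is the standard route for estimates of this type, and it is correct:
\begin{itemize}
\item $T_gf$ costs only $\|g\|_{\wh L^\infty}$.
\item $T_fg$ is where the condition $s<d/p$ enters.
\item In $R(f,g)$, for $p\ge2$, H\"older on the output annulus combined with $L^{p'}*L^{p'}\subset L^{r'}$, $1/r'=2/p'-1$, gives the factor $2^{j'd/p}$ and hence the condition $s>-d/p$.
\item In $R(f,g)$, for $p<2$, $L^{p'}*L^{p}\subset L^\infty$ plus H\"older on the output annulus gives $2^{j'd/p'}$ and hence $s>-d/p'$.
\end{itemize}
This reproduces exactly the ranges in the statement.

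Two points should be tidied up.

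\textbf{The $p<2$ remainder.} Your sentence about putting $\dot\Delta_kg$ in $\wh L^\infty$ is not what the argument uses. The estimate you actually need is
\[
\|\wh{\dot\Delta_k g}\|_{L^p}\lesssim 2^{kd(\frac2p-1)}\|\wh{\dot\Delta_kg}\|_{L^{p'}}\lesssim 2^{-kd/p'}\|g\|_{\fB^{d/p}_{p,\infty}}.
\]
This is H\"older on the annulus $|\xi|\sim 2^k$, and it is valid precisely because $p\le 2\le p'$. State it explicitly, since it is the source of the exponent $d/p'$ and the reason the $p\ge2$ and $p<2$ regimes differ.

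\textbf{The support of the paraproduct pieces.} With the paper's normalization (Fourier support of $\wh\phi$ in $\{1/2\le|\xi|\le 2\}$), the multiplier of $\dot S_{j-1}=\sum_{k\le j-2}\dot\Delta_k$ is supported in $|\xi|\le 2^{j-1}$. This touches the inner edge of the support of $\wh\phi_j$, so $\wh{\dot S_{j-1}g\,\dot\Delta_jf}$ is supported in a ball, not an annulus. In that case $\dot\Delta_{j'}T_gf$ would collect all $j\ge j'-c$, and the bound would only sum for $s>0$.

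The fix is to take a larger gap, for example $\sum_{k\le j-3}\dot\Delta_k$, whose support lies in $|\xi|\le 2^{j-2}$. The products are then supported in $2^{j-2}\le|\xi|\le 2^{j+2}$, and the remainder runs over $|j-k|\le 2$. With that change, all three pieces go through as you wrote them.
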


Finally, we need to prepare a weighted estimate for the Oseen kernel $S_\mu(x,t)$ in two-dimensional case of $d=2$. The following estimate \eqref{S-west} will be used in the proof of Proposition~\ref{prop-N-asymp-2} below. 
\begin{lem}\label{lem;S}
Let $d=2$. Then, for the Oseen kernel $S_\mu(x,t)$ defined by \eqref{DEF-Stokes}, we have 
\begin{equation}\label{S-west}
\int_{\R^{2}}|x|\left|S_{\mu}(x, t)\right|^{2}dx \les t^{-\frac{1}{2}}, \ \ t>0. 
\end{equation}
\end{lem}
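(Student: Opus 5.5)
The plan is to use the self-similar structure of the Oseen kernel. By definition, $\widehat{S_\mu}(\xi,t)=(2\pi)^{-1}e^{-\mu t|\xi|^2}P(\xi)$ (with the paper's normalization, $d=2$). Since $P$ is homogeneous of degree zero, we get
\[
S_\mu(x,t)=(\mu t)^{-1}S_1\bigl(x/\sqrt{\mu t},1\bigr).
\]
Substituting $x=\sqrt{\mu t}\,y$ then gives
\[
\int_{\R^2}|x||S_\mu(x,t)|^2dx=(\mu t)^{-2}(\mu t)^{1/2}(\mu t)\int_{\R^2}|y||S_1(y,1)|^2dy=(\mu t)^{-1/2}\int_{\R^2}|y||S_1(y,1)|^2dy.
\]
So the whole lemma reduces to showing the time-independent integral $\int|y||S_1(y,1)|^2dy$ is finite.

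To prove finiteness, I will show pointwise bounds on $K:=S_1(\cdot,1)$: first, $K$ is bounded, and second, $|K(y)|\lesssim |y|^{-2}$ for $|y|\ge1$.

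\textbf{Boundedness.} This is immediate, since $\widehat K=(2\pi)^{-1}e^{-|\xi|^2}P(\xi)\in L^1$. Hence $\int_{|y|\le1}|y||K|^2dy<\infty$.

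\textbf{Decay.} Write $P(\xi)=I-\xi\xi^{\rm t}/|\xi|^2$. The identity part gives the Gaussian $G_1$, which decays rapidly. For the remaining part, use $|\xi|^{-2}=\int_0^\infty e^{-\lambda|\xi|^2}d\lambda$. Then the $(l,m)$ entry of the second part equals
\[
-\p_l\p_m\int_1^\infty G(y,s)\,ds
\]
(up to constants), where $G(\cdot,s)$ is the heat kernel at time $s$. Differentiating the Gaussian twice gives $|\p_l\p_m G(y,s)|\lesssim s^{-2}e^{-|y|^2/8s}$. Therefore
\[
|K(y)|\lesssim \int_1^\infty s^{-2}e^{-|y|^2/8s}ds\lesssim |y|^{-2}\int_0^\infty \sigma^{-2}e^{-1/8\sigma}d\sigma\lesssim|y|^{-2},
\]
after substituting $s=|y|^2\sigma$. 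This is the classical bound $|S(x,t)|\lesssim (|x|^2+t)^{-d/2}$.

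Consequently, $\int_{|y|\ge1}|y||K|^2dy\lesssim\int_1^\infty r\cdot r^{-4}\cdot r\,dr<\infty$, which closes the proof.

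The only delicate step is the decay bound. Boundedness alone is not enough, because the weight $|y|$ requires decay faster than $|y|^{-3/2}$ in $L^2$, and the nonlocal Riesz part decays only like $|y|^{-2}$. That rate still suffices, since $|y||K|^2\sim|y|^{-3}$ is integrable at infinity in two dimensions.
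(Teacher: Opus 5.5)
Your proof is correct. Its skeleton matches the paper's: reduce by scaling to $\int_{\R^2}|y||S(y,1)|^2dy<\infty$, show $|S(y,1)|\les \min\{1,|y|^{-2}\}$, and integrate in polar coordinates. The difference is in how the $|y|^{-2}$ decay of the nonlocal part is obtained.

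\textbf{The paper's route.} It writes $S_\mu(\cdot,1)=G_\mu(\cdot,1)I+\nabla^2\Phi_\mu$ with $\Phi_\mu=(-\Delta)^{-1}G_\mu(\cdot,1)$. Using radial symmetry, it integrates the ODE $-(r\phi_\mu')'=rG_\mu$ explicitly to get $\phi_\mu'(r)=\frac{1}{2\pi r}(e^{-r^2/4\mu}-1)$. It then bounds the Hessian of a radial function by $|\phi_\mu''|+|\phi_\mu'|/r$, which gives the single closed-form bound $|S_\mu(y,1)|\les e^{-c|y|^2}+(1-e^{-c|y|^2})|y|^{-2}$. That one bound yields both boundedness near the origin and decay at infinity.

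\textbf{Your route.} You use the subordination identity $e^{-|\xi|^2}|\xi|^{-2}=\int_1^\infty e^{-s|\xi|^2}ds$ together with the Gaussian bound $|\nabla^2G(y,s)|\les s^{-2}e^{-|y|^2/8s}$. You get boundedness separately from $\widehat K\in L^1$. This is dimension-free: it gives $|S(y,1)|\les(1+|y|)^{-d}$ in every $d$ with no change, and it needs no potential or ODE. The paper's computation relies on radial symmetry and the two-dimensional radial Laplacian, but in exchange gives an explicit formula for $\phi_\mu'$.

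\textbf{One small correction.} In $d=2$ the integral $\int_1^\infty G(y,s)\,ds$ diverges logarithmically, since $G(y,s)\sim(4\pi s)^{-1}$. This reflects the logarithmic Newtonian potential. So the expression $-\p_l\p_m\int_1^\infty G(y,s)\,ds$ is not meaningful as written; the Riesz part should be $-\int_1^\infty \p_l\p_m G(y,s)\,ds$, with the derivatives inside. That is exactly what Fubini gives directly, because $\int_{\R^2}\int_1^\infty|\xi|^2e^{-s|\xi|^2}\,ds\,d\xi=\int_{\R^2}e^{-|\xi|^2}d\xi<\infty$. It is also the form your subsequent estimate actually uses, so the argument stands once this is written correctly.
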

\begin{proof}
First, it follows from the scaling argument and the change of variables that 
\begin{align*}
\int_{\R^{2}}|x|\left|S_{\mu}(x, t)\right|^{2}dx 
= \int_{\R^{2}}|x|\left|t^{-1}S_{\mu}\left(\frac{x}{\sqrt{t}}, 1\right)\right|^{2}dx
=t^{-\frac{1}{2}} \int_{\R^{2}}|y|\left|S_{\mu}(y, 1)\right|^{2}dy, \ \ t>0. 
\end{align*}
Then, in order to obtain \eqref{S-west}, we need to prove 
\begin{equation}\label{Sy-est}
\int_{\mathbb{R}^2}|y|\left|S_\mu(y,1)\right|^2 dy < \infty.
\end{equation}

In what follows, let us prove \eqref{Sy-est}. Now, recalling the definitions of the kernels, i.e., \eqref{DEF-Stokes} and the surrounding discussion, we can see that $G_\mu(y, 1)=\mathcal{F}^{-1}
[e^{-\mu|\xi|^2}](y)
=(4\pi\mu)^{-1}e^{-\frac{|y|^2}{4\mu}}$ and 
\[
P(\xi)
=
I-\frac{\xi\otimes\xi}{|\xi|^2} 
\]
hold, then $S_{\mu}(y, 1)$ satisfies the following differential equation: 
\[
S_\mu(y,1)
=
G_\mu(y, 1)I+\nabla^2\Phi_\mu(y) \quad \text{with} \quad \Phi_\mu
:=
(-\Delta)^{-1}G_\mu(\cdot, 1),
\]
where $\nabla^2 \Phi_\mu(y)$ means the Hessian matrix of $\Phi_\mu$ at $y\in \R^{2}$. Since $G_\mu(\cdot, 1)$ is radial, $\Phi_\mu$ is also radial.
Hence, writing $r=|y|$ and $\Phi_\mu(y)=\phi_\mu(r)$,
we have
\[
-\left(
\phi_\mu''(r)
+
\frac1r\phi_\mu'(r)
\right)
=
G_\mu(r, 1).
\]
Then, multiplying $r$ on the both sides, we obtain $-\left(r\phi_\mu'(r)\right)'=rG_\mu(r, 1)$. Therefore, integrating it over $(0,r)$ and calculating the obtained result, we get  
\[
\phi_\mu'(r)
=
-\frac1r
\int_0^r
\rho G_\mu(\rho, 1)d\rho=\frac{1}{2\pi r}\left(e^{-\frac{r^2}{4\mu}}-1\right).
\]
Thus, the following estimate has been established: 
\[
|\phi_\mu''(r)|\les G_\mu(r, 1)+\left|\frac1r\phi_\mu'(r)\right|\les
e^{-\frac{r^2}{4\mu}}+
\frac{1-e^{-\frac{r^2}{4\mu}}}{r^2}.
\]
Finally, since the Hessian of a radial function satisfies
\[
\left|\nabla^2\Phi_\mu(y)\right| \les \left|\phi_\mu''\left(|y|\right)\right|
+\left|\frac{\phi_\mu'(|y|)}{|y|}\right|,
\]
for some constant $c>0$, we finally obtain the following estimate: 
\[
|S_\mu(y,1)|\les e^{-c|y|^2}+\frac{1-e^{-c|y|^2}}{|y|^2}. 
\]

By virtue of the above pointwise estimate, we can eventually conclude that 
\[
|S_\mu(y,1)|\les1 \ \ \text{for} \ \ |y|\le 1 \quad \text{and} \quad |S_\mu(y,1)| \les |y|^{-2} \ \ \text{for} \ \ |y|\ge 1.
\]
Therefore, using these estimates and the polar coordinates, we arrive at 
\begin{align*}
\int_{\mathbb{R}^2}
|y|\left|S_\mu(y,1)\right|^2dy=\left(\int_{|y|\le 1}+\int_{|y|\ge 1}\right)|y|\left|S_\mu(y,1)\right|^2dy 
\les \int_{|y|\le 1}|y|\,dy+\int_1^\infty r^{-2}dr<\infty.
\end{align*}
This completes the proof of \eqref{Sy-est}. Thus, the desired result \eqref{S-west} has been obtained. 
\end{proof}

\section{Linear Analysis}  

\indent

In this section, we would like to analyze the asymptotic behavior of the linear solution $e^{t\mu \Delta}u_{0}$ to the original problem \eqref{eqn;NS} in our Fourier--Herz setting. 
First, under the conditions {\rm (A)} and {\rm (B)}, we shall derive an approximate form of the solution, which serves as a precursor to its asymptotic profile.
\begin{prop}[Linear Asymptotics, under Weak Assumptions]\label{prop-lin-half1-1}
Let $d\ge2$, $1\le p\le \infty$ and $s\in \R$. Suppose that the initial velocity $u_{0}$ satisfy the assumption {\rm (A)}. 
Moreover, we assume that $d$, $p$ and $s$ satisfy the following conditions: 
\begin{align}
\text{If} \ \ p>1, \ \ s>-\frac{d}{p'} \quad \text{and} \quad 
&\text{if} \ \ p=1, \ \ s\ge 0.  \label{cond-parameter-1-2}
\end{align}
Then, we have the following asymptotic formula: 
\begin{equation}\label{lin-half1}
\lim_{t \to \infty}t^{\frac{d}{2}(1-\frac{1}{p})+\frac{s}{2}}\left\||\N|^{s}\left(e^{\mu t\Delta}u_{0}-S_{\mu}(t)M_{0}\right)\right\|_{\wh{L}^{p}}=0.  
\end{equation}
\end{prop}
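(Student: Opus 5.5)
Everything reduces to the Fourier side. Since $\|f\|_{\wh{L}^p}=\|\wh f\|_{L^{p'}}$ and $\wh{S_\mu(t)M_0}(\xi)=(2\pi)^{-d/2}e^{-\mu t|\xi|^2}P(\xi)M_0$ (we fix the normalization in the definition of $S_\mu$ so that $e^{\mu t\Delta}$ applied to a field with Fourier transform $P(\xi)M_0$ matches it), the quantity to bound is
\[
\Big\||\xi|^{s}e^{-\mu t|\xi|^2}\big(\wh{u}_0(\xi)-P(\xi)M_0\big)\Big\|_{L^{p'}_\xi},
\]
up to a harmless constant factor. Write $r(\xi):=\wh u_0(\xi)-P(\xi)M_0$. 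Because $u_0\in\wh L^1$ we have $\wh u_0\in L^\infty$, and $|P(\xi)M_0|\le|M_0|$, so $r\in L^\infty$. Assumption (A) says $r(\xi)\to0$ as $|\xi|\to0$.

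First rescale $\xi=\eta/\sqrt t$. This gives
\[
t^{\frac d2(1-\frac1p)+\frac s2}\,\big\||\xi|^{s}e^{-\mu t|\xi|^2}r(\xi)\big\|_{L^{p'}}
=\big\||\eta|^{s}e^{-\mu|\eta|^2}r(\eta/\sqrt t)\big\|_{L^{p'}_\eta},
\]
since $d/p'=\frac d2\cdot2(1-\frac1p)\cdot\frac12\cdot 2$ accounts exactly for the Jacobian power $t^{-d/(2p')}$. The claim \eqref{lin-half1} is therefore equivalent to the right-hand side tending to $0$ as $t\to\infty$.

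For $p>1$ (so $p'<\infty$) apply dominated convergence in $\eta$. The integrand $|\eta|^{sp'}e^{-\mu p'|\eta|^2}|r(\eta/\sqrt t)|^{p'}$ is dominated by $\|r\|_{L^\infty}^{p'}|\eta|^{sp'}e^{-\mu p'|\eta|^2}$. This dominant is integrable near the origin exactly when $sp'>-d$, i.e. $s>-d/p'$, which is condition \eqref{cond-parameter-1-2}; it is integrable at infinity because of the Gaussian. For each fixed $\eta\neq0$, $\eta/\sqrt t\to0$, so by (A) the integrand tends to $0$ pointwise, and the limit vanishes.

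For $p=1$ (so $p'=\infty$) with $s\ge0$, an $\varepsilon$–$\delta$ argument replaces dominated convergence. Given $\varepsilon>0$, choose $\delta$ so that $|r(\xi)|<\varepsilon$ for $|\xi|<\delta$. On $|\eta|<\delta\sqrt t$ the function is bounded by $\varepsilon\sup_\eta|\eta|^se^{-\mu|\eta|^2}$, which is finite because $s\ge0$. On $|\eta|\ge\delta\sqrt t$ it is bounded by $\|r\|_{L^\infty}\sup_{|\eta|\ge\delta\sqrt t}|\eta|^se^{-\mu|\eta|^2}\to0$.

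The only delicate point is that (A) gives merely a pointwise limit with no rate. This forces the qualitative dominated-convergence or $\varepsilon$–$\delta$ argument rather than a decay estimate, and the integrability of $|\eta|^{sp'}$ at the origin is exactly what dictates the restriction on $s$. One should also check that $P(\xi)M_0$ is a legitimate tempered distribution, which it is since it is bounded, so that $S_\mu(t)M_0$ makes sense.
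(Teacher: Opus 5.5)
Your proof is correct and is essentially the paper's argument: the paper splits at $|\xi|=\delta$ for every $p$ and bounds the two pieces by $\varepsilon\, t^{-\frac{d}{2p'}-\frac{s}{2}}$ and $e^{-\frac{\mu}{2}\delta^{2}t}$, which is exactly your $p=1$ argument. For $p>1$ you simply package the same splitting as dominated convergence after the rescaling $\xi=\eta/\sqrt{t}$. The only slip is cosmetic: with the paper's definition $S_{\mu}(t)=\mathcal{F}^{-1}[e^{-\mu t|\xi|^{2}}P(\xi)]$ one has $\mathcal{F}[S_{\mu}(t)M_{0}]=e^{-\mu t|\xi|^{2}}P(\xi)M_{0}$ exactly, with no factor $(2\pi)^{-d/2}$. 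A mismatch there would not be a harmless constant, since it would leave a nonvanishing low-frequency term in the difference.
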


\begin{prop}[Linear Asymptotics, under Strong Assumptions]\label{prop-lin-half1-2}
Let $d\ge2$, $1\le p\le \infty$, $s\in \R$ and $\al>0$. Suppose that the initial velocity $u_{0}$ satisfy the assumption {\rm (B)} with $\al>0$. 
Moreover, we assume that $d$, $p$ and $s$ satisfy \eqref{cond-parameter-1-2}. 
Then, we have the following asymptotic formula: 
\begin{equation}\label{lin-half2}
\left\||\N|^{s}\left(e^{\mu t\Delta}u_{0}-S_{\mu}(t)M_{0}\right)\right\|_{\wh{L}^{p}}
\les t^{-\frac{d}{2}(1-\frac{1}{p})-\frac{s+\al}{2}}, \ \ t>0. 
\end{equation}
\end{prop}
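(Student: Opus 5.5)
The estimate is a direct computation on the Fourier side, since every norm involved is a weighted Lebesgue norm of the Fourier transform. By definition of $S_\mu$ in \eqref{DEF-Stokes},
\[
\mathcal{F}\left[|\N|^{s}\left(e^{\mu t\Delta}u_{0}-S_{\mu}(t)M_{0}\right)\right](\xi)
=|\xi|^{s}e^{-\mu t|\xi|^{2}}\left(\wh{u}_{0}(\xi)-P(\xi)M_{0}\right),
\]
up to the harmless constant $(2\pi)^{d/2}$ coming from the normalization of $\mathcal{F}$. With the same convention $\|f\|_{\wh{L}^p}=\|\wh f\|_{L^{p'}}$, it therefore suffices to bound
\[
I(t):=\left\||\xi|^{s}e^{-\mu t|\xi|^{2}}\left(\wh{u}_{0}(\xi)-P(\xi)M_{0}\right)\right\|_{L^{p'}_\xi}.
\]

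First I record the pointwise bound on the difference $D(\xi):=\wh{u}_{0}(\xi)-P(\xi)M_{0}$.
\begin{itemize}
\item Assumption (B) gives $\delta>0$ and $C>0$ with $|D(\xi)|\le C|\xi|^{\al}$ for $|\xi|\le\delta$.
\item For $|\xi|\ge\delta$, $u_0\in\wh{L}^1$ means $\wh u_0\in L^\infty$, and $|P(\xi)M_0|\le|M_0|$ because $P(\xi)$ is an orthogonal projection. Hence $|D(\xi)|\le \|u_0\|_{\wh L^1}+|M_0|$.
\item Since $|\xi|^{\al}\ge\delta^{\al}$ on this region, we get $|D(\xi)|\le C'|\xi|^{\al}$ for $|\xi|\ge\delta$ as well.
\end{itemize}
So $|D(\xi)|\les|\xi|^{\al}$ holds globally on $\R^d\setminus\{0\}$. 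This is what makes the bound valid for all $t>0$ and not only for large $t$.

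Next I insert this into $I(t)$ and rescale $\xi=t^{-1/2}\eta$:
\[
I(t)\les \left\||\xi|^{s+\al}e^{-\mu t|\xi|^{2}}\right\|_{L^{p'}_\xi}
= t^{-\frac{s+\al}{2}-\frac{d}{2p'}}\left\||\eta|^{s+\al}e^{-\mu|\eta|^{2}}\right\|_{L^{p'}_\eta}.
\]
Since $d/(2p')=\frac d2(1-\frac1p)$, the exponent is exactly the one in \eqref{lin-half2}, provided the last norm is finite.

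Finiteness of this norm is the only place where the parameter condition \eqref{cond-parameter-1-2} enters, and it is the step needing care.
\begin{itemize}
\item At infinity the Gaussian gives convergence for every exponent.
\item For $p>1$, so $p'<\infty$, integrability near the origin requires $(s+\al)p'>-d$, i.e. $s+\al>-d/p'$. This follows from $s>-d/p'$ and $\al>0$.
\item For $p=1$, so $p'=\infty$, we need $|\eta|^{s+\al}$ bounded near $0$, i.e. $s+\al\ge0$. This follows from $s\ge0$.
\end{itemize}
In fact the condition \eqref{cond-parameter-1-2} is stronger than needed, but it suffices. This gives \eqref{lin-half2}.
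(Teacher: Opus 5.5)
Your proof is correct. It differs from the paper's only in how the high frequencies are handled.

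The paper splits the $L^{p'}_\xi$ integral at $|\xi|=\delta$:
\begin{itemize}
\item on $|\xi|\le\delta$ it uses $|\widehat{u}_0(\xi)-P(\xi)M_0|\lesssim|\xi|^{\alpha}$ to get the algebraic rate;
\item on $|\xi|>\delta$ it uses only boundedness of the difference, extracts a factor $e^{-\frac{\mu}{2}p'\delta^{2}t}$, and absorbs it into $t^{-\frac{d}{2p'}-\frac{s+\alpha}{2}}$.
\end{itemize}
You instead note that $1\le\delta^{-\alpha}|\xi|^{\alpha}$ for $|\xi|\ge\delta$. So $|\widehat{u}_0(\xi)-P(\xi)M_0|\lesssim|\xi|^{\alpha}$ holds on all of $\mathbb{R}^d\setminus\{0\}$, and a single scaling identity finishes the proof.

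What your route buys is an estimate that is genuinely uniform in $t>0$. In the paper, the leftover factor $\int_{|\xi|>\delta}|\xi|^{sp'}e^{-\frac{\mu}{2}p't|\xi|^{2}}\,d\xi$ is bounded only for $t$ bounded away from $0$; the same holds for $\sup_{|\xi|>\delta}|\xi|^{s}e^{-\mu t|\xi|^{2}}$ when $p=1$ and $s>0$. Both blow up as $t\to0$ under $s>-d/p'$. So the paper's claim for small $t$ tacitly needs an extra remark: that piece is $O(t^{-\frac{d}{2p'}-\frac{s}{2}})$, which is dominated by the stated rate for $t\le1$.

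What the paper's splitting buys is that the same computation also proves Proposition~\ref{prop-lin-half1-1}. Under assumption (A) alone there is no power $|\xi|^{\alpha}$ to absorb the high frequencies. There one genuinely needs their exponential decay, set against an $\varepsilon$-small low-frequency contribution.

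Your remark that only $s+\alpha>-d/p'$ (respectively $s+\alpha\ge0$ when $p=1$) is actually used is also correct.
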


\begin{proof}[\bf Proof of Propositions~\ref{prop-lin-half1-1} and \ref{prop-lin-half1-2}]
First, we shall give the proof of \eqref{lin-half2}. From the condition \eqref{cond-2}, we note that there exists $\delta>0$ such that if $|\xi|\le \delta$, then the following result holds: 
\begin{equation}\label{cond-2-re}
\left|\wh{u}_{0}(\xi)-P(\xi)M_{0}\right| \les |\xi|^{\al}. 
\end{equation}

Now, we would like to deal with the case of $p>1$. For the latter sake, by using $\delta>0$ appeared in the above, let us split the target integral as follows: 
\begin{equation} 
\begin{aligned}
&\left\||\xi|^{s}e^{-\mu t|\xi|^{2}}\left(\wh{u}_{0}(\xi)-P(\xi)M_{0}\right)\right\|_{L^{p'}_{\xi}}^{p'}  \\
&=\int_{\R^{d}}|\xi|^{sp'}e^{-\mu p't|\xi|^{2}}\left|\wh{u}_{0}(\xi)-
P(\xi)M_{0}\right|^{p'}d\xi \\
&=\left(\int_{|\xi|\le \delta}+\int_{|\xi|>\delta}\right)|\xi|^{sp'}e^{-\mu p't|\xi|^{2}}\left|\wh{u}_{0}(\xi)-P(\xi)M_{0}\right|^{p'}d\xi \\
&=:I_{1}(t)+I_{2}(t).  \label{DEF-I1-I2}
\end{aligned}
\end{equation}
In what follows, let us evaluate $I_{1}(t)$ and $I_{2}(t)$. First, we shall treat $I_{1}(t)$. 
Since the assumption $s>-d/p'$ in \eqref{cond-parameter-1-2} implies that $(s+\al)p'>-d$ due to 
$\al>0$, we can easily obtain 
\begin{equation}\label{est-I1:p>1}
\begin{aligned}
I_{1}(t) 
&\les \int_{|\xi|\le \delta}|\xi|^{sp'+\al p'}e^{-\mu p't|\xi|^{2}} d\xi \\
&\les \int_{\R^{d}}|\xi|^{(s+\alpha)p'}e^{-\mu p't|\xi|^{2}} d\xi 
\les t^{-\frac{d}{2}-\frac{p'}{2}(s+\alpha)}, \ \ t>0.  
\end{aligned}
\end{equation}

Next, let us deal with $I_{2}(t)$. Here, noticing that $|P(\xi)| \les 1$ holds for all $\xi \in \R^{d}$. Then, from the assumption {\rm (A)}, we have 
\[
\left|\wh{u}_{0}(\xi)-P(\xi)M_{0}\right| \les \left|\wh{u}_{0}(\xi)\right|
+|P(\xi)M_{0}| \les 1, \ \ \text{for} \ \ {\rm a.e.} \ \xi\in \R^{d}. 
\]
Therefore, under the parameter condition \eqref{cond-parameter-1-2}, a direct calculation yields that 
\begin{equation}\label{est-I2:p>1}
\begin{aligned} 
I_{2}(t) &\les  \int_{|\xi|>\delta}|\xi|^{sp'}e^{-\mu p't|\xi|^{2}} d\xi \\
&\les e^{-\frac{\mu}{2}p'\dl^2 t} \int_{|\xi|>\dl} |\xi|^{sp'}e^{-\frac{\mu}{2} p't|\xi|^{2}} d\xi 
\les e^{-\frac{\mu}{2}p'\dl^2 t}. 
\end{aligned}
\end{equation}
Finally, combining \eqref{DEF-I1-I2}, \eqref{est-I1:p>1} and \eqref{est-I2:p>1}, we arrive at 
\begin{equation}\label{est-p>1}
\left\||\N|^{s}\left(e^{\mu t\Delta}u_{0}-S_{\mu}(t)M_{0}\right)\right\|_{\wh{L}^{p}}
\les  t^{-\frac{d}{2p'}-\frac{s+\alpha}{2}}+e^{-\frac{\mu}{2} \delta^{2}t}
\les t^{-\frac{d}{2p'}-\frac{s+\alpha}{2}}, \ \ t>0. 
\end{equation}
This estimate corresponds to the desired decay estimate \eqref{lin-half2} in the case of $p>1$. 

On the other hand, for \eqref{lin-half2} with $p=1$, one can calculate the maximum value of $|\xi|^{s+\al}e^{-\mu t|\xi|^{2}}$ for $\xi \in \R^{d}$ directly by virtue of $s+\al \ge0$, 
and then obtain the following result: 
\begin{equation} \label{est-p=1}
\begin{aligned}
&\left\||\N|^{s}\left(e^{\mu t\Delta}u_{0}-S_{\mu}(t)M_{0}\right)\right\|_{\wh{L}^{1}} 
=\left\||\xi|^{s}e^{-\mu t|\xi|^{2}}\left(\wh{u}_{0}(\xi)-P(\xi)M_{0}\right)\right\|_{L^{\infty}_{\xi}} \\
&\les \left\||\xi|^{s+\al}e^{-\mu t|\xi|^{2}}\right\|_{L^{\infty}_{\xi}(|\xi|\le \delta)}
+\left\||\xi|^{s}e^{-\mu t|\xi|^{2}}\right\|_{L^{\infty}_{\xi}(|\xi|> \delta)} \\
&\les  t^{-\frac{s+\al}{2}}+e^{-\frac{\mu}{2} \delta^{2}t}
\les t^{-\frac{s+\al}{2}}, \ \ t>0. 
\end{aligned}
\end{equation}

Finally, we would like to prove \eqref{lin-half1}. By virtue of the condition \eqref{cond-1}, we can see that for any $\e>0$, there exists $\delta>0$ such that if $|\xi|\le \delta$, then the following estimate holds, instead of \eqref{cond-2-re}: 
\begin{equation*}
\left|\wh{u}_{0}(\xi)-P(\xi)M_{0}\right| \les \e. 
\end{equation*}
Therefore, under the assumptions \eqref{cond-parameter-1-2}, in the same way to get \eqref{est-p>1} and \eqref{est-p=1}, for all $1\le p\le \infty$, we have the following estimate: 
\[
\left\||\N|^{s}\left(e^{\mu t\Delta}u_{0}-S_{\mu}(t)M_{0}\right)\right\|_{\wh{L}^{p}}
\les  \e t^{-\frac{d}{2p'}-\frac{s}{2}}+e^{-\frac{\mu}{2}\delta^{2}t}, \ \ t>0. 
\]
Moreover, taking the limit in the above, we obtain 
\[
\limsup_{t \to \infty}t^{\frac{d}{2p'}+\frac{s}{2}}\left\||\N|^{s}\left(e^{\mu t\Delta}u_{0}
-S_{\mu}(t)M_{0}\right)\right\|_{\wh{L}^{p}}
\les \e. 
\]
As a result, we eventually arrive at the desired asymptotic formula \eqref{lin-half1}, because $\e>0$ can be chosen arbitrary small. This completes the proof. 
\end{proof}
\section{Nonlinear Analysis}  

\indent

In this section, we shall carry out the nonlinear analysis of the target problem \eqref{eqn;NS}, to prove the main results. 
In what follows, for simplicity, we set the Duhamel term of \eqref{eqn;NS} as follows: 
\begin{equation}\label{DEF-N}
\mathcal{N}[u](x, t):=\int_0^t e^{\mu(t-\t)\Delta} \mathcal{P}_\s \div (u \otimes u)(\t) \,d\t. 
\end{equation}

\subsection{The 1st Order Asymptotics}  

\indent

This subsection is devoted to deriving the first asymptotic profile of the solution $u$ to \eqref{eqn;NS}. Namely, we shall give the proof of Theorems~\ref{thm;1st-1} and \ref{thm;1st-2}. By virtue of the linear analysis given in the previous section, in order to achieve this, it is sufficient to demonstrate the following decay estimate for the Duhamel term. The result \eqref{Duhamel-decay} below has already been obtained in \cite{N26} for the case of $d\ge3$. Here, let us carefully calculate it, including the two-dimensional case of $d=2$.
\begin{prop}[Decay of the Duhamel Term]\label{prop-N-decay}
Let $d\ge2$, $1\le p\le \infty$ and $s\in \R$. Suppose that the initial velocity $u_{0}$ satisfy $u_0 \in \widehat{\dot{B}}{_{\infty,2}^{-1}}(\R^d)\cap \wh{\dot{B}}{_{1,\infty}^0}(\R^d)$ with 
$\div u_0 =0$ in $\mathcal{S}'$, and $\|u_0\|_{\widehat{\dot{B}}{_{\infty,2}^{-1}}}$ is sufficiently small. 
Then, for $\mathcal{N}[u](x, t)$ defined by \eqref{DEF-N}, the following decay estimate holds: 
\begin{equation}\label{Duhamel-decay}
\left\||\N|^{s}\mathcal{N}[u](t)\right\|_{\wh{L}^{p}}
\les 
\begin{cases}
t^{-\frac{d}{2}(1-\frac{1}{p})-\frac{s+1}{2}}, &\text{if} \quad d\ge3, \\
t^{-\frac{d}{2}(1-\frac{1}{p})-\frac{s+1}{2}}\log (1+t), &\text{if} \quad d=2, 
\end{cases}
\end{equation}
for all $t\ge1$, where the parameters $d$, $p$ and $s$ fulfill the following condition: 
\begin{equation}\label{cond-s-NL}
s>-1-\min\left\{\frac{d}{p}, \frac{d}{p'}\right\} \quad \text{if} \quad 1<p<\infty, \qquad 
s\ge-1 \quad \text{if} \quad p=1, \infty. 
\end{equation}
\end{prop}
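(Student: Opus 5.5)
We pass to the Fourier side and bound the Duhamel term pointwise in frequency. Since the multiplier $P(\xi)$ has modulus $\lesssim 1$ and $\div$ contributes a factor $|\xi|$, we have
\[
\left|\mathcal{F}\bigl[|\N|^s\mathcal{N}[u](t)\bigr](\xi)\right|\lesssim \int_0^t |\xi|^{s+1}e^{-\mu(t-\t)|\xi|^2}\,\bigl|\widehat{u\otimes u}(\xi,\t)\bigr|\,d\t .
\]
Taking the $L^{p'}_\xi$ norm and applying Minkowski's inequality reduces the proof to bounding, for each $\t$, the product of the heat factor with $\widehat{u\otimes u}(\t)$. We split the time integral into $(0,t/2)$ and $(t/2,t)$.

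\textbf{Early times $(0,t/2)$.} Here $t-\t\sim t$, so the heat factor carries all the decay. We use Young's inequality in the form $\|\widehat{u\otimes u}(\t)\|_{L^\infty_\xi}\le\|\widehat u(\t)\|_{L^2_\xi}^2=\|u(\t)\|_{\widehat{L}^2}^2$. Then
\[
\bigl\||\xi|^{s+1}e^{-\mu t|\xi|^2/2}\bigr\|_{L^{p'}_\xi}\lesssim t^{-\frac d{2p'}-\frac{s+1}2},
\]
which requires $(s+1)p'>-d$; this is implied by \eqref{cond-s-NL}, and for $p=1$ it needs $s\ge -1$.

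\textbf{The time integral of $\|u(\t)\|^2_{\widehat{L}^2}$.} By \eqref{est;LpL1-re} with $p=2$ and $s=0$, we have $\|u(\t)\|_{\widehat{L}^2}^2\lesssim \t^{-d/2}$. This is integrable on $(1,\infty)$ when $d\ge3$, and gives $\log(1+t)$ on $(1,t/2)$ when $d=2$. Near $\t=0$ the decay bound is not integrable, so we use the $L^2$-in-time bound instead. Since $\|u(\t)\|_{\widehat L^2}^2=\sum_j\|\dot\Delta_j u(\t)\|_{\widehat L^2}^2$, we would control $\int_0^1\|u\|_{\widehat L^2}^2\,d\t$ through $u\in L^2(\R_+;\widehat L^\infty)$, interpolated with $u_0\in\widehat L^1$ (i.e. $\|\widehat u\|_{L^\infty}$ is uniformly bounded, which follows from the bootstrap in \cite{N26}): $\|\widehat u\|_{L^2}^2\le\|\widehat u\|_{L^\infty}\|\widehat u\|_{L^1}$. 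This yields a finite contribution. The early part is therefore $\lesssim t^{-\frac d{2p'}-\frac{s+1}2}\cdot\{1 \text{ or } \log(1+t)\}$, which is exactly the claimed bound, with the logarithm appearing precisely in $d=2$.

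\textbf{Late times $(t/2,t)$.} Here $\t\sim t$, so $u$ supplies the decay and the heat factor must absorb the singularity at $\t=t$. We use
\[
\bigl\||\xi|^{s+1}e^{-\mu(t-\t)|\xi|^2}\widehat{u\otimes u}\bigr\|_{L^{p'}}
\lesssim \bigl\||\xi|^{\theta}e^{-\mu(t-\t)|\xi|^2}\bigr\|_{L^\infty}\,\bigl\||\xi|^{s+1-\theta}\widehat{u\otimes u}(\t)\bigr\|_{L^{p'}},
\]
with $\theta\in(0,2)$ chosen so that $(t-\t)^{-\theta/2}$ is integrable. The remaining factor is $\||\N|^{s+1-\theta}(u\otimes u)\|_{\widehat L^p}$. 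We estimate it by Young's inequality in $\xi$ together with the subadditivity $|\xi|^{\sigma}\lesssim|\xi-\eta|^\sigma+|\eta|^\sigma$ for $\sigma\ge0$, which gives
\[
\lesssim \||\N|^{\sigma}u\|_{\widehat L^{p}}\|u\|_{\widehat L^1}\quad\text{or}\quad \||\N|^{\sigma}u\|_{\widehat L^{q_1}}\|u\|_{\widehat L^{q_2}} .
\]
Each factor then decays by \eqref{est;LpL1-re}. If $\sigma=s+1-\theta<0$, we instead use Lemma~\ref{lem;A-P}, the product estimate for negative regularity, which is where the range $s>-1-d/p$ enters. Counting exponents, the product decays like $t^{-\frac d{2p'}-\frac{s+1-\theta}2-\frac d2}$, where the extra $t^{-d/2}$ comes from an $\widehat L^1$-type factor. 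Integrating over $(t/2,t)$ gives $t^{1-\theta/2}$, so the total is $t^{-\frac d{2p'}-\frac{s+1}2+1-\frac d2}$. This is no worse than the target, since $d\ge2$.

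\textbf{Main obstacle.} The delicate step is justifying the exponent bookkeeping at the endpoints $p=1,\infty$ and at $s$ near $-1-\min\{d/p,d/p'\}$, where $\theta$ and the Hölder/Young exponents must be chosen carefully so that every factor satisfies the hypothesis $s>-d/p'$ of Proposition~\ref{thm;decay}. Controlling the early-time region near $\t=0$ is the other delicate point. The logarithm in $d=2$ is sharp, arising from $\int_1^{t}\t^{-1}\,d\t$.
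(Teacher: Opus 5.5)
The overall route is the paper's. You split at $t/2$. On $(0,t/2)$ you use Hausdorff--Young with $\|\widehat{u\otimes u}(\tau)\|_{L^\infty_\xi}\lesssim\|u(\tau)\|_{\widehat{L}^2}^2\le\|u(\tau)\|_{\widehat{L}^1}\|u(\tau)\|_{\widehat{L}^\infty}$. You treat $\tau\in(0,1)$ via $L^\infty_\tau\widehat{L}^1\times L^2_\tau\widehat{L}^\infty$ and $\tau\ge1$ via the rate $\tau^{-d/2}$, which produces $1$ or $\log t$. On $(t/2,t)$ you use product estimates plus \eqref{est;LpL1-re}. The early-time part is fine. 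The late-time part, as written, has two defects.

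\textbf{The Young bound is misstated.} From $|\xi|^\sigma\lesssim|\xi-\eta|^\sigma+|\eta|^\sigma$ and $L^{p'}*L^1\subset L^{p'}$ one gets
$\||\nabla|^\sigma(u\otimes u)\|_{\widehat{L}^p}\lesssim\||\nabla|^\sigma u\|_{\widehat{L}^p}\|\widehat{u}\|_{L^1_\xi}=\||\nabla|^\sigma u\|_{\widehat{L}^p}\|u\|_{\widehat{L}^\infty}$.
It is not a bound with $\|u\|_{\widehat{L}^1}=\|\widehat{u}\|_{L^\infty_\xi}$. That factor does not fit Young's inequality and is only bounded in time. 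With it you would lose exactly the $t^{-d/2}$ your exponent count relies on; that decay comes from $\|u(\tau)\|_{\widehat{L}^\infty}\lesssim\tau^{-d/2}$.

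\textbf{The ``main obstacle'' you leave open comes from requiring $\theta\in(0,2)$.} At $p\in\{1,\infty\}$, $s=-1$ this forces $\sigma=s+1-\theta<0$. Lemma~\ref{lem;A-P} gives nothing there: its admissible regularity range is empty for $p=\infty$ and is $(0,d)$ for $p=1$.

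The fix is to take $\theta=0$, as the paper does. Bound $|e^{-\mu(t-\tau)|\xi|^2}P(\xi)|\le1$ and estimate $\int_{t/2}^t\||\nabla|^{s}\div(u\otimes u)(\tau)\|_{\widehat{L}^p}\,d\tau$.
\begin{itemize}
\item Then $\sigma=s+1\ge0$ whenever $p\in\{1,\infty\}$.
\item At $s=-1$ the corrected Young bound gives $\|u\otimes u\|_{\widehat{L}^1}\lesssim\|u\|_{\widehat{L}^1}\|u\|_{\widehat{L}^\infty}\lesssim\tau^{-d/2}$ and $\|u\otimes u\|_{\widehat{L}^\infty}\lesssim\|u\|_{\widehat{L}^\infty}^2\lesssim\tau^{-d}$, which are exactly the rates needed.
\item Negative regularity $s+1<0$ then occurs only for $1<p<\infty$. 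There Lemma~\ref{lem;A-P} applies in $\widehat{\dot{B}}{}^{s+1}_{p,p'}$ precisely under \eqref{cond-s-NL}.
\end{itemize}
With these two repairs the proof is complete, and your Young/subadditivity estimate serves as an elementary substitute for Lemma~\ref{lem;bil}.
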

\begin{proof}
First of all, for $t\ge1$, let us split the integral in \eqref{DEF-N} as follows: 
\begin{align}\label{N-split}
\mathcal{N}[u](x, t)
=\left(\int_0^{t/2}+\int_{t/2}^t \right) e^{\mu(t-\t)\Delta} \mathcal{P}_\s \div (u \otimes u) \,d\t
=:\mathcal{N}_{1}[u](x, t)+\mathcal{N}_{2}[u](x, t). 
\end{align}

In what follows, we shall evaluate $\mathcal{N}_{1}[u](x, t)$ and $\mathcal{N}_{2}[u](x, t)$. We start with evaluating $\mathcal{N}_{1}[u](x, t)$. For all $t\ge1$, it follows from Hausdorff--Young's inequality, Proposition~\ref{thm;GWP} and \eqref{est;LpL1-re} that  
\begin{equation} \label{N1-est}
\begin{aligned}
\left\||\N|^{s}\mathcal{N}_{1}[u](t)\right\|_{\wh{L}^{p}}
\les&
\int_0^{t/2}\<t-\t\>^{-\frac{d}{2p'}-\frac{s+1}{2}}\|(u\otimes u)(\t)\|_{\wh{L}^1}d\t \\
\les& \<t\>^{-\frac{d}{2p'}-\frac{s+1}{2}}
     \int_0^1
       \left(\frac{\<t\>}{\<t-\t\>}\right)^{\frac{d}{2p'}+\frac{s+1}{2}}
       \|u(\t)\|_{\wh{L}^1}\|u(\t)\|_{\wh{L}^\infty}
      d\t \\
    &+t^{-\frac{d}{2p'}-\frac{s+1}{2}} \int_1^{t} \|u(\t)\|_{\wh{L}^1}\|u(\t)\|_{\wh{L}^\infty} d\t \\
\les& t^{-\frac{d}{2p'}-\frac{s+1}{2}} 
      \left(\|u\|_{L^\infty(0,1;\wh{L}^1)}\|u\|_{L^2(0,1;\wh{L}^\infty)}
      +\int_1^{t}\t^{-\frac{d}{2}} d\t \right) \\
\les& t^{-\frac{d}{2p'}-\frac{s+1}{2}} + t^{-\frac{d}{2p'}-\frac{s+1}{2}} 
\begin{cases}
1, &\text{if} \quad d\ge3, \\
\log t, &\text{if} \quad d=2
\end{cases} \\
\les& t^{-\frac{d}{2p'}-\frac{s+1}{2}} 
\begin{cases}
1, &\text{if} \quad d\ge3, \\
\log (1+t), &\text{if} \quad d=2. 
\end{cases} 
\end{aligned}
\end{equation}

For $\mathcal{N}_{2}[u](x, t)$ in \eqref{N-split}, it holds true that for all $d\ge2$ and $s$ satisfying \eqref{cond-s-NL}, 
\begin{align} \label{est;high}
\left\||\N|^{s}\mathcal{N}_{2}[u](t)\right\|_{\wh{L}^{p}} \les \int_{t/2}^t \|\div(u \otimes u)\|_{\widehat{\dot{H}}{_p^s}} d\t 
\les t^{-\frac{d}{2p'} -\frac{s+1}{2}-(\frac{d}{2}-1)}, \ \ t\ge1. 
\end{align}
In what follows, let us confirm \eqref{est;high}. 
First, we would like to deal with the case of $1<p<\infty$. 
Now, noticing that $\wh{\dot{H}}{_p^s}(\R^d) \simeq \wh{\dot{B}}{_{p,p'}^s}(\R^d)$ holds true, by virtue of Lemma~\ref{lem;equi}. 
Also, for all $s$ satisfying $-1-\min\{\frac{d}{p},\frac{d}{p'}\}<s<-1+\frac{d}{p}$, 
it follows from Lemma~\ref{lem;A-P} and Proposition~\ref{thm;decay} that 
\begin{align*}
\int_{t/2}^t \|\div(u \otimes u)\|_{\fB{^s_{p,p'}}} d\t 
&\les \int_{t/2}^t 
        \|u(\tau)\|_{\wh{L}^\infty \cap \wh{\dot{B}}{_{p,p'}^{\frac{d}{p}}}}
        \|u(\tau)\|_{\wh{\dot{B}}{_{p,p'}^{s+1}}} 
      d\t  \\
&\les \int_{t/2}^t \t^{-\frac{d}{2}} \t^{-\frac{d}{2p'}-\frac{s+1}{2}}d\t
\les t^{-\frac{d}{2p'}-\frac{s+1}{2}-(\frac{d}{2}-1)}. 
\end{align*}
On the other hand, if $s \ge -1+\frac{d}{p}$, namely $s$ satisfies $s+1 \ge \frac{d}{p}>0$, then we have from Lemma~\ref{lem;bil}, Proposition~\ref{thm;decay} and \eqref{est;LpL1-re} that 
\begin{align*}
\int_{t/2}^t \|\div(u \otimes u)\|_{\wh{\dot{B}}{_{p,p'}^s}} d\t 
&\les \int_{t/2}^t 
        \|u(\tau)\|_{\wh{L}^\infty} 
        \|u(\tau)\|_{\wh{\dot{B}}{_{p,p'}^{s+1}}} 
      d\t \\
&\les \int_{t/2}^t \t^{-\frac{d}{2}} \t^{-\frac{d}{2p'}-\frac{s+1}{2}}d\t
\les t^{-\frac{d}{2p'}-\frac{s+1}{2}-(\frac{d}{2}-1)}. 
\end{align*} 

Next, let us consider the case of $p=\infty$. If $s=-1$, then \eqref{est;LpL1-re} directly yields that 
\begin{align*}
\int_{t/2}^t \|\div(u \otimes u)\|_{\wh{\dot{H}}{_{\infty}^{-1}}} d\t  
&\les \int_{t/2}^t \|u(\tau)\|_{\wh{L}^\infty}^2 d\t  \\
&\les \int_{t/2}^t \t^{-\frac{d}{2}}\t^{-\frac{d}{2}}d\t 
\les t^{-\frac{d}{2}-(\frac{d}{2}-1)}. 
\end{align*}
As for the case of $s>-1$, thanks to the norm equivalence $\wh{\dot{H}}{_\infty^s}(\R^d) \simeq \wh{\dot{B}}{_{\infty,1}^s}(\R^d)$, using Lemma~\ref{lem;bil}, Proposition~\ref{thm;decay} and \eqref{est;LpL1-re} again, we can immediately see that 
\begin{align*}
\int_{t/2}^t \|\div (u \otimes u)\|_{\wh{\dot{B}}{_{\infty,1}^s}} d\t 
&\les \int_{t/2}^t \|u(\tau)\|_{\wh{L}^\infty} \|u(\tau)\|_{\wh{\dot{B}}{_{\infty,1}^{s+1}}} d\t \\
&\les  \int_{t/2}^t \t^{-\frac{d}{2}} \t^{-\frac{d}{2}-\frac{s+1}{2}}d\t 
\les t^{-\frac{d}{2}-\frac{s+1}{2}-(\frac{d}{2}-1)}. 
\end{align*}

Finally, we would like to derive \eqref{est;high} with $p=1$. For $s=-1$, using \eqref{est;LpL1-re} again, we analogously obtain that 
\begin{align*}
\int_{t/2}^t \|\div(u \otimes u)\|_{\wh{\dot{H}}{_{1}^{-1}}} d\t 
&\les \int_{t/2}^t \|u(\tau)\|_{\wh{L}^\infty}\|u(\tau)\|_{\wh{L}^1} d\t  \\
&\les \int_{t/2}^t \t^{-\frac{d}{2}}d\t 
\les t^{-(\frac{d}{2}-1)}. 
\end{align*}
Now, recalling that the fact $\wh{\dot{B}}{_{1, 1}^s}(\R^d) \hookrightarrow \wh{\dot{B}}{_{1, \infty}^s}(\R^d) \simeq \wh{\dot{H}}{_1^s}(\R^d)$ holds true. 
Hence, in the same way as before, for all $s>-1$, we have the following estimate: 
\begin{align*}
\int_{t/2}^t \|\div (u \otimes u)\|_{\wh{\dot{B}}{_{1, \infty}^s}} d\t 
&\les \int_{t/2}^t \|u(\tau)\|_{\wh{L}^\infty} \|u(\tau)\|_{\wh{\dot{B}}{_{1, \infty}^{s+1}}} d\t \\
&\les \int_{t/2}^t \t^{-\frac{d}{2}} \t^{-\frac{s+1}{2}}d\t 
\les t^{-\frac{s+1}{2}-(\frac{d}{2}-1)}. 
\end{align*}
Eventually, gathering all the above estimates, we can see that the estimate \eqref{est;high} is true. 
Therefore, combining \eqref{N-split}, \eqref{N1-est} and \eqref{est;high}, we can say that the desired result \eqref{Duhamel-decay} has been established. 
\end{proof}

\begin{proof}[\rm{\bf{End of the Proof of Theorems~\ref{thm;1st-1} and \ref{thm;1st-2}}}]
It follows from \eqref{IE} and \eqref{DEF-N} that 
\[
u(t)-S_{\mu}(t)M_{0}=\left(e^{\mu t\Delta}u_{0}-S_{\mu}(t)M_{0}\right)-\mathcal{N}[u](x, t). 
\]
First, summarizing up \eqref{cond-parameter-1-2} and \eqref{cond-s-NL}, then we obtain the condition ${\rm (C)}$. 
Therefore, combining the above equation, Propositions~\ref{prop-lin-half1-1} and \ref{prop-N-decay}, and taking the limit, we can conclude that the desired asymptotic formula \eqref{1st-1} is true. 
This completes the proof of Theorem~\ref{thm;1st-1}. 
In a similar way to the proof of Theorem \ref{thm;1st-1}, 
the desired decay estimate \eqref{1st-2} directly follows from the above equation, Propositions~\ref{prop-lin-half1-2} and \ref{prop-N-decay}. 
This completes the proof of Theorem~\ref{thm;1st-2}. 
\end{proof}

\subsection{The 2nd Order Asymptotics}  

\indent

Finally in this subsection, we would like to derive the second asymptotic profiles of the solution $u$ to \eqref{eqn;NS}. 
First, we shall treat the higher-dimensional case $d\ge3$. Namely, let us prove Theorem~\ref{thm;2nd-high}. The proof of the following proposition is similar to the proof of Theorem~2.4 in \cite{N26} by the second author. Here, we are able to give the proof of the following proposition, by slightly modifying the method used in \cite{N26}.
\begin{prop}[Asymptotic Profile for the Duhamel Term, Higher-dimensional Case]\label{prop-N-asymp-high}
Let $d\ge3$, $1\le p\le \infty$ and $s\in \R$. Suppose that the initial velocity $u_{0}$ satisfy $u_0 \in \widehat{\dot{B}}{_{\infty,2}^{-1}}(\R^d)\cap \wh{\dot{B}}{_{1,\infty}^0}(\R^d)$ with 
$\div u_0 =0$ in $\mathcal{S}'$, and $\|u_0\|_{\widehat{\dot{B}}{_{\infty,2}^{-1}}}$ is sufficiently small. 
Then, for $\mathcal{N}[u](x, t)$ defined by \eqref{DEF-N}, we have the following asymptotic formula: 
\begin{equation}\label{Duhamel-asymp-high}
\lim_{t\to \infty}t^{\frac{d}{2}(1-\frac{1}{p})+\frac{s+1}{2}}\left\||\N|^{s}\left(\mathcal{N}[u](t)-\p_{k}S_{\mu}(t)\int_{0}^{\infty}\int_{\R^{d}}(u_{k}u)(x, t)dxdt\right)\right\|_{\wh{L}^{p}}=0, 
\end{equation}
where the parameters $d$, $p$ and $s$ fulfill the condition \eqref{cond-s-NL}.  
\end{prop}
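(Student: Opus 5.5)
Write $B_k:=\int_0^\infty\int_{\R^d}(u_ku)(y,\t)\,dy\,d\t$. The plan is a Fujigaki--Miyakawa type argument carried out in Fourier variables.

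First we check that $B_k$ is well defined. By Proposition~\ref{thm;GWP} and \eqref{est;LpL1-re} we have $\|u_ku(\t)\|_{L^1}\le \|u(\t)\|_{L^2}^2\les\|u(\t)\|_{\wh{L}^2}^2\les \t^{-d/2}$ for $\t\ge1$, which is integrable since $d\ge3$. Near $\t=0$ we use $\|u\|_{L^\infty(0,1;\wh{L}^1)}\|u\|_{L^2(0,1;\wh{L}^\infty)}$ together with $\|fg\|_{\wh L^1}\le\|f\|_{\wh L^1}\|g\|_{\wh L^\infty}$. In Fourier variables the symbol of $e^{\mu(t-\t)\Delta}\mathcal{P}_\s\div$ acting on $u\otimes u$ is $i\xi_k e^{-\mu(t-\t)|\xi|^2}P(\xi)$. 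Hence
\[
\F\Big[\mathcal{N}[u](t)-\p_kS_\mu(t)B_k\Big](\xi)=(2\pi)^{-d/2}i\xi_kP(\xi)\,(J_1+J_2+J_3),
\]
and we split the difference into three pieces:
\begin{align*}
J_1&:=\int_0^{t/2}\Big(e^{-\mu(t-\t)|\xi|^2}-e^{-\mu t|\xi|^2}\Big)\wh{u_ku}(\xi,\t)\,d\t,\\
J_2&:=\int_0^{t/2}e^{-\mu t|\xi|^2}\Big(\wh{u_ku}(\xi,\t)-\wh{u_ku}(0,\t)\Big)\,d\t,\\
J_3&:=-\int_{t/2}^\infty e^{-\mu t|\xi|^2}\wh{u_ku}(0,\t)\,d\t,
\end{align*}
plus the term $\mathcal{N}_2[u]$ from \eqref{N-split}. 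By \eqref{est;high} we already have $\||\N|^s\mathcal{N}_2[u](t)\|_{\wh L^p}\les t^{-\frac d{2p'}-\frac{s+1}2-(\frac d2-1)}=o(t^{-\frac d{2p'}-\frac{s+1}2})$ for $d\ge3$, so this part is done.

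For $J_3$, the $L^1$ bound above gives $\int_{t/2}^\infty\|u_ku\|_{L^1}\,d\t\les t^{1-d/2}\to0$. Multiplied by $\||\xi|^{s+1}e^{-\mu t|\xi|^2}\|_{L^{p'}}\les t^{-\frac d{2p'}-\frac{s+1}2}$, this is $o$ of the target rate. Finiteness of this norm requires $(s+1)p'>-d$, which is implied by \eqref{cond-s-NL}.

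For $J_1$, we use $|e^{-\mu(t-\t)|\xi|^2}-e^{-\mu t|\xi|^2}|\les \t|\xi|^2e^{-\mu t|\xi|^2/2}$ for $\t\le t/2$. For fixed $T$, the part $\int_0^T$ contributes $T\,t^{-1}$ times the target rate. The part $\int_T^{t/2}$ is bounded by $\sup_{\t\le t/2}$ of the multiplier difference times $\int_T^\infty\|u_ku\|_{\wh L^1}\,d\t$, which is small uniformly in $t$ as $T\to\infty$. Letting $t\to\infty$ and then $T\to\infty$ gives $o(1)$ times the rate.

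The step I expect to be the main obstacle is $J_2$, because the Fourier--Herz setting gives no moment control such as $|x|u_ku\in L^1$. Instead I would argue by uniform continuity at $\xi=0$. For fixed $T$, the family $\{\wh{u_ku}(\cdot,\t)\}_{\t\le T}$ is an equicontinuous family of Fourier transforms of $L^1$ functions. This needs continuity of $\t\mapsto u_ku(\t)$ in $L^1$ on compact subintervals of $(0,\infty)$; on the initial layer $(0,\t_0)$ we only use the smallness of $\int_0^{\t_0}\|u_ku\|_{L^1}\,d\t$. Consequently $\sup_{\t\le T}|\wh{u_ku}(\xi,\t)-\wh{u_ku}(0,\t)|\to0$ as $|\xi|\to0$, and on the effective region $|\xi|\les t^{-1/2}$ this tends to $0$ as $t\to\infty$. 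The tail $\int_T^{t/2}$ is bounded by $2\int_T^\infty\|u_ku\|_{L^1}\,d\t$, which is small. Combining these with the same $L^{p'}_\xi$ multiplier bound and a split into $|\xi|\le\delta$ and $|\xi|>\delta$, as in the proof of Proposition~\ref{prop-lin-half1-1}, yields \eqref{Duhamel-asymp-high} after sending $t\to\infty$, then $T\to\infty$, then $\delta\to0$. If the continuity in $L^1$ is not available from the earlier results, the fallback is to exploit the product structure directly, since $\wh{u_ku}=\wh{u_k}*\wh u$ is continuous whenever $\wh u(\t)\in L^1\cap L^\infty$ uniformly on $[\t_0,T]$.
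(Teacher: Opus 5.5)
Your decomposition is essentially the paper's, with the two replacements done in the opposite order. Your $J_1$ (difference of heat factors) corresponds to the paper's $J_2$, and your $J_2$ (difference $\widehat{u_ku}(\xi,\tau)-\widehat{u_ku}(0,\tau)$) corresponds to the paper's $J_1$. You treat $J_3$ and $\mathcal{N}_2[u]$ identically to the paper. Your bounds for $\mathcal{N}_2[u]$, for $J_3$, and for the heat-factor difference (via the split at a fixed $T$ instead of the paper's direct computation of $\int_0^{t/2}\tau\|u\|_{L^2}^2d\tau$) are correct.

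The gap is in $J_2$, the step you flagged. Your argument needs $\sup_{\tau_0\le\tau\le T}|\widehat{u_ku}(\xi,\tau)-\widehat{u_ku}(0,\tau)|\to0$ as $|\xi|\to0$, i.e.\ equicontinuity in $\tau$. You derive it from continuity of $\tau\mapsto u_ku(\tau)$ in $L^1$, which is neither proved in the paper nor supplied by Propositions~\ref{thm;GWP}--\ref{thm;decay}. The fallback does not repair this. Uniform bounds on $\hat u(\tau)$ in $L^1\cap L^\infty$ make each $\hat u_k*\hat u(\cdot,\tau)$ continuous, but they give no uniform modulus of continuity at $\xi=0$: spatial translates $u(\cdot-x_\tau)$ have the same norms, while $\widehat{u_ku}$ picks up the phase $e^{-ix_\tau\cdot\xi}$.

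The missing observation is that no uniformity in $\tau$ is needed. After the change of variables $\xi=\eta/\sqrt t$, set
\[
\phi_t(\tau):=\Big\||\eta|^{s+1}e^{-\mu|\eta|^2}\Big(\widehat{u_ku}\big(\tfrac{\eta}{\sqrt t},\tau\big)-\widehat{u_ku}(0,\tau)\Big)\Big\|_{L^{p'}_\eta}\mathbf{1}_{\{\tau<t/2\}}.
\]
For a.e.\ fixed $\tau$ we have $u_ku(\tau)\in L^1$, so $\widehat{u_ku}(\cdot,\tau)$ is continuous and $\phi_t(\tau)\to0$ as $t\to\infty$. Moreover $\phi_t(\tau)\lesssim\|u(\tau)\|_{L^2}^2$, which lies in $L^1(0,\infty)$ when $d\ge3$: near $\tau=0$ use $\|u\|_{\wh{L}^1}\|u\|_{\wh{L}^\infty}$, and for $\tau\ge1$ use the bound $\tau^{-d/2}$. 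Dominated convergence in $\tau$ then gives $\int_0^{t/2}\phi_t\,d\tau\to0$. This is exactly the paper's treatment of $\psi_t$ in its $J_1$.

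Equivalently, in your $\delta$-splitting, replace the supremum over $\tau$ by dominated convergence applied to $\omega_\delta(\tau):=\sup_{|\xi|\le\delta}|\widehat{u_ku}(\xi,\tau)-\widehat{u_ku}(0,\tau)|\le 2\|u_ku(\tau)\|_{\wh{L}^1}$ as $\delta\to0$. With either change, the initial layer and the time-continuity become unnecessary, and your proof closes.
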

\begin{proof}
Throughout this proof, we shall use the same decomposition $\mathcal{N}_{1}[u](x, t)$ and $\mathcal{N}_{2}[u](x, t)$ for the Duhamel term $\mathcal{N}[u](x, t)$ as in \eqref{N-split}. Then, it immediately follows from the estimate \eqref{est;high} that the following result holds for all $d\ge3$: 
\begin{align} \label{est;high-2}
\limsup_{t\to \infty}t^{\frac{d}{2}(1-\frac{1}{p})+\frac{s+1}{2}}\left\||\N|^{s}\mathcal{N}_{2}[u](t)\right\|_{\wh{L}^{p}} 
\les \lim_{t\to \infty}t^{-(\frac{d}{2}-1)}=0. 
\end{align}

By virtue of the above estimate, we only need to analyze $\mathcal{N}_{1}[u](x, t)$. 
To do that, let us rewrite the target integral. Now, recalling the definition of $S_{\mu}(x, t)$ in \eqref{DEF-Stokes}, we have 
\begin{equation}
\begin{aligned}
&\mathcal{F}\left[\mathcal{N}_{1}[u](t)-\p_{k}S_{\mu}(t)\int_{0}^{\infty}\int_{\R^{d}}(u_{k}u)(x, t)dxdt\right](\xi)  \\
&=\int_0^{t/2} 
    i \xi_k e^{-\mu (t-\t)|\xi|^2} P(\xi) (2\pi)^{\frac{d}{2}}
    \mathcal{F}[(u_k u)](\xi, \t)d\t-i\xi_k \wh{S}_\mu(t) \int_0^\infty (2\pi)^{\frac{d}{2}}\mathcal{F}[(u_k u)](0, \t) d\t \\
&=\int_0^{t/2} 
    i \xi_k e^{-\mu (t-\t)|\xi|^2} P(\xi) (2\pi)^{\frac{d}{2}}
    \Big(\mathcal{F}[(u_k u)](\xi, \t)-\mathcal{F}[(u_k u)](0, \t)\Big) d\t \\
&\ \ \ \,-\int_0^{t/2} i\xi_k \left(e^{-\mu t|\xi|^2}-e^{-\mu(t-\t)|\xi|^2}\right) P(\xi) (2\pi)^{\frac{d}{2}} \mathcal{F}[(u_k u)](0, \t) d\t \\
&\ \ \ \,-i \xi_k \wh{S}_\mu(t) \int_{t/2}^\infty (2\pi)^{\frac{d}{2}}\mathcal{F}[(u_k u)](0, \t) d\t \\
&=: \wh{J}_{1}(\xi, t)+\wh{J}_{2}(\xi, t)+\wh{J}_{3}(\xi, t). \label{N1-split} 
\end{aligned}
\end{equation}
Thus, in order to obtain the desired formula \eqref{Duhamel-asymp-high}, it is sufficient to evaluate $J_{1}(x, t)$, $J_{2}(x, t)$ and $J_{3}(x, t)$. More precisely, the following decay estimates are required to complete the proof: 
\begin{align} \label{est;J123}
\lim_{t\to \infty}t^{\frac{d}{2}(1-\frac{1}{p})+\frac{s+1}{2}}\left\||\N|^{s}J_{k}(t)\right\|_{\wh{L}^{p}} =0 \quad (k=1, 2, 3). 
\end{align}

First, we would like to evaluate $J_{1}(x, t)$. Now, recalling that $\| |\eta|^{s+1} e^{-|\eta|^2} \|_{L^{p'}_\eta}
\les 1$ holds for all $1\le p\le \infty$ under the condition \eqref{cond-s-NL}. 
Then, using $|P(\xi)| \les 1$ again and performing the change of variables $\xi \mapsto \frac{\eta}{\sqrt{t-\t}}$, we can see that 
\begin{equation}
\begin{aligned}
\||\N|^sJ_{1}(t)\|_{\wh{L}^p}
&\les 
     \int_0^{t/2} 
     \left\||\xi|^{s+1}e^{-\mu(t-\t)|\xi|^2}\Big(\mathcal{F}[(u_k u)](\xi, \t)-\mathcal{F}[(u_k u)](0, \t)\Big)\right\|_{L^{p'}_\xi}
     d\t \\
     &\begin{aligned}
\les \int_0^{t/2} &(t-\t)^{-\frac{d}{2p'}-\frac{s+1}{2}} \\
&\times \left\||\eta|^{s+1}e^{-\mu|\eta|^2}\left(\mathcal{F}[(u_k u)]\left(\frac{\eta}{\sqrt{t-\t}}, \t\right)
       -\mathcal{F}[(u_k u)](0, \t)\right)
     \right\|_{L^{p'}_\eta}
     d\t \\
     \end{aligned} \\
&\les t^{-\frac{d}{2p'}-\frac{s+1}{2}} \int_0^{t/2} \psi_t(\t) d\t, \ \ t\ge1,   \label{J1-est}
\end{aligned}
\end{equation}
where $\psi_t(\t)
:=\|\eta|^{s+1}e^{-\mu|\eta|^2}(\mathcal{F}[(u_k u)](\frac{\eta}{\sqrt{t-\t}},\t)
       -\mathcal{F}[(u_k u)](0, \t))\|_{L^{p'}_\eta}$.  
To show the estimate \eqref{est;J123} with $k=1$, we need to confirm that 
\begin{equation} \label{eqn;so1}
\lim_{t \to \infty} \int_0^{t/2} \psi_t(\t)d\t=0.  
\end{equation}
In what follows, let us prove this. For any fixed $M>0$, can we easily see $\lim_{t \to \infty} \int_0^M \psi_t(\t)d\t=0$. 
Actually, it follows from Plancherel's theorem and the fact 
$u \in L^\infty(\R_+;\wh{L}^1) \cap L^2(\R_+;\wh{L}^\infty)$ (see, \eqref{est;LpL1-re} and Proposition~\ref{thm;GWP}) that 
\begin{align*}
 \psi_t(\t)
& \les \left\|\int_{\R^d} \left(1-e^{-iy\cdot \frac{\eta}{\sqrt{t-\t}}}\right) u_k(\t,y)u(\t,y)dy\right\|_{L^\infty_\eta} \\
&\les \|u(\t)\|_{L^2}^2 \les \|u(\t)\|_{\wh{L}^1}\|u(\t)\|_{\wh{L}^\infty} \in L^1_\t(0,M). 
\end{align*}
In addition, from the definition of $\psi_t(\t)$, we immediately have $\lim_{t \to \infty}\psi_t(\t) = 0$ for a.e.\,$\t \in (0,M)$. 
Therefore, we obtain $\lim_{t \to \infty} \int_0^M \psi_t(\t)d\t=0$ by virtue of the dominated convergence theorem. 
Moreover, for all $\ve>0$, we can choose $M>0$ such that $\int_M^\infty \psi_t (\t) d\t<\ve$ under the condition $d\ge3$, because 
it follows from Hausdorff--Young's inequality and \eqref{est;LpL1-re} that 
$$
\int_M^\infty \psi_t(\t) d\t
\les \int_M^\infty \|u(\t)\|_{L^2}^2 d\t
\les \int_M^\infty \t^{-\frac{d}{2}} d\t. 
$$
Gathering all the above results, we thus obtain that for all $\ve>0$, 
$$
\limsup_{t\to \infty}\int_0^{t/2} \psi_t(\t) d\t 
\les \lim_{t\to \infty}\int_0^M \psi_t(\t) d\t + \ve
=\ve. 
$$
Therefore, we could confirm that \eqref{eqn;so1} holds true, because $\ve>0$ can be chosen arbitrary small. 
As a result, combining \eqref{J1-est} and \eqref{eqn;so1}, we are able to obtain 
\[
\limsup_{t\to \infty}t^{\frac{d}{2p'}+\frac{s+1}{2}}\||\N|^sJ_{1}(t)\|_{\wh{L}^p}
\les \lim_{t \to \infty} \int_0^{t/2} \psi_t(\t)d\t=0. 
\]
This means that the estimate \eqref{est;J123} holds for $k=1$. 

Next, let us treat $J_{2}(x, t)$. 
Applying the mean value theorem to $e^{-\mu t|\xi|^2}-e^{-\mu(t-\t)|\xi|^2}$ in the target integral, we can see that 
\begin{equation} \label{J2-est-1}
\begin{aligned}
\||\N|^sJ_{2}(t)\|_{\wh{L}^p}
&\les 
     \int_0^{t/2} 
     \left\||\xi|^{s} i\xi_k \left(\int_0^1 \mu \t |\xi|^2e^{-\mu(t-\theta \t)|\xi|^2}d\theta\right)
     P(\xi) \mathcal{F}[(u_k u)](0, \t) \right\|_{L^{p'}_\xi}
     d\t  \\
&\les \int_0^{t/2} 
      \int_0^1 \left\||\xi|^{s+3} \t e^{-\mu(t-\theta \t)|\xi|^2}
     \mathcal{F}[(u_k u)](0, \t) \right\|_{L^{p'}_\xi}
     d\theta d\t  \\
&\les \int_0^{t/2} 
      \int_0^1 
      \t (t-\theta \t)^{-\frac{d}{2p'}-\frac{s+3}{2}}  
      \left\|\mathcal{F}[(u_k u)](0, \t) \right\|_{L^{\infty}_\xi}
     d\theta d\t \\
&\les t^{-\frac{d}{2p'}-\frac{s+3}{2}} \int_0^{t/2}\t \|u(\t)\|_{L^2}^2 d\t. 
\end{aligned}
\end{equation}
Since Proposition~\ref{thm;GWP} and the decay estimate \eqref{est;LpL1-re} with $p=2$, we obtain that for all $t\ge1$, 
\begin{equation}
\begin{aligned}
\int_0^{t/2} \t \|u(\t)\|_{L^2}^2\,d\t
&\les \int_0^1 \t \|u(\t)\|_{L^2}^2\,d\t + \int_1^t \t \|u(\t)\|_{L^2}^2\,d\t \\
&\les \|u\|_{L^\infty(0,1;\wh{L}^1)} \int_0^1 \t \|u(\t)\|_{\wh{L}^\infty} d\t 
     + \int_1^t \t \cdot \t^{-\frac{d}{2}} d\t \\
&\les \|u\|_{L^\infty(0,1;\wh{L}^1)} \|u\|_{L^2(0,1;\wh{L}^\infty)}+\int_1^t \t^{1-\frac{d}{2}} d\t 
\les 1+
\begin{cases}
t^{\frac{1}{2}}, &d=3, \\
\log t, &d=4, \\
t^{2-\frac{d}{2}}, &d\ge5.  
\end{cases}\label{J2-est-2}
\end{aligned}
\end{equation} 
Therefore, for all $t\ge1$, it follows from \eqref{J2-est-1} and \eqref{J2-est-2} that 
\[
\||\N|^sJ_{2}(t)\|_{\wh{L}^p} \les t^{-\frac{d}{2p'}-\frac{s+3}{2}}+t^{-\frac{d}{2p'}-\frac{s+1}{2}}
\begin{cases}
t^{-\frac{1}{2}}, &d=3, \\
t^{-1}\log t, &d=4, \\
t^{-(\frac{d}{2}-1)}, &d\ge5.  
\end{cases} 
\]
Taking the limit in the above, we can say that the estimate \eqref{est;J123} with $k=2$ has been established. 

Finally, we shall deal with $J_{3}(x, t)$. By using the decay estimate \eqref{est;LpL1-re} with $p=2$ again, we can analogously evaluate it as follows: 
\begin{align*}
\||\N|^sJ_{3}(t)\|_{\wh{L}^p} 
&\les \left\||\xi|^{s+1}P(\xi)e^{-\mu t|\xi|^2}\right\|_{L^{p'}_\xi}
     \left|\int_{t/2}^\infty \mathcal{F}[(u_k u)](\t,0) d\t \right| \\
&\les t^{-\frac{d}{2p'}-\frac{s+1}{2}}\int_{t/2}^\infty \|u(\t)\|_{L^2}^2 d\t 
\les t^{-\frac{d}{2p'}-\frac{s+1}{2}}\int_{t/2}^\infty \t^{-\frac{d}{2}} d\t  \\
&\les t^{-\frac{d}{2p'}-\frac{s+1}{2}-(\frac{d}{2}-1)}, \ \ t\ge1. 
\end{align*}
Therefore, we obtain the estimate \eqref{est;J123} with $k=3$ in the same way as before. Gathering all the above results, we can conclude that \eqref{est;J123} holds for all $k=1, 2, 3$. 

Now, applying \eqref{est;J123} to \eqref{N1-split}, we eventually arrive at 
\[
\lim_{t\to \infty}t^{\frac{d}{2}(1-\frac{1}{p})+\frac{s+1}{2}}\left\||\N|^{s}\left(\mathcal{N}_{1}[u](t)-\p_{k}S_{\mu}(t)\int_{0}^{\infty}\int_{\R^{d}}(u_{k}u)(x, t)dxdt\right)\right\|_{\wh{L}^{p}}=0. 
\] 
Combining this result, \eqref{est;high-2} and \eqref{N-split}, we thus complete the proof of the desired formula \eqref{Duhamel-asymp-high}. 
\end{proof}

\begin{proof}[\rm{\bf{End of the Proof of Theorem~\ref{thm;2nd-high}}}]
First, we note that \eqref{IE} and \eqref{DEF-N} lead to 
\begin{align*}
&u(t)-S_{\mu}(t)M_{0}+\p_{k}S_{\mu}(t)\int_{0}^{\infty}\int_{\R^{d}}(u_{k}u)(x, t)dxdt\\
&=\left(e^{\mu t\Delta}u_{0}-S_{\mu}(t)M_{0}\right)-\left(\mathcal{N}[u](x, t)-\p_{k}S_{\mu}(t)\int_{0}^{\infty}\int_{\R^{d}}(u_{k}u)(x, t)dxdt\right).
\end{align*}
Therefore, in addition to the assumptions in Theorem~\ref{thm;1st-2}, if we additionally assume $\al>1$, the desired asymptotic formula \eqref{2nd-1} can be immediately obtained, by virtue of the above equation, Propositions~\ref{prop-lin-half1-2} and \ref{prop-N-asymp-high}. This completes the proof of Theorem~\ref{thm;2nd-high}. 
\end{proof}

At the end of this paper, we would like to deal with the two-dimensional case of $d=2$, i.e., we shall give the proof of Theorem~\ref{thm;2nd-2}. 
The following proposition is one of the main contributions of this paper.
\begin{prop}[Asymptotic Profile for the Duhamel Term, Two-dimensional Case]\label{prop-N-asymp-2}
Let $d=2$. Suppose that the all conditions as in Theorem~\ref{thm;1st-2} are satisfied. 
If we additionally assume $\al\ge1$, for $\mathcal{N}[u](x, t)$ defined by \eqref{DEF-N}, we have the following asymptotic formula: 
\begin{equation}\label{Duhamel-asymp-two}
\left\||\N|^{s}\left(\mathcal{N}[u](t)-(\log t)\,\p_{k}S_{\mu}(t)\mathcal{A}_{k}[M_{0}]\right)\right\|_{\wh{L}^{p}}
\les t^{-\frac{d}{2}(1-\frac{1}{p})-\frac{s+1}{2}}, \ \ t\ge2, 
\end{equation}
where $\mathcal{A}_{k}[M_{0}]$ is defined by \eqref{DEF-A}.  
\end{prop}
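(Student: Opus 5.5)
The plan is to follow the decomposition used for Proposition~\ref{prop-N-asymp-high}, with one change: instead of subtracting the full moment $\int_0^\infty\int_{\R^2}(u_ku)\,dx\,d\t$, which diverges when $d=2$, we subtract only the partial moment over $(0,t/2)$. The logarithm will then come out of that partial moment, once $u(\t)$ is replaced by its first-order profile $S_\mu(\t)M_0$ from Theorem~\ref{thm;1st-2}. Throughout, $p'$ is the conjugate exponent and $t\ge2$.

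\emph{Step 1 (splitting).} Write $\mathcal{N}[u]=\mathcal{N}_1[u]+\mathcal{N}_2[u]$ as in \eqref{N-split}. For $d=2$ the bound \eqref{est;high} gives $\||\N|^s\mathcal{N}_2[u](t)\|_{\wh{L}^p}\les t^{-\frac{d}{2p'}-\frac{s+1}{2}}$, which is already the target rate. For $\mathcal{N}_1[u]$, repeat the computation \eqref{N1-split} with the upper limit $\infty$ replaced by $t/2$. This gives
\[
\mathcal{N}_1[u](t)=J_1(t)+J_2(t)+\p_kS_\mu(t)\int_0^{t/2}\!\!\int_{\R^2}(u_ku)(x,\t)\,dx\,d\t ,
\]
where $J_1$ and $J_2$ are exactly as in \eqref{N1-split}.

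\emph{Step 2 ($J_2$).} We use \eqref{J2-est-1} and \eqref{est;LpL1-re} with $p=2$. Since $\int_0^{t/2}\t\|u(\t)\|_{L^2}^2\,d\t\les 1+\int_1^t d\t\les t$, we get $\||\N|^sJ_2\|_{\wh{L}^p}\les t^{-\frac{d}{2p'}-\frac{s+3}{2}}\cdot t$, which is the required rate.

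\emph{Step 3 ($J_1$, the main obstacle).} The dominated-convergence argument used for $d\ge3$ only gives $o(1)$, and $\|u(\t)\|_{L^2}^2\sim\t^{-1}$ is not integrable, so we need a quantitative gain. Set $w:=u-S_\mu(\cdot)M_0$. By Theorem~\ref{thm;1st-2} with $p=2$, $s=0$, $\al\ge1$, we have $\|w(\t)\|_{L^2}\les \t^{-1}\log(1+\t)$ for $\t\ge1$. Split
\[
u_ku=(S_\mu(\t)M_0)_k\,(S_\mu(\t)M_0)+(\text{terms containing }w).
\]
\begin{itemize}
\item For the $w$-terms, bound the difference $\mathcal{F}[\cdot](\xi)-\mathcal{F}[\cdot](0)$ crudely by $\|w\|_{L^2}\|u\|_{L^2}\les\t^{-3/2}\log(1+\t)$. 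This is integrable on $(1,\infty)$, while $(0,1)$ is handled as in \eqref{N1-est}.
\item For the pure Oseen term, use $|e^{-iy\cdot\xi}-1|\le|y||\xi|$ together with Lemma~\ref{lem;S}. This gives $|\mathcal{F}[\cdot](\xi)-\mathcal{F}[\cdot](0)|\les|\xi|\int|y||S_\mu(y,\t)|^2dy\les|\xi|\t^{-1/2}$. After the scaling in \eqref{J1-est} this produces $|\eta|(t-\t)^{-1/2}\t^{-1/2}$, and $\int_1^{t/2}(t-\t)^{-1/2}\t^{-1/2}\,d\t\les1$.
\end{itemize}
Hence $\int_0^{t/2}\psi_t(\t)\,d\t\les1$, and so $\||\N|^sJ_1\|_{\wh{L}^p}\les t^{-\frac{d}{2p'}-\frac{s+1}{2}}$. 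Here I assume $\|\,|\eta|^{s+2}e^{-\mu|\eta|^2}\|_{L^{p'}}<\infty$, which holds under (C).

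\emph{Step 4 (the logarithm and the constant).} By the same splitting of $u_ku$, and since the $w$-terms contribute $O(1)$ to the time integral,
\[
\int_0^{t/2}\!\!\int(u_ku)\,dx\,d\t=\int_1^{t/2}\!\!\int (S_\mu(\t)M_0)_k\, S_\mu(\t)M_0\,dx\,d\t+O(1).
\]
Scaling gives $\int (S_\mu(\t)M_0)_kS_\mu(\t)M_0\,dx=\t^{-1}\mathcal{A}_k$, with $\mathcal{A}_k:=\int (S_\mu(1)M_0)_kS_\mu(1)M_0\,dx$. To identify $\mathcal{A}_k$, apply Plancherel and polar coordinates $\xi=r\omega$:
\[
\mathcal{A}_k=\int_0^\infty re^{-2\mu r^2}dr\int_{S^1}(M_0^k-\omega_k\,\omega\!\cdot\! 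M_0)(M_0-\omega\,\omega\!\cdot\! M_0)\,d\omega .
\]
Use the circle averages $\int\omega_i\omega_j=\pi\delta_{ij}$ and $\int\omega_i\omega_j\omega_l\omega_m=\frac{\pi}{4}(\delta_{ij}\delta_{lm}+\delta_{il}\delta_{jm}+\delta_{im}\delta_{jl})$. The angular integral equals $\frac{\pi}{4}(2M_0^kM_0+|M_0|^2e_k)$, and the radial integral is $1/(4\mu)$, so $\mathcal{A}_k=\mathcal{A}_k[M_0]$ as in \eqref{DEF-A}. The time integral therefore equals $(\log t)\mathcal{A}_k[M_0]+O(1)$, using $\log(t/2)=\log t+O(1)$. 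Finally, $\||\N|^s\p_kS_\mu(t)\|_{\wh{L}^p}\les t^{-\frac{d}{2p'}-\frac{s+1}{2}}$ absorbs the $O(1)$ remainder. Collecting Steps 1–4 yields \eqref{Duhamel-asymp-two}.
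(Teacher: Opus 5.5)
Your proposal is correct and uses the same ingredients as the paper. These are: Theorem~\ref{thm;1st-2}, to replace $u_ku$ by $(S_\mu M_0)_kS_\mu M_0$ on $(1,t/2)$ with an integrable $O(\tau^{-3/2}\log(1+\tau))$ error; Lemma~\ref{lem;S}, for the $\xi$-increment of the Oseen product; the mean-value bound for the heat-kernel difference; \eqref{est;high} on $(t/2,t)$; and the circle-moment computation giving $\tau^{-1}\mathcal{A}_k[M_0]$.

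The difference is only in bookkeeping. The paper subtracts the Oseen product first, as the term $R_3$, which it bounds directly in $\widehat{L}^1$ without any $\xi$-increment; it then splits only the explicit part $L=W+Z$. You instead keep the $d\ge 3$ template with the moment truncated at $t/2$. As a result, you handle the $w$-terms twice (in $J_1$ and in the moment) and run the $J_2$ bound on all of $u_ku$. This costs nothing, since $\int_0^{t/2}\tau\|u(\tau)\|_{L^2}^2\,d\tau\lesssim t$.
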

\begin{proof}
First, for $t\ge2$, let us decompose the Duhamel term in the integral equation \eqref{IE} as follows: 
\begin{equation}
\begin{aligned}
&\mathcal{F}\left[\int_0^t e^{\mu(t-\t)\Delta} \mathcal{P}_\s \div (u \otimes u)(\t) \,d\t\right](\xi) \\
&=\left(\int_0^{1}+\int_1^{t/2}+\int_{t/2}^{t}\right) 
    i \xi_k e^{-\mu (t-\t)|\xi|^2} P(\xi) (2\pi)^{\frac{d}{2}}\mathcal{F}[(u_k u)](\xi, \t)d\t  \\ 
&=\int_0^{1}i \xi_k e^{-\mu (t-\t)|\xi|^2} P(\xi) (2\pi)^{\frac{d}{2}}\mathcal{F}[(u_k u)](\xi, \t)d\t \\
     &\ \ \ \,+\int_{t/2}^{t}i \xi_k e^{-\mu (t-\t)|\xi|^2} P(\xi) (2\pi)^{\frac{d}{2}}\mathcal{F}[(u_k u)](\xi, \t)d\t  \\  
&\ \ \ \,+\int_1^{t/2}i \xi_k e^{-\mu (t-\t)|\xi|^2} P(\xi) (2\pi)^{\frac{d}{2}}\mathcal{F}\left[\left(u_k u-\left(S_{\mu}M_{0}\right)_{k}S_{\mu}M_{0}\right)\right](\xi, \t)d\t     \\
&\ \ \ \,+\int_1^{t/2}i \xi_k e^{-\mu (t-\t)|\xi|^2} P(\xi) (2\pi)^{\frac{d}{2}}\mathcal{F}\left[\left(\left(S_{\mu}M_{0}\right)_{k}S_{\mu}M_{0}\right)\right](\xi, \t)d\t   \\
&=:\wh{R}_{1}(\xi, t)+\wh{R}_{2}(\xi, t)+\wh{R}_{3}(\xi, t)+\wh{L}(\xi, t). \label{DEF-RL} 
\end{aligned}
\end{equation}
In what follows, $\wh{R}_{i}(\xi, t)$ $(i=1, 2, 3)$ are considered as remainder terms. We can obtain the leading of the Duhamel term from $\wh{L}(\xi, t)$. 
To do that, we shall further divide it into the following form: 
\begin{equation}
\begin{aligned}
\wh{L}(\xi, t)=&\int_1^{t/2}i \xi_k e^{-\mu (t-\t)|\xi|^2} P(\xi) (2\pi)^{\frac{d}{2}}\mathcal{F}\left[\left(\left(S_{\mu}M_{0}\right)_{k}S_{\mu}M_{0}\right)\right](0, \t)d\t \\
&\begin{aligned}
+\int_1^{t/2}& i \xi_k e^{-\mu (t-\t)|\xi|^2} P(\xi) (2\pi)^{\frac{d}{2}} \\
&\times \left\{\mathcal{F}\left[\left(S_{\mu}M_{0}\right)_{k}S_{\mu}M_{0}\right](\xi, \t) -\mathcal{F}\left[\left(S_{\mu}M_{0}\right)_{k}S_{\mu}M_{0}\right](0, \t) \right\}d\t  
\end{aligned}
\\
=:& \,\wh{W}(\xi, t)+\wh{Z}(\xi, t).    \label{DEF-WZ}
\end{aligned}
\end{equation}
Next, for the latter sake, we would like to show the equality 
\begin{equation}\label{int-A}
\int_{\R^{2}}\left(\left(S_{\mu}M_{0}\right)_{k}S_{\mu}M_{0}\right)(x, \tau)dx
=\frac{\pi}{16\mu \tau}\left(2M_{0}^{k}M_{0}+|M_{0}|^{2}e_{k}\right)
=\tau^{-1}\mathcal{A}_{k}[M_{0}], 
\end{equation}
where $\mathcal{A}_{k}[M_{0}]$ is defined by \eqref{DEF-A}. Now, using the definition of $S_{\mu}(x, t)$ in \eqref{DEF-Stokes}, Parseval's identity and the polar coordinates $\xi = r \omega$ with $\omega=(\omega_{1}, \omega_{2}) \in S^1$, we have
\begin{equation}
\begin{aligned}
\int_{\mathbb{R}^2}
\left(\left(S_\mu M_{0}\right)_k \left(S_\mu M_{0}\right)_j\right)(x,\tau)dx
&=
\int_{\mathbb{R}^2}
e^{-2\mu \t |\xi|^2}
\left(P(\xi)M_{0}\right)_k \left(P(\xi)M_{0}\right)_j d\xi \\
&=\int_0^\infty e^{-2\mu \t r^2} r\,dr
\int_{S^1}\left(P(\omega)M_{0}\right)_k \left(P(\omega)M_{0}\right)_j d\sigma(\omega) \\
&=\frac{1}{4\mu \tau}\int_{S^1}\left(P(\omega)M_{0}\right)_k \left(P(\omega)M_{0}\right)_j d\sigma(\omega)=:\frac{1}{4\mu \tau} I_{kj}. \label{DEF-Ikj}
\end{aligned}
\end{equation}
Then, we need to calculate $I_{kj}$ for $k, j=1, 2$. Here, it follows from $P(\omega)=I-\omega \otimes \omega$ that 
\[
\left(P(\omega)M_{0}\right)_k=M_{0}^{k}-\omega_{k}(\omega \cdot M_{0}). 
\]
Therefore, a direct calculation yields that 
\begin{equation}
\begin{aligned}
I_{kj}&=\int_{S^1}\left\{M_{0}^{k}-\omega_{k}(\omega \cdot M_{0})\right\}\left\{M_{0}^{j}-\omega_{j}(\omega \cdot M_{0})\right\}d\sigma(\omega)  \\
&=\int_{S^1}\left\{ M_{0}^{k}M_{0}^{j}-M_{0}^{k}\omega_{j}(\omega \cdot M_{0})-M_{0}^{j}\omega_{k}(\omega \cdot M_{0}) +\omega_{k}\omega_{j}(\omega \cdot M_{0})^{2}\right\}d\sigma(\omega)  \\
&=2\pi M_{0}^{k}M_{0}^{j}
-M_{0}^{k}\sum_{l=1}^{2}M_{0}^{l}\int_{S^1}\omega_{j}\omega_{l}d\sigma(\omega)
-M_{0}^{j}\sum_{l=1}^{2}M_{0}^{l}\int_{S^1}\omega_{k}\omega_{l}d\sigma(\omega) \\
&\ \ \ \,+\sum_{l, m=1}^{2}M_{0}^{l}M_{0}^{m}\int_{S^{1}}\omega_{k}\omega_{j}\omega_{l}\omega_{m}d\sigma(\omega) \\
&
\begin{aligned}
=&\, 2\pi M_{0}^{k}M_{0}^{j}-M_{0}^{k}\sum_{l=1}^{2}M_{0}^{l}\pi \delta_{jl}-M_{0}^{j}\sum_{l=1}^{2}M_{0}^{l}\pi \delta_{kl} \\
&+\frac{\pi}{4}\sum_{l, m=1}^{2}M_{0}^{l}M_{0}^{m}\left(\delta_{kj}\delta_{lm}+\delta_{kl}\delta_{jm}+\delta_{km}\delta_{jl} \right)
\end{aligned}\\
&=2\pi M_{0}^{k}M_{0}^{j}-\pi M_{0}^{k}M_{0}^{j}-\pi M_{0}^{j}M_{0}^{k}+\frac{\pi}{4}\left(\delta_{kj}|M_{0}|^{2}+M_{0}^{k}M_{0}^{j}+M_{0}^{j}M_{0}^{k}\right) \\
&=\frac{\pi}{4}\left(2M_{0}^{k}M_{0}+\delta_{kj}|M_{0}|^{2}\right). \label{int-kakudo}
\end{aligned}
\end{equation}
Combining \eqref{DEF-Ikj} and \eqref{int-kakudo}, and rewriting it into the vector form, we obtain \eqref{int-A}. 
By virtue of \eqref{int-A}, the leading term of the Duhamel term can be derived from $\wh{W}(\xi, t)$ in \eqref{DEF-WZ} as follows: 
\begin{equation}
\begin{aligned}
\wh{W}(\xi, t)=&\int_1^{t/2}i \xi_k e^{-\mu (t-\t)|\xi|^2} P(\xi) \left(\int_{\R^{2}}\left(\left(S_{\mu}M_{0}\right)_{k}S_{\mu}M_{0}\right)(x, \tau)dx\right)d\t  \\
=&\int_1^{t/2}i \xi_k e^{-\mu (t-\t)|\xi|^2} P(\xi) \tau^{-1}\mathcal{A}_{k}[M_{0}]d\tau  \\
=&\left(\int_{1}^{t/2}\tau^{-1}d\tau\right) \wh{\p_{k}S_{\mu}}(\xi, t)\mathcal{A}_{k}[M_{0}] \\
&+\int_1^{t/2}i \xi_k \left\{e^{-\mu (t-\t)|\xi|^2}-e^{-\mu t|\xi|^2} \right\}P(\xi) \tau^{-1}\mathcal{A}_{k}[M_{0}] d\tau \\
=&\, (\log t)\,\wh{\p_{k}S_{\mu}}(\xi, t)\mathcal{A}_{k}[M_{0}] 
   + \left(\log \frac{1}{2}\right)\wh{\p_{k}S_{\mu}}(\xi, t)\mathcal{A}_{k} [M_{0}] \\
   &+\int_1^{t/2}i \xi_k \left\{e^{-\mu (t-\t)|\xi|^2}-e^{-\mu t|\xi|^2} \right\}P(\xi) \tau^{-1}\mathcal{A}_{k}[M_{0}] d\tau  \\
=:&\, (\log t)\,\wh{\p_{k}S_{\mu}}(\xi, t)\mathcal{A}_{k}[M_{0}]+\wh{W}_{1}(\xi, t)+\wh{W}_{2}(\xi, t). \label{W-split}
\end{aligned}
\end{equation}

Gathering the above results \eqref{DEF-N}, \eqref{DEF-RL}, \eqref{DEF-WZ} and \eqref{W-split}, we eventually arrive at 
\begin{equation}
\begin{aligned}\label{d=2-final}
&\mathcal{F}\Big[\mathcal{N}[u](t)-(\log t)\,\p_{k}S_{\mu}(t)\mathcal{A}_{k}[M_{0}]\Big](\xi) \\
&=\wh{R}_{1}(\xi, t)+\wh{R}_{2}(\xi, t)+\wh{R}_{3}(\xi, t)+\wh{W}_{1}(\xi, t)+\wh{W}_{2}(\xi, t)+\wh{Z}(\xi, t). 
\end{aligned}
\end{equation}
Therefore, in order to complete the proof, we need to evaluate the all remainder terms in the right-hand side of the above. 
However, noticing that the estimates for $\wh{R}_{1}(\xi, t)$ and $\wh{R}_{2}(\xi, t)$ have already been obtained in the previous discussion. 
Actually, we are able to evaluate $\wh{R}_{1}(\xi, t)$ in completely the same way to get \eqref{N1-est}. More precisely, we have 
\begin{align}
\left\||\N|^{s}R_{1}(t)\right\|_{\wh{L}^{p}}
\les& t^{-\frac{d}{2p'}-\frac{s+1}{2}}, \ \ t\ge2.  \label{R1-est}
\end{align}
Moreover, it immediately follows from \eqref{est;high} that 
\begin{align}\label{R2-est}
\left\||\N|^{s}R_{2}(t)\right\|_{\wh{L}^{p}} \les t^{-\frac{d}{2p'} -\frac{s+1}{2}-(\frac{d}{2}-1)}, \ \ t\ge2, 
\end{align}
because $R_{2}(x, t)\equiv \mathcal{N}_{2}[u](x, t)$ holds from their definitions \eqref{DEF-RL} and \eqref{N-split}. 
Thus, it is sufficient to evaluate $\wh{R}_{3}(\xi, t)$, $\wh{W}_{1}(\xi, t)$, $\wh{W}_{2}(\xi, t)$, $\wh{Z}(\xi, t)$ in what follows. 

Now, we shall derive the estimate for $\wh{R}_{3}(\xi, t)$. First, let us take $0<\delta<1/2$ arbitrary small. Then, we note that $\log(1+\tau) \les \t^{\delta}$ holds for all $\tau \ge1$. Therefore, if $\al \ge1$, it follows from Hausdorff--Young's inequality, Theorem~\ref{thm;1st-2}, \eqref{est;LpL1-re} and \eqref{DEF-Stokes} that 
\begin{align*}
&\left\|\left(u_k u-\left(S_{\mu}M_{0}\right)_{k}S_{\mu}M_{0}\right)(\tau)\right\|_{\wh{L}^{1}} \\
&\les \left\|\left\{\left(u-S_{\mu}M_{0}\right)_{k}u\right\}(\tau)\right\|_{\wh{L}^{1}} 
+\left\|\left\{\left(S_{\mu}M_{0}\right)_{k}  \left(u-S_{\mu}M_{0}\right)\right\}(\tau)\right\|_{\wh{L}^{1}} \\
&\les \left\|u(\tau)-S_{\mu}(\tau)M_{0} \right\|_{\wh{L}^{\infty}} \left\{ \left\|u(\tau)\right\|_{\wh{L}^{1}}+|M_{0}|\left\|S_{\mu}(\tau)\right\|_{\wh{L}^{1}}\right\} \\
&\les \t^{-1}\left\{\t^{-\frac{\al}{2}}+\t^{-\frac{1}{2}}\log (1+\t)\right\}
\les \t^{-\frac{3}{2}+\delta}, \ \ \t \ge1. 
\end{align*}
Using this estimate, analogously as \eqref{N1-est}, we are able to see that 
\begin{equation}
\begin{aligned}
\left\||\N|^{s}R_{3}(t)\right\|_{\wh{L}^{p}} 
&\les \int_1^{t/2}\<t-\t\>^{-\frac{d}{2p'}-\frac{s+1}{2}}\left\|\left(u_k u-\left(S_{\mu}M_{0}\right)_{k}S_{\mu}M_{0}\right)(\tau)\right\|_{\wh{L}^{1}} d\tau \\
&\les t^{-\frac{d}{2p'}-\frac{s+1}{2}}\int_{1}^{t/2}\t^{-\frac{3}{2}+\delta}d\tau 
\les t^{-\frac{d}{2p'}-\frac{s+1}{2}}, \ \ t\ge2. \label{R3-est}
\end{aligned}
\end{equation}

Next, we would like to deal with $\wh{W}_{1}(\xi, t)$ and $\wh{W}_{2}(\xi, t)$. For $\wh{W}_{1}(\xi, t)$, it is easy to see that 
\begin{align}\label{W1-est}
\left\||\N|^{s}W_{1}(t)\right\|_{\wh{L}^{p}} 
\les \left|\mathcal{A}_{k}[M_{0}] \right|\left\||\N|^{s}\p_{k}S_{\mu}(t)\right\|_{\wh{L}^{p}} 
\les \left|M_{0}\right|^{2} t^{-\frac{d}{2p'} -\frac{s+1}{2}}, \ \ t\ge2. 
\end{align}
On the other hand, in the similar way to get \eqref{J2-est-1}, we can evaluate $\wh{W}_{2}(\xi, t)$ as follows: 
\begin{equation} 
\begin{aligned}
\||\N|^sW_{2}(t)\|_{\wh{L}^p}
&\les 
     \int_0^{t/2} 
     \left\||\xi|^{s} i\xi_k \left(\int_0^1 \mu \t |\xi|^2e^{-\mu(t-\theta \t)|\xi|^2}d\theta\right)
     P(\xi) \tau^{-1}\mathcal{A}_{k}[M_{0}] \right\|_{L^{p'}_\xi}
     d\t  \\
&\les \left|\mathcal{A}_{k}[M_{0}] \right|\int_0^{t/2} 
      \int_0^1 \left\||\xi|^{s+3} e^{-\mu(t-\theta \t)|\xi|^2}\right\|_{L^{p'}_\xi}
     d\theta d\t  \\
&\les \left|M_{0}\right|^{2} \int_0^{t/2} 
      \int_0^1 
      (t-\theta \t)^{-\frac{d}{2p'}-\frac{s+3}{2}}  
     d\theta d\t 
     \les \left|M_{0}\right|^{2} t^{-\frac{d}{2p'} -\frac{s+1}{2}}, \ \ t\ge2. \label{W2-est}
\end{aligned}
\end{equation}

Finally, let us treat $\wh{Z}(\xi, t)$. In order to do that, we need to prepare an estimate below. Recalling the weighted estimate for the kernel $S_{\mu}(x, t)$, i.e., Lemma~\ref{lem;S}, we have 
\begin{align*}
&\left|(2\pi)^{\frac{d}{2}}\left\{\mathcal{F}\left[\left(S_{\mu}M_{0}\right)_{k}S_{\mu}M_{0}\right](\xi, \t) -\mathcal{F}\left[\left(S_{\mu}M_{0}\right)_{k}S_{\mu}M_{0}\right](0, \t) \right\}\right| \\
&=\left| \int_{\R^{2}}\left(e^{-ix\cdot \xi}-1\right)\left(\left(S_{\mu}M_{0}\right)_{k}S_{\mu}M_{0}\right)(x, \tau)dx\right| \\
&=\left| \int_{\R^{2}}\left(\int_{0}^{1}e^{-i\theta x\cdot \xi}d\theta \right)ix\cdot \xi \left(\left(S_{\mu}M_{0}\right)_{k}S_{\mu}M_{0}\right)(x, \tau)dx\right| \\
&\les \left|M_{0}\right|^{2} |\xi| \int_{\R^{2}}|x|\left|S_{\mu}(x, \tau)\right|^{2}dx 
\les \left|M_{0}\right|^{2}|\xi| \tau^{-\frac{1}{2}}, \ \ \xi \in \R^{d}, \ \t>0. 
\end{align*}
Therefore, a direct calculation gives us that 
\begin{equation}
\begin{aligned}
\left\||\N|^{s}Z(t)\right\|_{\wh{L}^{p}}
&\les \left|M_{0}\right|^{2}\int_{1}^{t/2}\left\||\xi|^{s+2}e^{-\mu(t-\tau)|\xi|^{2}}\right\|_{L^{p'}_{\xi}}\tau^{-\frac{1}{2}}d\tau  \\
&\les \left|M_{0}\right|^{2}\int_{1}^{t/2}(t-\tau)^{-\frac{d}{2p'}-\frac{s+2}{2}}\tau^{-\frac{1}{2}}d\tau 
\les  \left|M_{0}\right|^{2}t^{-\frac{d}{2p'}-\frac{s+1}{2}}, \ \ t\ge2. \label{Z-est}
\end{aligned}
\end{equation}

Eventually, summarizing the all results from \eqref{d=2-final} through \eqref{Z-est}, we are able to conclude that the desired asymptotic formula \eqref{Duhamel-asymp-two} holds true. This completes the proof. 
\end{proof}

\begin{proof}[\rm{\bf{End of the Proof of Theorem~\ref{thm;2nd-2}}}]
It follows from \eqref{IE} and \eqref{DEF-N} that 
\begin{align*}
&u(t)-S_{\mu}(t)M_{0}+(\log t)\,\p_{k}S_{\mu}(t)\mathcal{A}_{k}[M_{0}]\\
&=\left(e^{\mu t\Delta}u_{0}-S_{\mu}(t)M_{0}\right)-\left\{\mathcal{N}[u](x, t)-(\log t)\,\p_{k}S_{\mu}(t)\mathcal{A}_{k}[M_{0}]\right\}.
\end{align*}
As a result, under the assumptions in Theorem~\ref{thm;1st-2} and the additional condition $\al \ge1$, we are able to derive the desired asymptotic formula \eqref{2nd-2}, by using the above equation, Propositions~\ref{prop-lin-half1-2} and \ref{prop-N-asymp-2}. This completes the proof of Theorem~\ref{thm;2nd-2}. 
\end{proof}

\section*{Acknowledgments}

\indent

This study is supported by Grant-in-Aid for Young Scientists Research No.22K13936 and No.22K13939, Japan Society for the Promotion of Science.


\section*{Declaration of Generative AI and AI-assisted Technologies}

\indent

During the preparation of this work, the authors partially used ChatGPT (OpenAI) for language editing and for assistance with some routine calculations and computational checks. All mathematical arguments, calculations and results were independently verified by the authors, who take full responsibility for the content of the manuscript.




\medskip
\par\noindent
\begin{flushleft}
Ikki Fukuda\\
Division of Mathematics and Physics, \\
Faculty of Engineering, \\
Shinshu University, \\
4-17-1, Wakasato, Nagano, 380-8553, JAPAN\\
E-mail: i\_fukuda@shinshu-u.ac.jp

\bigskip
Ryosuke Nakasato\\
Division of Mathematics and Physics, \\
Faculty of Engineering, \\
Shinshu University, \\
4-17-1, Wakasato, Nagano, 380-8553, JAPAN\\
E-mail: nakasato@shinshu-u.ac.jp
\end{flushleft}

\end{document}